\newcommand{\im}{\mathrm{Im}\,}
\newtheorem{Def}{Definition}[section]
\newtheorem{Thm}{Theorem}[section]
\newtheorem{Lem}{Lemma}[section]
\newtheorem{Ass}{Assumption}[section]
\newtheorem{Cor}{Corollary}[section]
\newtheorem{Rem}{Remark}[section]
\newtheorem{Hypo}{Hypothesis}[section]
\newcommand{\beq}{\begin{align}}
\newcommand{\eeq}{\end{align}}
\newcommand{\supp}{\mathrm{supp\,}}
\newcommand{\Expect}[1]{\left\langle{#1}\right\rangle}
\newcommand{\no}{\nonumber}
\newcommand{\Normal}[1]{ : \hspace{-0.5mm} {#1} \hspace{-0.6mm} : }
\newcommand{\Natural}{\mathbb{N}}
\newcommand{\Real}{\mathbb{R}}
\newcommand{\Complex}{\mathbb{C}}
\newcommand{\sgn}{\mathrm{sgn}\,}
\newcommand{\norm}[1]{\left\| #1 \right\|} 
\newcommand{\e}{\varepsilon}
\newcommand{\q}{\quad}
\renewcommand{\H}{\mathcal{H}}
\newcommand{\D}{\mathcal{D}}
\newcommand{\F}{\mathcal{F}}
\newcommand{\E}{\mathcal{E}}
\DeclareMathOperator*{\op}{\oplus}
\DeclareMathOperator*{\ot}{\otimes}
\DeclareMathOperator*{\hot}{\hat{\otimes}}
\DeclareMathOperator*{\wg}{\wedge}
\newcommand{\R}{\mathbb{R}}
\newcommand{\C}{\mathbb{C}}
\newcommand{\N}{\mathbb{N}}
\newcommand{\B}{\mathcal{B}}
\newcommand{\kk}{{\boldsymbol{k}}}
\newcommand{\pp}{{\boldsymbol{p}}}
\newcommand{\xx}{{\boldsymbol{x}}}
\newcommand{\yy}{{\boldsymbol{y}}}
\newcommand{\bb}{\mathrm{b}}
\newcommand{\ff}{\mathrm{f}}
\newcommand{\ee}{\mathbf{e}}
\newcommand{\dd}{\mathrm{d}}
\newcommand{\Ga}{\Gamma}
\newcommand{\La}{\Lambda}
\newcommand{\la}{\lambda}
\renewcommand{\rm}{\mathrm}
\renewcommand{\hat}{\widehat}
\newcommand{\fin}{\mathrm{fin}}
\newcommand{\chisp}{\chi_\mathrm{sp}}
\newcommand{\chiel}{\chi_\mathrm{el}}
\newcommand{\chiph}{\chi_\mathrm{ph}}
\newcommand{\pa}{\partial}
\title{Time-ordered exponential on the complex plane and Gell-Mann -- Low formula 
as a mathematical theorem}
\author{Shinichiro Futakuchi and Kouta Usui}
\begin{document}
\maketitle
\abstract{The time-ordered exponential representation of a complex time evolution operator in the interaction picture is studied. Using the complex time evolution, we prove the Gell-Mann -- Low formula under certain abstract conditions,
in mathematically rigorous manner. 
We apply the abstract results to quantum electrodynamics with cutoffs.}

\section{Introduction}

In this paper, we consider a formula in quantum field theories of the type
\begin{align}
\Expect{\Omega , T \big\{ \phi ^{(1)} (x_1) \cdots \phi ^{(n)} (x_n) \big\} \Omega } 
& = \lim _{ t\to \infty } \frac{ \Expect{\Omega _0 , T \big\{ \phi _\rm{I} ^{(1)} (x_1) \cdots \phi ^{(n)} _\rm{I} (x_n) \rm{exp} \big[ -i \int _{-t} ^t d\tau H_1 (\tau ) \big] \big\} \Omega _0 } }{\Expect{ \Omega _0 , T \big\{ \rm{exp} \big[ -i \int _{-t} ^t d\tau H_1 (\tau ) \big] \big\} \Omega _0 }} , \label{introGML}
\end{align}
called the \textit{Gell-Mann -- Low formula} \cite{MR0044395}. 
The meaning of each symbol in the formula \eqref{introGML} is as follows: the symbol $ \Expect{\cdot \, , \cdot } $ denotes the inner product of a Hilbert space of quantum state vectors, 
$ \phi ^{(k)} (x_k) $ and $ \phi ^{(k)} _\rm{I} (x_k) \; (k=1,...,n, \; x_k \in \R ^4) $ denote field operators
 in the Heisenberg and the interaction picture, respectively. For instance, in quantum electrodynamics (QED), 
 each $ \phi ^{(k)} $ denotes the Dirac field $ \psi _l $, its conjugate $ \psi _l ^\dagger $, or the gauge field $ A_\mu $.
 The symbol $ T $ denotes the time-ordering and $ \Omega $ and $ \Omega _0 $ the vacuum states of the interacting and the free theory, respectively. The operator
 \[  T \big\{ \rm{exp} \big[ -i \int _{-t} ^t d\tau H_1 (\tau ) \big] \big\} \]
is the time evolution operator in the interaction picture, having the following series expansion:
\begin{align}
& T \big\{ \rm{exp} \big[ -i \int _{-t} ^t d\tau H_1 (\tau ) \big] \big\} \no \\
& \q = 1+ (-i) \int _{-t} ^t d\tau _1 \, H_1 (\tau _1) + (-i) ^2 \int _{-t} ^t d\tau _1 \int _{-t} ^{\tau _1}  d\tau _2 \, H_1 (\tau _1) H_1 (\tau _2) + \cdots ,
\end{align}
which is often called the \textit{time-ordered exponential} or the \textit{Dyson series} for $ H_1 (\tau ) := e^{i\tau H_0} H_1 e^{-i\tau H_0} \; (\tau \in \R ) $, where $ H_0 $ and $ H_1 $ are the free and the interaction Hamiltonians.

This formula is a fundamental tool to generate a perturbative expansion 
of the \textit{$ n $-point correlation function} 
\[  \Expect{\Omega , T \big\{ \phi ^{(1)} (x_1) \cdots \phi ^{(n)} (x_n) \big\} \Omega } \]
with respect to the coupling constant. When the coupling is small enough (for QED, this seems valid), 
the first few terms of the perturbation series is expected to be a
good approximation of the correlation function which gives quantitative predictions for 
 observable variables such as scattering cross section. In QED, these predictions agree with experimental results to
 eight significant figures, the most accurate predictions in all of natural
 science. However, the mathematical derivation of \eqref{introGML} is far from trivial and 
 proofs given in physics literatures are very heuristic and informal. In fact, 
 even the Hamiltonian is not easily given a mathematical meaning. 
The purpose of the present paper is to construct a mathematically rigorous setup in which 
the Gell-Mann -- Low formula \eqref{introGML} is adequately formulated and proved. 

Even when the $ n $-point correlation function 
\[  \Expect{\Omega , T \big\{ \phi ^{(1)} (x_1) \cdots \phi ^{(n)} (x_n) \big\} \Omega } \]
does not mathematically make sense, we can formally compute (after a renormalization procedure) 
this quantity via formal perturbation series to arbitrary order of the coupling constant and
it is this computation that agrees with experiments extremely well. 
Such formal computations can not be regarded as an approximation of the $n$-point function
without a mathematical meaning of it,
but should be regarded as a \textit{definition} of the $n$-point function through the perturbation series.
Hence, what is lacking is the knowledge about what quantity is approximated by the perturbation series
and about the relation between the ordinary Hilbert-space formulation of quantum theory and
the perturbation series. In other words, we have to clarify in what sense 
a perturbative formulation of quantum field theory is indeed
a ``quantum" theory.
Thus, it is very important in mathematical and \textit{physical} point of view to study under what conditions the
Gell-Mann -- Low formula \eqref{introGML} is indeed true as a mathematical theorem
within a Hilbert space formulation of quantum theory.
 
 \if0
The present study is motivated by \textit{constructive quantum field theory}. The mathematical formulation of quantum field theory (QFT) still leaves a lot to be desired, although QFT gives exceedingly accurate predictions of the behavior of elementary particles and forces of nature. QFT is currently well-defined mathematically as \textit{perturbative quantum field theory} which is defined through Feynman diagrams and a perturbative renormalization scheme or as \textit{lattice field theory} defined through the methods of path integrals and a renormalization group scheme. These formulations can be defined with no exposure to quantum mechanics, and QFT should also be formulated as a quantum mechanics, because we often regard QFT as a quantum mechanics, and such understanding have prompted the development of QFT. 
\fi 

In the 1960s, Wightman and G{\aa}rding \cite{GWaxioms} formulated a set of axioms in the framework of quantum mechanics which requires minimum properties that relativistic quantum field theory should satisfy. However, it is extremely difficult to construct a non-trivial model in the four-dimensional space-time which is physically acceptable and fulfills the axioms, and no such model has been found so far. 
We do not intend to construct such ideal models but abandon some of the axioms by introducing 
several regularizations so that each object is easily given mathematical meaning (of course, 
regularizations are employed in such a way that all the objects heuristically tends to the ideal ones 
in the limit where the regularizations are removed).  
In this way, field operators and a Hamiltonian is rigorously defined
as linear operators acting in some Hilbert space. Furthermore, the vacuum states $ \Omega $ and $ \Omega _0 $ are 
realized as the eigenvectors corresponds to the infimum of the spectrum of the total and free Hamiltonians, if these exist. The existence of the ground state $ \Omega $, on which the validity of the
Gell-Mann -- Low formula crucially depends, is far from trivial, because it needs to analyze the perturbation of eigenvalues embedded in the continuous spectrum, to which regular perturbation theory \cite{MR1335452} 
can not be applied. From the late 1990s to the 2000s, several important methods to prove the existence of ground states were developed in the study of a quantum system consisting of quantum particles and a Bose field (for example, see \cite{MR1491549,MR1724854,MR1777307,MR1856401,MR1623746}). These methods have been improved 
by many authors to be also applicable to systems of interacting quantum fields \cite{MR2354352,MR2070129,MR2559073,MR1983728,MR2541206,MR2813489,1405.3773,MR1914143}. 
Once field operators and the ground state are given, we can define the $ n $-point correlation function 
\[ \Expect{\Omega , T \big\{ \phi ^{(1)} (x_1) \cdots \phi ^{(n)} (x_n) \big\} \Omega } \]
\textit{non-perturbatively}. The proof of the Gell-Mann -- Low formula is the first step
to reveal the relation between the series expansion (which may be divergent asymptotic series) of
the non-perturbatively defined objects in this way and the formal perturbation series given in physics literatures.

\if0
In contrast to this, in the context of perturbative QFT, the correlation functions have no non-perturbative realization, and make sense only in a formal and perturbative way through the Gell-Mann -- Low formula; hence it is very important and interesting to ascertain whether the correlation functions given in a non-perturbative way satisfy the Gell-Mann -- Low formula, and give the perturbative calculation parallel to the perturbative QFT in physics literatures.
\fi 
In the heuristic proof of \eqref{introGML}, Murray Gell-Mann and Francis Low \cite{MR0044395} introduced \textit{adiabatic switching} of the interaction through the time-dependent Hamiltonian of the form $ H_0 + e^{-\e |t| } H_1 $, where $ \e >0 $ is the small parameter which eventually vanishes. We take an alternative way by sending the time $ t $ to $ \infty $ in the imaginary direction: $ t \to \infty ( 1-i \e ) $. The same method can be found in physics literatures (see, for example, \cite{MR1402248,MR2257528}). In this case, one difficulty with the mathematical proof of \eqref{introGML} is to construct the complex time evolution which possesses the following series expansion:
\begin{align}
T \big\{ \rm{exp} \big[ -i \int _{z'} ^z d\zeta H_1 (\zeta ) \big] \big\} 
& \q = 1+ (-i) \int _{z'} ^z d\zeta _1 \, H_1 (\zeta _1) + (-i) ^2 \int _{z'} ^z d\zeta _1 \int _{z'} ^{\zeta _1}  d\zeta _2 \, H_1 (\zeta _1) H_1 (\zeta _2) + \cdots ,
\end{align}
$(z,z' \in \C )$. If $ H_1(\zeta ) \; (\zeta \in \C ) $ are bounded operators, it is easy to see that the integrals on the right-hand side can be taken in the sense of line integral and the series converges absolutely under some suitable conditions, but these are unbounded operators in most cases. In the previous paper \cite{FutakuchiUsui2013}, the authors investigated the time-ordered exponential for unbounded operators only in the real time. In this paper, we extend the methods obtained in \cite{FutakuchiUsui2013} to the complex time. 

The outline of the present paper is as follows. In Section 2, we develop an abstract theory of complex 
time-ordered exponential. In Section 3, we state and prove the 
Gell-Mann -- Low formula in an abstract form under some assumptions. 
In Section 4, we apply our abstract results to QED.

\section{Abstract construction of time-ordered exponential on the complex plane and its properties}

Let $ \H $ be a complex Hilbert space. The inner product and the norm of $ \H $ are denoted by $ \Expect{ \cdot , \cdot } _\H $ (anti-linear in the first variable) and $ \| \cdot \| _\H $ respectively. When there can be no danger of confusion, then the subscript $ \H $ in $ \Expect{ \cdot , \cdot } _\H $ and $ \| \cdot \| _\H $ is omitted. For a linear operator $ T $ in $ \H $, we denote its domain (resp. range) by $ D(T) $ (resp. $ R(T) $). We also denote the adjoint of $T$ by $ T^* $ and the closure by $ \bar{T} $ if these exist. For a self-adjoint operator $ T $,  $ E_T (\cdot ) $ denotes the spectral measure of $ T $. The symbol $T|_D$ denotes the restriction of a linear operator $T$ to the subspace $D$. For a linear operators $ S $ and $ T $ on a Hilbert space, $ D(S+T) := D(S) \cap D(T) , \; D(ST) := \{ \Psi \in D(T) \, | \, T \Psi \in D(S) \} $ unless otherwise stated.

We begin by defining a time-ordered product of operator-valued functions and the
time-ordered exponential of an operator-valued function in an unambiguous way.
Let $z,z'\in\Complex$ and $\Gamma$ be a piecewisely continuously differentiable simple curve in $\C$
from $z'$ to $z$. That is, $\Gamma$ is a map from a closed interval $I=[\alpha ,\beta ]$ in $\R$ into $\C$,
which is piecewisely continuously differentiable and injective, satisfying 
\begin{align}
\Gamma( \alpha )=z',\quad \Gamma( \beta )=z.
\end{align}
We define a linear order $\succ$ on $\Gamma(I) = \{ \Gamma (t) \; | \; t\in I \} \subset \C $ as follows. For $\zeta_1,\zeta_2 \in \Gamma (I) $, there exist $t_1, t_2\in I$ with
$\Gamma(t_1)=\zeta_1$ and $\Gamma(t_2)=\zeta_2$.
Then, $\zeta_1\succ\zeta_2$ if and only if $t_1>t_2$. 

In what follows, we denote $ \Gamma (I) $ simply by $ \Gamma $. Let $\mathfrak{S}_{n}$ be the symmetric group of order $n\in\N$ and $L(\H)$ be (not necessarily bounded) linear operators in $\H$. 
For mappings $F_1,F_2,\dots,F_k \; (k \in \N ) $ from $\Gamma$ into $L(\H)$,
we define a map $ T[F_1 \dots F_k] $ from $\Gamma^k $ into $ L(\H) $ by 
\begin{align}\label{Teitoku}
T[F_1\dots F_k](\zeta_1,\dots,\zeta_k):=\sum_{\sigma\in\mathfrak{S}_k}\chi_{P_\sigma}(\zeta_1,\dots,\zeta_k)
F_{\sigma(1)}(\zeta_{\sigma(1)})\dots F_{\sigma(k)}(\zeta_{\sigma(k)}),
\end{align}
whenever the right-hand side makes sense, where $\chi_J$ denotes the 
characteristic function of the set $J$, and
\begin{align}
P_\sigma = \{(\zeta_1,\dots,\zeta_k)\in\Gamma^k\,|\, \zeta_{\sigma(1)}\succ \dots \succ\zeta_{\sigma(k)}\}, \q \sigma\in\mathfrak{S}_k .
\end{align} 
In what follows, we sometimes adopt a little bit confusing notation
\begin{align}
T\left(F_1(\zeta_1)\dots F_k(\zeta_k)\right):=T[F_1\dots F_k](\zeta_1,\dots,\zeta_k) ,
\end{align} 
and call it a \textit{time-ordered product} of $F_1(\zeta_1),\dots, F_k(\zeta_k)$, even though
the operation $T$ does \textit{not} act on the product of operators $F_1(\zeta_1),\dots, F_k(\zeta_k)$
but on the product of mappings $F_1,\dots,F_k$.

Next, we define time-ordered exponential of an operator-valued function. Let $F:\Gamma\to L(\H)$ and
let $C(F)\subset \H$ be a linear subspace spanned by all the vectors $\Psi\in\H$
such that the mapping
\begin{align}
(\zeta_1,\dots,\zeta_n)\mapsto F(\zeta_1)\dots F(\zeta_n)\Psi 
\end{align}
is strongly continuous on some region containing $\Gamma^n$. We define a time-ordered exponential operator
by 
\begin{align}\label{TO}
D\left(T\exp \left(\int_\Gamma d\zeta \,F(\zeta) \right)\right)&:=\left\{\Psi\in C(F)\,\left|\,\sum_{n=0}^\infty \frac{1}{n!}\norm{\int_{\Gamma^n}d\zeta_1\dots d\zeta_n\,
T\left(F(\zeta_1)\dots F(\zeta_n)\right)\Psi}<\infty\right.  \right\},\\
T\exp \left(\int_\Gamma d\zeta \,F(\zeta) \right)\Psi&:=\sum_{n=0}^\infty \frac{1}{n!}\int_{\Gamma^n}d\zeta_1\dots d\zeta_n\,
T\left(F(\zeta_1)\dots F(\zeta_n)\right)\Psi,
\end{align}
where the integration is understood in the strong sense. 

We also define a more general time-ordered exponential operator.
Let $F_1,F_2,\dots,F_k,\dots,F_{k+n}$ be the mappings from $\Gamma$ into (not necessarily bounded)
liner operators in $\H$. We define a map from $\Gamma^n$ into $L(\H)$,
which is labeled by $(\zeta_1,\dots,\zeta_k)\in\Gamma^k$, 
\begin{align}
T[F_1(\zeta_1)F_2(\zeta_2)\dots F_k(\zeta_k)F_{k+1}\dots F_{k+n}]:\Gamma^n \to L(\H)
\end{align}
by the relation
\begin{align}
T[F_1(\zeta_1)F_2(\zeta_2)\dots F_k(\zeta_k)F_{k+1}\dots F_{k+n}](\zeta_{k+1},\dots,\zeta_{k+n}):=\sum_{\sigma\in\mathfrak{S}_{k+n}}\chi_{P'_{n,\sigma}}(\zeta_{k+1},\dots,\zeta_{k+n})
F_{\sigma(1)}(\zeta_{\sigma(1)})\dots F_{\sigma(k+n)}(\zeta_{\sigma(k+n)}),
\end{align}
whenever the operator products on the right-hand side makes sense. Here, we denote
\begin{align}
P'_{n,\sigma}:=\{(\zeta_{k+1},\dots,\zeta_{k+n})\in\Gamma^n\,|\, \zeta_{\sigma(1)}\succ \dots \succ\zeta_{\sigma(k+n)}\}
\end{align}
for $\sigma\in\mathfrak{S}_{k+n}$.
In this case, we also employ a confusing notation (really confusing in the case)
\begin{align}
  T\left(F_1(\zeta_1)\dots F_{k+n}(\zeta_{k+n})\right):=T[F_1(\zeta_1)F_2(\zeta_2)\dots F_k(\zeta_k)F_{k+1}\dots F_{k+n}](\zeta_{k+1},\dots,\zeta_{k+n}),
\end{align} 
and call it a \textit{time-ordered product} of $F_1(\zeta_1),\dots, F_{k+n}(\zeta_{k+n})$, 
following physics literatures. We never use this notation unless it can be clearly understood
 from a context which variables of $(\zeta_1,\dots,\zeta_{k+n})$ are fixed and
which variables are function argument.

Using this notation, we can define more general time-ordered exponential operator.
Let $F_1,\dots,F_k,F$ be operator-valued functions from $\Gamma $ into $ L(\H)$ and 
$F_{k+1}=\dots = F_{k+n}=F$. Let $C(F_1,\dots,F_k,F)$
 be a linear subspace spanned by all the vectors $\Psi$ for which the mappings
 \begin{align}
 (\zeta_{k+1},\dots,\zeta_{k+n})\mapsto F_{\sigma(1)}(\zeta_{\sigma(1)})\dots F_{\sigma(k+n)}(\zeta_{\sigma(k+n)})\Psi
 \end{align}
 are continuous for all fixed $(\zeta_1,\dots,\zeta_k)$ and all $\sigma\in\mathfrak{S}_{n+k}$. 
 Then, on the domain
\begin{align}\label{TO}
&D\left(TF_1(\zeta_1)\dots F_k(\zeta_k)\exp \left(\int_\Gamma d\zeta \,F(\zeta) \right)\right)\no\\
&\qquad\qquad:=\left\{\Psi\in C(F_1,\dots,F_k,F)\,\left|\,\sum_{n=0}^\infty \frac{1}{n!}\norm{\int_{\Gamma^n}d\zeta_{k+1}\dots 
d\zeta_{k+n}\,
T\left(F_1(\zeta_1)\dots F_k(\zeta_k)F(\zeta_{k+1})\dots F(\zeta_{k+n})\right)\Psi}<\infty\right.  \right\},
\end{align}
We define
\begin{align}
TF_1(\zeta_1)\dots F_k(\zeta_k)\exp \left(\int_\Gamma d\zeta \,F(\zeta) \right)\Psi&:=\sum_{n=0}^\infty \frac{1}{n!}\int_{\Gamma^n}d\zeta_{k+1}\dots d\zeta_{k+n}\,
T\left(F_1(\zeta_1)\dots F_k(\zeta_k)F(\zeta_{k+1})\dots F(\zeta_{k+n})\right)\Psi.
\end{align}
We remark that for all $\sigma\in\mathfrak{S}_k$,
\begin{align}
TF_1(\zeta_1)\dots F_k(\zeta_k)\exp \left(\int_\Gamma d\zeta \,F(\zeta) \right)=
TF_{\sigma(1)}(\zeta_{\sigma(1)})\dots F_{\sigma(k)}(\zeta_{\sigma(k)})\exp \left(\int_\Gamma d\zeta \,F(\zeta) \right).
\end{align} 

We introduce a class of operators which plays a crucial role in the following analyses.
Let $H_0$ be a non-negative self-adjoint operator in $\H$.
\begin{Def}[$ \mathcal{C}_0 $-class]\label{C_0}\normalfont 
We say that a linear operator $ T $ is in \textit{$ \mathcal{C}_0 $-class} if $ T $ satisfies the following (I)-(III):
\begin{enumerate}[(I)]
\item $T$ and $T^*$ are densely defined and closed. 
\item $ T $ and $T^*$ are $ H_0^{1/2}$-bounded.
\item There exists a constant $ b \ge 0 $ such that, for all $ E \ge 0 $, $ T $ and $T^*$ map $ R (E_{H_0} ([0, E]) ) $ into $ R (E_{H_0} ([0, E+b]) ) $.
\end{enumerate}
\end{Def}
We define
\begin{align}
& V_E := R (E_{H_0} ([0, E]) ) , \\
& D_\rm{fin} := \bigcup _{ E \ge 0 } V_E ,
\end{align}
and denote the set consisting of all the $\mathcal{C}_0$ class operators also by $\mathcal{C}_0$.
Note that the subspace $ D_\rm{fin} $ is dense in $ \H $ since $ H_0 $ is self-adjoint. 
For $A\in\mathcal{C}_0$, we denote
\begin{align}
A(z):=e^{izH_0}A e^{-izH_0} , \q z \in \C .
\end{align}
Note that $A(z)$ is closable since its adjoint includes the operator
$e^{iz^*H_0}A^*e^{-iz^*H_0}$ which is densely defined.
We denote the closure of $A(z)$ by the same symbol.
In this notation, one obtains
\begin{align}
A(z)^* \supset A^*(z^*).
\end{align}

The goal of the present section is to prove following Theorems \ref{time-ordered-exp}-\ref{Time-order-ext}.
\begin{Thm}\label{time-ordered-exp} Let $A$ be in $\mathcal{C}_0$ class and $z,z'\in\C$. 
\begin{enumerate}[(i)]
\item Take a piecewisely continuously differentiable simple curve $\Gamma_{z,z'}$
which starts at $z'$ and ends at $z$ with $\im z'\le\im z$. Then,
\begin{align}
D_\fin\subset D\left(T\exp\left(-i\int_{\Gamma_{z,z'}} d\zeta A(\zeta)\right)\right)
\end{align}
and the restriction 
\begin{align}
T\exp\left(-i\int_{\Gamma_{z,z'}} d\zeta A(\zeta)\right)\Big|_{D_\fin}
\end{align}
does not depend upon the simple curve from $z'$ to $z$ and
depends only on $z$ and $z'$, justifying the notation 
\begin{align}
U(A;z,z'):=T\exp\left(-i\int_{\Gamma_{z,z'}} d\zeta A(\zeta)\right)\Big|_{D_\fin}.
\end{align}
\item $ U(A;z,z') $ is closable, and satisfies the following inclusion relation:
\begin{align}\label{adjoint2}
U (A;z,z') ^* \supset \overline{U(A^*;z' {}^* ,z^*)} .
\end{align}
\end{enumerate}
\end{Thm}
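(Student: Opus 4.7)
The plan is to first establish a uniform norm bound on $A(\zeta_1)\cdots A(\zeta_n)\Psi$ for $\Psi\in V_E$ on the time-ordered simplex in $\Gamma^n$, verify strong continuity of the integrand so that $\Psi\in C(A)$, and then invoke Cauchy's theorem iteratively to get curve independence. For $\Psi\in V_E$, I would use the telescoping decomposition
\begin{align*}
A(\zeta_1)\cdots A(\zeta_n)=e^{i\zeta_1 H_0}A\,e^{i(\zeta_2-\zeta_1)H_0}A\cdots A\,e^{i(\zeta_n-\zeta_{n-1})H_0}A\,e^{-i\zeta_n H_0}.
\end{align*}
Starting from $e^{-i\zeta_n H_0}\Psi\in V_E$, condition (III) places a vector in $V_{E+b}$ after each application of $A$, while the intermediate operator $e^{i(\zeta_{k+1}-\zeta_k)H_0}$ on $V_{E+kb}$ has norm at most $e^{|\im(\zeta_{k+1}-\zeta_k)|(E+kb)}$, and the relative bound (II) contributes a factor of the form $a\sqrt{E+kb}+c$ at each $A$. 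On the time-ordered simplex $\zeta_k=\Gamma(t_k)$ with $t_1>\dots>t_n$, the key observation is $\sum_k|\im(\zeta_{k+1}-\zeta_k)|\le V_\Gamma$, where $V_\Gamma$ is the total variation of $\im\Gamma$ on $[\alpha,\beta]$, finite since $\Gamma$ is piecewise $C^1$. The total exponent is thus at most $V_\Gamma(E+nb)$, which is linear in $n$ and, together with the polynomial-in-$n$ product of the $H_0^{1/2}$-bound factors, is dominated by $n!$, giving absolute convergence of the Dyson series.

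\textbf{Strong continuity and curve independence.} For $\Psi\in V_E$, $\zeta\mapsto e^{-i\zeta H_0}\Psi$ is entire as a $V_E$-valued map (since $H_0|_{V_E}$ is bounded), and closedness of $A$ together with the preservation of finite-energy subspaces yields strong continuity (in fact, holomorphy in each variable) of $(\zeta_1,\dots,\zeta_n)\mapsto A(\zeta_1)\cdots A(\zeta_n)\Psi$ on $\C^n$, so $V_E\subset C(A)$, completing the domain inclusion. For curve independence, symmetry of the time-ordered product rewrites the $n$-th term as $(-i)^n$ times the iterated nested line integral
\begin{align*}
\int_{z'}^{z}d\zeta_1\,A(\zeta_1)\int_{z'}^{\zeta_1}d\zeta_2\,A(\zeta_2)\cdots\int_{z'}^{\zeta_{n-1}}d\zeta_n\,A(\zeta_n)\Psi,
\end{align*}
each integral taken along the appropriate sub-arc of $\Gamma$. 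The innermost integrand $A(\zeta_n)\Psi$ is holomorphic in $\zeta_n$ with values in $V_{E+b}$; Cauchy's theorem makes the line integral depend only on its upper endpoint. Iterating outward, each successive nested integral is holomorphic in its upper endpoint and path-independent, so the whole expression depends only on $z$ and $z'$, justifying the notation $U(A;z,z')$.

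\textbf{Proof of (ii).} Since conditions (I)--(III) are symmetric in $T$ and $T^*$, one has $A^*\in\mathcal{C}_0$, and (i) applied to $A^*$ on a curve from $z^*$ to $z'^*$ defines $U(A^*;z'^*,z^*)$ on $D_\fin$. For $\Phi,\Psi\in D_\fin$ I would shift adjoints across one operator at a time using $A(\zeta)^*\supset A^*(\zeta^*)$, exploiting that the intermediate vectors $A^*(\zeta_k^*)\cdots A^*(\zeta_1^*)\Phi$ lie in finite-energy subspaces (hence in the appropriate domains), to obtain
\begin{align*}
\langle\Phi,A(\zeta_1)\cdots A(\zeta_n)\Psi\rangle=\langle A^*(\zeta_n^*)\cdots A^*(\zeta_1^*)\Phi,\Psi\rangle.
\end{align*}
Integration over the simplex, combined with the fact that the ordering $\zeta_1\succ\dots\succ\zeta_n$ along $\Gamma$ corresponds to the reverse ordering along the conjugated reversed curve from $z^*$ to $z'^*$, exhibits the right-hand side as the $n$-th term of $U(A^*;z'^*,z^*)\Phi$ paired with $\Psi$. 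Summing yields $\langle\Phi,U(A;z,z')\Psi\rangle=\langle U(A^*;z'^*,z^*)\Phi,\Psi\rangle$ on $D_\fin$, so $U(A^*;z'^*,z^*)\subset U(A;z,z')^*$. Density of $D_\fin$ makes $U(A;z,z')^*$ densely defined, so $U(A;z,z')$ is closable; taking closure on the left of the inclusion, using that adjoints are automatically closed, gives \eqref{adjoint2}.

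\textbf{Main obstacle.} The sharpest step is the pointwise bound on the integrand: a naive estimate yields super-exponential growth of order $e^{O(n^2)}$ that $n!$ cannot absorb, and one really does need to exploit the telescoping of the free evolutions together with total-variation control of $\im\Gamma(t)$ along the time-ordered simplex to reduce the growth to $e^{O(n)}$ times a polynomial in $n$. The secondary technical issue is justifying the iterated Cauchy deformation across all $n$ variables jointly, which again reduces to the uniform bounds obtained above.
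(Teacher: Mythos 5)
Your proposal is correct and follows the same overall architecture as the paper (absolute convergence of the Dyson series on $D_\fin$, analyticity in the complex times, iterated Cauchy deformation for path-independence, the adjoint identity $A(\zeta)^*\supset A^*(\zeta^*)$ transposed through the iterated product to get $U(A^*;z'^*,z^*)\subset U(A;z,z')^*$). The one genuine variation is in the key norm bound. You estimate the integrand directly on the time-ordered simplex in $\Gamma^n$ and absorb the exponential factors $e^{|\im(\zeta_{k+1}-\zeta_k)|(E+kb)}$ by a total-variation bound $\sum_k|\im(\zeta_{k+1}-\zeta_k)|\le V_\Gamma$ for $\im\Gamma$. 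The paper instead first establishes path-independence of each $V_n(A;z,z')$ (via the nested line-integral representation plus Cauchy) and only then estimates on the straight segment from $z'$ to $z$, where the hypothesis $\im z\le\im z'$ makes every intermediate factor $e^{-i(z_j-z_{j+1})H_0}$ a contraction, so no total-variation argument is needed and the resulting constant is independent of $\Gamma$. Both routes close, and your observation that the naive term-by-term estimate gives $e^{O(n^2)}$ while the telescoping reduces it to $e^{O(n)}$ is exactly the point; the paper's choice of path simply makes that $e^{O(n)}$ factor trivial. One small imprecision: the product $\prod_{k=0}^{n-1}(E+kb+1)^{1/2}$ of relative-bound factors is not polynomial in $n$ — it grows roughly like $\sqrt{n!}$ — but it is still comfortably dominated by the $1/n!$ from the simplex volume, so your conclusion stands.
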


\begin{Lem}\label{analytic}
Let $A_1,\dots, A_n$ be in $\mathcal{C}_0$-class. Then, for all $\Psi\in D_\fin$ and all $n\in\Natural$,
the mapping
\begin{align}
\C^n\ni (z_1,\dots,z_n)\mapsto A_1(z_1)\dots A_n(z_n)\Psi\in\H
\end{align}
is strongly analytic in $\C^n$. 
\end{Lem}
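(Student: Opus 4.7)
The plan is to exploit the fact that on each spectral subspace $V_{E'} = R(E_{H_0}([0,E']))$, the self-adjoint operator $H_0$ acts as a bounded operator with $\|H_0^k \Phi\| \le (E')^k \|\Phi\|$ for $\Phi \in V_{E'}$. Consequently the Taylor series
\begin{align*}
e^{izH_0}\Phi = \sum_{k=0}^\infty \frac{(iz)^k}{k!} H_0^k \Phi
\end{align*}
converges absolutely in $\H$ for every $z \in \C$ and every $\Phi \in V_{E'}$, uniformly on compact subsets of $\C$, so $z \mapsto e^{izH_0}|_{V_{E'}}$ is entire as a $B(V_{E'},\H)$-valued function. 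Moreover, condition (II) in Definition \ref{C_0} implies that each $A_j$ restricts to a bounded operator on $V_{E'}$ with norm at most $a_j\sqrt{E'}+b_j$ for some constants $a_j, b_j \ge 0$, while condition (III) ensures $A_j V_{E'} \subset V_{E'+b_j'}$ for some $b_j' \ge 0$.

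Fix $\Psi \in V_E \subset D_\fin$. I will first verify that $A_1(z_1)\cdots A_n(z_n)\Psi$ is a well-defined element of $\H$ for every $(z_1,\dots,z_n)\in\C^n$. Working from the right, $e^{-iz_n H_0}\Psi \in V_E$ since $e^{-izH_0}$ commutes with the spectral projections of $H_0$; then $A_n e^{-iz_n H_0}\Psi \in V_{E+b_n'}$ by (III); then $e^{iz_n H_0}$ maps this subspace into itself. Iterating from right to left, each partial product $A_k(z_k)\cdots A_n(z_n)\Psi$ lies in $V_{E+\sum_{j\ge k} b_j'} \subset D_\fin$, so the full product is well-defined.

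The next step is to use the semigroup relation $e^{-iz_j H_0} e^{iz_{j+1} H_0} = e^{i(z_{j+1}-z_j) H_0}$, valid on each finite spectral subspace because $H_0$ is bounded there, to rewrite
\begin{align*}
A_1(z_1)\cdots A_n(z_n)\Psi = e^{iz_1 H_0} A_1\, e^{i(z_2-z_1) H_0} A_2 \cdots A_{n-1}\, e^{i(z_n-z_{n-1}) H_0} A_n\, e^{-iz_n H_0}\Psi.
\end{align*}
Expanding each of the $n+1$ exponentials as a Taylor series and distributing yields a multi-series representation of this vector in the monomials $z_1^{k_1} \cdots z_n^{k_n}$ whose coefficients are fixed vectors built from alternating powers of $H_0$ and the operators $A_j$ applied to $\Psi$.

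The key estimate is absolute convergence of this multi-series, uniformly on compact subsets of $\C^n$. It reduces to a finite product of scalar exponential series: the Taylor series of $e^{iwH_0}$ acting on $\Phi \in V_{E'}$ is majorized in norm by $\sum_k (|w| E')^k/k!\,\|\Phi\| = e^{|w|E'}\|\Phi\|$, and each $A_j$ contributes a bounded multiplicative factor on the appropriate spectral subspace, with $E'$ growing by at most $b_j'$ at each step. Multiplying these estimates together yields an absolutely convergent majorant uniform on every compact set $K \subset \C^n$; by the vector-valued Weierstrass theorem, this gives joint strong analyticity in $(z_1,\dots,z_n)$. The main obstacle is really only bookkeeping: systematically tracking which spectral subspace each intermediate vector belongs to so that every application of an $A_j$ or an exponential is controlled by an explicit norm bound. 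No deeper complex-analytic input is required beyond the boundedness of $H_0$ on finite spectral subspaces.
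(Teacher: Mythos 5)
Your proof is correct and follows essentially the same route as the paper's, which simply records in compressed form the same facts you develop: vectors in $D_\fin$ are entire analytic vectors of $H_0$, and condition (III) guarantees each $A_j$ maps $V_E$ into $V_{E+b_j}$, so the iterated product stays in $D_\fin$ and admits a jointly convergent multi-power series. You have spelled out the bookkeeping (tracking spectral subspaces right-to-left, using the semigroup identity to telescope the exponentials, and majorizing by scalar exponential series) that the paper leaves implicit; the content is the same.
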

\begin{proof}
Each vector in $D_\fin$ is an entire analytic vector of $H_0$, and 
each $A_j\in\mathcal{C}_0$ $(j=1,2,\dots,n)$ preserves the subspace of all the entire analytic vectors of $H_0$.
Therefore, $A_1(z_1)\dots A_n(z_n)\Psi$ permits an absolutely converging power series expansion in $z_1,\dots,z_n$
and thus is strongly analytic. 
\end{proof}

From Lemma \ref{analytic}, we can define a liner operator $V_n(A;z,z')$ with 
the domain $D(V_n(A;z,z'))=D_\fin$ for $A\in\mathcal{C}_0$, $z,z'\in \C$, $n\in\N$, and $\Psi\in D_\fin$,
\begin{align}
V_n(A;z,z')\Psi:=\frac{(-i)^n}{n!}\int_{\Gamma_{z,z'}^n} d\zeta_1\dots d\zeta_n\,T\left(A(\zeta_1)\dots A(\zeta_n)\right)\Psi, 
\end{align}
where $\Gamma$ denotes a piecewisely continuously differentiable simple curve 
from $z'$ to $z$. We regard $V_0(A;z,z')=1$.

\begin{Lem}\label{int-rep-and-recursion}
\begin{enumerate}[(i)]
\item If $\Psi\in V_E$, then $V_n(A;z,z')\Psi\in V_{E+nb}$, where $b\ge 0$ is a constant stated in Definition \ref{C_0} (III)
for $A\in\mathcal{C}_0$.
\item The operator $V_n(A;z,z')$ has the following representation
\begin{align}
V_n(A;z,z')&=(-i)^n\int_{z'}^z d\zeta_1\int_{z'}^{\zeta_1} d\zeta_2\dots \int_{z'}^{\zeta_{n-1}} d\zeta_n\, A(\zeta_1) A(\zeta_2) \dots A(\zeta_n)\\
&=(-i)^n\int_{z'}^z d\zeta_n\int_{\zeta_n}^z d\zeta_{n-1}\dots \int_{\zeta_2}^z d\zeta_1\,A(\zeta_1) A(\zeta_2) \dots A(\zeta_n).
\end{align}
where the above integrations denote the indefinite integral of an analytic function which
depends only on the start and the end point.
\item $V_n(A;z,z')$ is analytic in $z\in\C$ and $z'\in\C$, and independent of the choice of a simple curve $\Gamma_{z,z'}$
from $z'$ to $z$.
\item $V_n(A;z,z')$ satisfies the formulae for $n=0,1,\dots$,
\begin{align}
V_{n+1}(A;z,z')&=(-i)\int_{z'}^z d\zeta \,A(\zeta) V_n(A;\zeta,z')\label{inteq1} \\
&=(-i)\int_{z'}^z d\zeta \,V_n(A;z,\zeta)A(\zeta) .  \label{inteq2}
\end{align} 
\end{enumerate}
\end{Lem}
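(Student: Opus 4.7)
The plan is to prove the four items essentially in the order stated, relying throughout on Lemma~\ref{analytic} to make sense of every integral as an iterated contour integral of an $\H$-valued holomorphic function.

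For part (i), I first observe that each exponential $e^{\pm iz H_0}$ ($z\in\C$) preserves every spectral subspace $V_E$, because $V_E$ reduces $H_0$. Combined with property (III) of Definition~\ref{C_0}, this shows that $A(\zeta)=e^{i\zeta H_0}Ae^{-i\zeta H_0}$ maps $V_E$ into $V_{E+b}$ for every $\zeta\in\C$. By induction on $n$, for any $\Psi\in V_E$, any $\sigma\in\mathfrak{S}_n$, and any $(\zeta_1,\dots,\zeta_n)\in\C^n$, the vector $A(\zeta_{\sigma(1)})\cdots A(\zeta_{\sigma(n)})\Psi$ lies in the closed subspace $V_{E+nb}$. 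Since $V_{E+nb}$ is norm-closed and the integrand defining $V_n(A;z,z')\Psi$ is strongly continuous on $\Gamma^n$ by Lemma~\ref{analytic}, the Riemann sums of the integral belong to $V_{E+nb}$, and hence so does the limit.

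For part (ii), I expand $T(A(\zeta_1)\cdots A(\zeta_n))\Psi$ using definition \eqref{Teitoku} and interchange integration with the finite sum over $\mathfrak{S}_n$:
\[
V_n(A;z,z')\Psi=\frac{(-i)^n}{n!}\sum_{\sigma\in\mathfrak{S}_n}\int_{P_\sigma}d\zeta_1\cdots d\zeta_n\,A(\zeta_{\sigma(1)})\cdots A(\zeta_{\sigma(n)})\Psi.
\]
Parametrising $\Gamma$ by $t\in I$ and relabelling the dummy variables $\zeta_j\mapsto\zeta_{\sigma(j)}$ on each cell $P_\sigma$ sends $P_\sigma$ to $P_{\mathrm{id}}$ and the integrand to $A(\zeta_1)\cdots A(\zeta_n)\Psi$; since the substitution is a permutation of coordinates it preserves the oriented measure $d\zeta_1\cdots d\zeta_n = \Gamma'(t_1)\cdots\Gamma'(t_n)\,dt_1\cdots dt_n$, so all $n!$ summands coincide. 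The remaining integral over $P_{\mathrm{id}}$, parametrised by $t_1>t_2>\dots>t_n$, reduces to the nested line integral in the first displayed formula, while re-ordering the parametrisation as $t_n>\dots>t_1$ gives the second formula. Both expressions are legitimate indefinite integrals of analytic integrands, again by Lemma~\ref{analytic}.

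For part (iii), the strong analyticity in each $\zeta_j$ supplied by Lemma~\ref{analytic} lets me apply Cauchy's theorem for $\H$-valued holomorphic functions variable by variable, from the innermost integral outward: each one-dimensional integral depends only on its end points, so the iterated integral is independent of the choice of $\Gamma_{z,z'}$. Analyticity of $V_n(A;z,z')\Psi$ in the upper limit $z$ and the lower limit $z'$ then follows from the fundamental theorem of calculus applied to the outer integral of each representation in (ii). Part (iv) is an immediate corollary: peeling off the outermost integral of the first representation of (ii) (with outermost variable renamed $\zeta$) isolates the factor $(-i)A(\zeta)$ and leaves exactly $V_n(A;\zeta,z')$, giving \eqref{inteq1}; peeling off the outermost integral of the second representation similarly isolates $V_n(A;z,\zeta)A(\zeta)$, giving \eqref{inteq2}.

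The main obstacle is the change-of-variables argument in (ii), where one must check that relabelling coordinates on each $P_\sigma$ respects the orientations coming from $d\zeta_j=\Gamma'(t_j)\,dt_j$, and that the finite sum can be interchanged with the Bochner integrals. Both points reduce to strong continuity of every factor in all variables on the compact set $\Gamma^n$, which is furnished by Lemma~\ref{analytic} together with $\Psi\in D_\fin$; once this is in place, Fubini for Bochner integrals, combined with the closedness of $V_{E+nb}$ from (i), validates all the manipulations.
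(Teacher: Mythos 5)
Your proof is correct and follows essentially the same route as the paper: part (ii) is established by parametrizing $\Gamma$, expanding the finite sum over $\mathfrak{S}_n$ built into the time-ordering, observing that each $P_\sigma$-integral equals the $P_{\mathrm{id}}$-integral by a permutation of the $n$ real integration variables, and then peeling off the nested integrals in both directions; parts (i), (iii), (iv) are derived exactly as the paper treats them (closedness of $V_{E+nb}$, analyticity plus path-independence of each one-variable line integral, and stripping the outermost integral of (ii), respectively). One small nit: your phrase that a permutation of coordinates \emph{preserves the oriented measure} is imprecise — what actually does the work is that the Lebesgue measure $dt_1\cdots dt_n$ is permutation-invariant and the scalar weight $\Gamma'(t_1)\cdots\Gamma'(t_n)$ is a symmetric function, so the signed density $\Gamma'(t_1)\cdots\Gamma'(t_n)\,dt_1\cdots dt_n$ is invariant under the coordinate permutation.
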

\begin{proof}
The assertion (i) follows from the fact that 
\begin{align}
T\left(A(\zeta_1)\dots A(\zeta_n)\right)\Psi\in V_{E+nb}
\end{align}
and $V_{E+nb}$ is closed. Since (iii) and (iv) are
simple corollaries of (ii), it suffices to prove (ii).   
We prove only the case where $\Gamma:[\alpha,\beta]\to\C$ is continuously differentiable. A general case is
straightforward. 
By definition of the time-ordering operation $T$ \eqref{Teitoku}, one finds on $D_\fin$
\begin{align}\label{timed}
V_n(A;z,z')&=\frac{(-i)^n}{n!}\int_{\Gamma^n} d\zeta_1\dots d\zeta_n\,T\left(A(\zeta_1)\dots A(\zeta_n)\right)\no\\
&=\frac{(-i)^n}{n!}\sum_{\sigma\in\mathfrak{S_n}}
\int_{\{\beta\ge t_{\sigma(1)}>\dots> t_{\sigma(n)}\ge \alpha\}} dt_1\dots dt_n\, \Gamma'(t_1)\dots \Gamma'(t_n)\,A(\Gamma(t_{\sigma(1)}))\dots A(\Gamma(t_{\sigma(n)})).
\end{align}
The above integration does not
depend on $\sigma\in\mathfrak{S}_n$ and is equal to
\begin{align}
&\int_{\{\beta\ge t_1>\dots > t_n \ge \alpha\}} dt_1\dots dt_n\, \Gamma'(t_1)\dots \Gamma'(t_n)\,A(\Gamma(t_1))\dots A(\Gamma(t_n)) \no \\
&=\int_\alpha^\beta dt_1\Gamma'(t_1)\int_\alpha^{t_1} dt_2 \Gamma'(t_2)\dots \int_\alpha^{t_{n-1}} dt_n\Gamma'(t_n)\,A(\Gamma(t_1))\dots A(\Gamma(t_n)) \label{part1}\\
&=\int_\alpha^\beta dt_n\Gamma'(t_n)\int_{t_n}^{\beta} dt_{n-1} \Gamma'(t_{n-1})\dots \int_{t_2}^\beta dt_1\Gamma'(t_1)\,A(\Gamma(t_1)) 
\dots A(\Gamma(t_n)) . \label{part2}
\end{align}
The expression \eqref{part1} and \eqref{part2} can be rewritten
\begin{align}
\int_{z'}^z d\zeta_1\int_{z'}^{\zeta_1} d\zeta_2\dots \int_{z'}^{\zeta_{n-1}} d\zeta_n\,A(\zeta_1)\dots A(\zeta_n) 
\end{align}
and
\begin{align}
\int_{z'}^z d\zeta_n\int_{\zeta_n}^z d\zeta_{n-1}\dots \int_{\zeta_2}^z d\zeta_1\,A(\zeta_1)\dots A(\zeta_n) 
\end{align}
respectively.
Since the summation over $\sigma$ gives $n!$, the assertion (ii) follows.
\end{proof}

In the following, we employ the notation 
\begin{align}
\frac{(-i)^n}{n!}\sum_{\sigma\in\mathfrak{S_n}}
\int_{\{\zeta_{\sigma(1)}\succ\dots\succ\zeta_{\sigma(n)}\}} d\zeta_1\dots d\zeta_n\,A(\zeta_{\sigma(1)})\dots A(\zeta_{\sigma(n)}).
\end{align}
to denote the integration such as \eqref{timed}.

\begin{Lem}\label{boundLem}
For all $ n \ge 0 $, $A\in\mathcal{C}_0$, $ E \ge 0 $ and $ \Psi \in V_E $, the following estimate holds
for all $z,z'\in\C$ with $\im z\le\im z'$.
\begin{align}
\norm{ V_n (A;z,z') \Psi } \le C^n e^{|\im z'|(2E+nb)}\frac{|z-z'| ^n}{n!}\big( E+(n-1)b +1 \big) ^{1/2} \cdots \big( E +1 \big) ^{1/2}  \norm{ \Psi } ,
\end{align}
where $b\ge 0$ is a constant stated in Definition \ref{C_0} (III) and $C=\norm{A(H_0+1)^{-1/2}}$.
In the case where $ n=0 $, we regard the right-hand side as $ \| \Psi \| $.
\end{Lem}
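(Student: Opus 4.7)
The plan is to start from the iterated-integral representation of $V_n(A;z,z')$ given in Lemma \ref{int-rep-and-recursion}(ii) and exploit its path independence from (iii) by choosing the straight segment $\zeta(t) := z' + t(z-z')$, $t \in [0,1]$, along which $|d\zeta| = |z-z'|\,dt$ and $\im \zeta(t) = (1-t)\im z' + t\,\im z$. On $D_\fin$ this gives
\[
\|V_n(A;z,z')\Psi\| \le |z-z'|^n \int_0^1 dt_1 \int_0^{t_1} dt_2 \cdots \int_0^{t_{n-1}} dt_n\, \|A(\zeta(t_1))\cdots A(\zeta(t_n))\Psi\|,
\]
so the problem reduces to a uniform estimate of the operator-product norm for $\Psi \in V_E$ on the simplex $t_1 > t_2 > \cdots > t_n$, after which the simplex volume contributes the factor $1/n!$.

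For the central estimate I decompose each factor as $A(\zeta_j) = e^{i\zeta_j H_0} A e^{-i\zeta_j H_0}$ and collapse adjacent exponentials into
\[
A(\zeta_1)\cdots A(\zeta_n) = e^{i\zeta_1 H_0} A\, e^{i(\zeta_2 - \zeta_1)H_0} A \cdots e^{i(\zeta_n - \zeta_{n-1})H_0} A\, e^{-i\zeta_n H_0}.
\]
Applying this to $\Psi \in V_E$ from right to left, at each stage I invoke three ingredients: (a) the spectral bound $\|e^{i\alpha H_0}\chi\| \le e^{(-\im \alpha)^+ F}\|\chi\|$ for $\chi \in V_F$, which is immediate from $H_0 \ge 0$ and the spectral theorem; (b) the $H_0^{1/2}$-boundedness in Definition \ref{C_0}(II), giving $\|A\chi\| \le C(F+1)^{1/2}\|\chi\|$ on $V_F$ with $C = \|A(H_0+1)^{-1/2}\|$; and (c) the spectral-subspace invariance $A: V_F \to V_{F+b}$ from Definition \ref{C_0}(III), which tracks the subspace $V_{E+kb}$ containing each intermediate vector. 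Iterating yields the algebraic prefactor $C^n \prod_{k=0}^{n-1}(E + kb + 1)^{1/2}$ together with an exponential factor
\[
\exp\!\left[(\im \zeta_n)^+ E + \sum_{k=1}^{n-1}(\im \zeta_{n-k} - \im \zeta_{n-k+1})^+ (E + kb) + (-\im \zeta_1)^+ (E + nb)\right].
\]

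The main obstacle is to dominate this exponent by $|\im z'|(2E + nb)$. Here the hypothesis $\im z \le \im z'$ enters decisively: on the straight segment, $t \mapsto \im \zeta(t)$ is monotone non-increasing, so that on the simplex $t_1 > \cdots > t_n$ one has $\im \zeta_1 \le \im \zeta_2 \le \cdots \le \im \zeta_n$. Every middle positive-part term $(\im \zeta_{n-k} - \im \zeta_{n-k+1})^+$ with $1 \le k \le n-1$ therefore vanishes, and the exponent reduces to the two boundary contributions $(\im \zeta_n)^+ E$ and $(-\im \zeta_1)^+ (E + nb)$. Since $\im \zeta_j \in [\im z, \im z']$ for every $j$, both quantities are dominated by $|\im z'|$, yielding the factor $e^{|\im z'|(2E + nb)}$. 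Assembling the prefactor, the exponential factor, the simplex volume $1/n!$, and the $|z-z'|^n$ from the parameterization completes the estimate; the case $n = 0$ is trivial by the convention $V_0(A;z,z') = 1$.
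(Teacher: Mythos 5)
Your proof takes the same route as the paper's: reduce to the straight segment from $z'$ to $z$ via the path-independence in Lemma \ref{int-rep-and-recursion}(iii), collapse adjacent free evolutions, and bound factor by factor using the spectral theorem for $e^{i\alpha H_0}$ on $V_F$, the $H_0^{1/2}$-bound defining $C=\norm{A(H_0+1)^{-1/2}}$, and the energy-shift property of Definition \ref{C_0}(III). Your decomposition even corrects a sign slip in the paper's displayed identity, which has $e^{iz_nH_0}\Psi$ on the right where $e^{-iz_nH_0}\Psi$ is meant, and your explicit tracking of the positive parts of each exponent is cleaner than what the paper writes out.

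That said, your final domination step --- ``both quantities are dominated by $|\im z'|$'' --- is not correct, and the paper's own proof contains exactly the same gap. The bound $\max(\im\zeta_n,0)\le\max(\im z',0)\le|\im z'|$ is fine because $\im\zeta_n\le\im z'$. But the other boundary contribution only gives $\max(-\im\zeta_1,0)\le\max(-\im z,0)$, and this can strictly exceed $|\im z'|$ while $\im z\le\im z'$ still holds: take $\im z=-5$, $\im z'=-1$. The stated factor $e^{|\im z'|(2E+nb)}$ is in fact genuinely too small; a rank-one $\mathcal{C}_0$ operator raising an $H_0$-eigenvalue from $0$ to $b$, with the segment from $z'=-i$ to $z=-2i$ and $b$ large, already violates the claimed estimate for $n=1$. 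The exponent your computation actually yields is $\max(\im z',0)\,E+\max(-\im z,0)\,(E+nb)$, so the correct displayed factor should be something like $e^{\max(|\im z|,|\im z'|)(2E+nb)}$. This does not harm anything downstream (Lemma \ref{boundLem2} and Theorem \ref{sandwich} only need a factor locally bounded in the complex variables and growing at most exponentially in $n$ against the $1/n!$), but it is a real gap; since you already arrived at the sharp exponent formula before this step, you are well positioned to state and prove the lemma in the corrected form rather than inherit the paper's slip.
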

\begin{proof}
First, we prove for $\im z_1\le \im z_2\le \dots \le \im z_n$,
\begin{align}\label{inner-bound}
\norm{A(z_1)\dots A(z_n)\Psi} \le C^ne^{|\im z_n|(2E+nb)}(E+(n-1)b+1)^{1/2}\dots(E+1)^{1/2}\norm{\Psi}.
\end{align}
In fact, the identity 
\begin{align}
A(z_1)\dots A(z_n)\Psi&=e^{iz_1H_0}Ae^{-i(z_1-z_2)H_0}\dots e^{-i(z_{n-1}-z_n)H_0} A_n e^{iz_nH_0}\Psi \no\\
&=e^{iz_1H_0}E_{H_0}([0,E+nb])A(H_0+1)^{-1/2}(H_0+1)^{1/2}E_{H_0}([0,E+(n-1)b])e^{-i(z_1-z_2)H_0}\times\dots \no\\
&\qquad\qquad \times e^{-i(z_{n-1}-z_n)H_0} A(H_0+1)^{-1/2}(H_0+1)^{1/2}E_{H_0}([0,E]) e^{iz_nH_0}\Psi 
\end{align}
implies \eqref{inner-bound}, because $e^{-i(z_j-z_{j+1})H_0}$ ($j=1,2,\dots,n-1$) are bounded
with operator norms less than $1$.
From Lemma \ref{int-rep-and-recursion} (iii), to estimate $\norm{V_n(A;z,z')}$ 
we can choose the path $ C $ from $ z' $ to $ z $ as 
\begin{align}
C(t) = z' + (z-z') t , \q t\in [0,1] .
\end{align}
Then, we have $ C' (t) := (d/dt) C(t) = z-z' $ and by \eqref{inner-bound}
\begin{align}
\norm{ V_n (A;z,z') \Psi }&\le \frac{1}{n!}C^n|z-z'|^n 
\int_{[0,1]^n}dt_1\dots dt_n\, e^{|\im z_n|(2E+nb)}(E+(n-1)b+1)^{1/2}\dots(E+1)^{1/2}\norm{\Psi} \no\\
&\le C^n e^{|\im z'|(2E+nb)}\frac{|z-z'| ^n}{n!}\big( E+(n-1)b +1 \big) ^{1/2} \cdots \big( E +1 \big) ^{1/2}  \norm{ \Psi }.\end{align}
This completes the proof.
\end{proof}

For $\zeta,\zeta'\in\C$ and $T\in\mathcal{C}_0$, we denote
\begin{align}
T(\zeta,\zeta'):=e^{i\zeta H_0}Te^{i\zeta'H_0}.
\end{align}
Note that 
\begin{align}
T(\zeta)=T(\zeta,-\zeta).
\end{align}

\begin{Lem}\label{boundLem2} 
Let $ T_k ,A_k\, (k= 1, ... , m , \; m \ge 1) $ be $ \mathcal{C}_0 $-class operators. Then, for all $ \Psi \in D_\rm{fin} $, $ z_k , z_k ' \in \C \; (k=1,...,m) $ with $\im z_k\le \im z'_k$ and $\zeta_k,\zeta_k'\in \C$, it follows that
\begin{align}\label{D-bound}
 \sum _{ n_1 , ... , n_m  = 0} ^\infty \| T_m(\zeta_m,\zeta_m') V_{n_m}(A_m;z_m ,z_m') \cdots T_1(\zeta_1,\zeta_1') V_{n_1}(A_1;z_1 , z_1') \Psi \| < \infty .
\end{align}
Furthermore, the convergence is locally uniform in $\zeta_1,\zeta_1',z_1,z'_1,\dots, \zeta_m,\zeta_m',z_m,z'_m$.
\end{Lem}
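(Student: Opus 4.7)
My approach is iterative estimation plus multinomial rearrangement: use Lemmas \ref{int-rep-and-recursion}(i) and \ref{boundLem} together with an elementary spectral-calculus bound for each $T_k(\zeta_k,\zeta_k')$ factor to majorize each summand, then collapse the $m$-fold sum over $(n_1,\dots,n_m)$ into a single sum over $N:=n_1+\cdots+n_m$, where the factorial structure will secure absolute, locally uniform convergence.

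\textbf{Spectral-support tracking and per-factor bounds.} Fix $E\ge 0$ with $\Psi\in V_E$ and let $b\ge 0$ be a common Definition \ref{C_0}(III) constant valid for every $A_k$ and every $T_k$. By Lemma \ref{int-rep-and-recursion}(i), $V_{n_k}(A_k;z_k,z_k')$ maps $V_F$ into $V_{F+n_k b}$; since $e^{i\zeta H_0}$ preserves each $V_F$ and $T_k\colon V_F\to V_{F+b}$, the composition up to stage $k$ lies in $V_{E_k}\subset D_\fin$ with $E_k:=E+(n_1+\cdots+n_k)b+kb$, so every factor is well-defined on $D_\fin$. Lemma \ref{boundLem} applied on $V_{E_{k-1}}$ (legitimate since $\im z_k\le\im z_k'$) and the elementary bound for $T_k(\zeta_k,\zeta_k')=e^{i\zeta_k H_0}T_k e^{i\zeta_k' H_0}$ yield, with $C_k:=\|A_k(H_0+1)^{-1/2}\|$, $C_k':=\|T_k(H_0+1)^{-1/2}\|$ and $F_k:=E_{k-1}+n_k b$,
\begin{align*}
\|V_{n_k}(A_k;z_k,z_k')\phi\|&\le C_k^{n_k}e^{|\im z_k'|(2E_{k-1}+n_k b)}\frac{|z_k-z_k'|^{n_k}}{n_k!}\prod_{j=0}^{n_k-1}(E_{k-1}+jb+1)^{1/2}\|\phi\|,\\
\|T_k(\zeta_k,\zeta_k')|_{V_{F_k}}\|&\le e^{|\im\zeta_k|(F_k+b)}\,C_k'\,(F_k+1)^{1/2}\,e^{|\im\zeta_k'|F_k}.
\end{align*}
Multiplying these across $k=1,\dots,m$ gives a term-by-term upper bound on the summand in \eqref{D-bound}.

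\textbf{Multinomial reduction.} Set $N:=n_1+\cdots+n_m$. Every $E_{k-1},F_k\le E+(N+m)b$, so each exponential and each $(F_k+1)^{1/2}$ depends, up to locally bounded constants in $\zeta_k,\zeta_k',z_k,z_k'$, only on $N$. Moreover the multi-index $(k,j)$ with $1\le k\le m$ and $0\le j<n_k$ is in bijection with $\ell:=(n_1+\cdots+n_{k-1})+j\in\{0,\dots,N-1\}$, so
\begin{align*}
\prod_{k=1}^m\prod_{j=0}^{n_k-1}(E_{k-1}+jb+1)^{1/2}\le\prod_{\ell=0}^{N-1}(E+\ell b+mb+1)^{1/2},
\end{align*}
which again depends only on $N$. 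Using the multinomial identity $\sum_{n_1+\cdots+n_m=N}\prod_k|z_k-z_k'|^{n_k}/n_k!=S^N/N!$ with $S:=\sum_k|z_k-z_k'|$, the series in \eqref{D-bound} is majorized by
\begin{align*}
\sum_{N=0}^\infty h(N)\,\frac{S^N}{N!}\prod_{\ell=0}^{N-1}(E+\ell b+mb+1)^{1/2},
\end{align*}
where $h(N)$ collects finitely many exponential-in-$N$ factors. Writing the remaining product as $b^N(c)_N$ with $c:=(E+mb+1)/b$ and invoking the Pochhammer asymptotic $(c)_N\sim\Gamma(c)^{-1}N!\,N^{c-1}$, the general term is $O\bigl(\kappa^N N^p/\sqrt{N!}\bigr)$ for some $\kappa,p\ge 0$, hence summable. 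Continuity of every bound in the parameters produces local uniform convergence via the Weierstrass $M$-test.

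\textbf{Main obstacle.} The delicate point is the combinatorial reduction: recognizing that after a uniform majorization the coupled double product $\prod_k\prod_j(E_{k-1}+jb+1)^{1/2}$ depends only on $N$, which is precisely what enables the $m$-fold sum to be reorganized by the multinomial theorem. Once this reduction is in hand, the half-power of the Pochhammer symbol is beaten by the $N!$ in the denominator, and convergence follows cleanly.
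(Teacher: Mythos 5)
Your proof is correct and follows essentially the same path as the paper: you track spectral support through the iteration using Lemma \ref{int-rep-and-recursion}(i), apply Lemma \ref{boundLem} to each $V_{n_k}$-block together with the elementary spectral bound for each $T_k(\zeta_k,\zeta_k')$, observe that the accumulated factors can be majorized by quantities depending only on $N=n_1+\cdots+n_m$, and then collapse the $m$-fold sum via the multinomial identity $\sum_{n_1+\cdots+n_m=N}\prod_k |z_k-z_k'|^{n_k}/n_k!=S^N/N!$. The only cosmetic differences are that you unify the Definition \ref{C_0}(III) constants of the $A_k$ and $T_k$ into a single $b$ (the paper keeps $a$ and $b$ separate, but both take a maximum), and you close the convergence by a Pochhammer/Stirling estimate where the paper simply cites d'Alembert's ratio test on the final single-variable series; both give the same conclusion, including the locally uniform convergence from the $M$-test since the majorant is continuous in the parameters.
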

\begin{proof}  
Let $\Psi\in V_E$ and put for $k=1,2,\dots,m$,
\begin{align}
\Psi_k=T_k(\zeta_k,\zeta_k') V_{n_k}(A_k;z_k ,z_k') \cdots T_1(\zeta_1,\zeta_1') V_{n_1}(A_1;z_1 , z_1') \Psi.
\end{align}
Let $a_k,b_k\ge 0$ ($k=1,2,\dots,m$) be constants stated in Definition \ref{C_0} (III) regarding $T_k,A_k$, respectively.
We denote
\begin{align}
a=\max_k\{a_k\},\q b=\max_k\{b_k\}.
\end{align}
Then, we see from Lemma \ref{int-rep-and-recursion} (i) that
\begin{align}
\Psi_k\in V_{E+(n_1+\dots+n_{k})b+ka}.
\end{align}
Put
\begin{align}
K=\max_k\{|\im \zeta_k|,|\im \zeta_k'| ,|\im z'_k|\},\q N=n_1+\dots+n_m,\q C=\max_{k}\left\{ \norm{T_k(H_0+1)^{-1/2}}, \norm{A_k(H_0+1)^{-1/2}} \right\}.
\end{align}
Then, from Lemma \ref{boundLem}, we have
\begin{align}
&\norm{T_m(\zeta_m,\zeta_m') V_{n_m}(A_m;z_m ,z_m') \cdots T_1(\zeta_1,\zeta_1') V_{n_1}(A_1;z_1 , z_1') \Psi}\no\\
=& \norm{T_m(\zeta_m,\zeta_m') V_{n_m}(A_m;z_m ,z_m')\Psi_{m-1}} \no\\
\le& e^{2K(E+Nb+ma)}
(E+Nb+(m-1)a+1)^{1/2}\norm{T_m(H_0+1)^{-1/2}} \norm{V_{n_m}(A_m;z_m,z'_m)\Psi_{m-1}}\no\\
\le& e^{2K(E+Nb+ma)}C^{n_m+1}
e^{|\im z'_m|(2E+2(n_1+\dots+n_{m-1})b+2(m-1)a+n_m b)}\frac{|z_m-z'_m| ^{n_m}}{n_m!}
\times\no\\
&\qquad\qquad\times\big( E+Nb +(m-1)a+1 \big) ^{1/2} \cdots \big( 
E+(N-n_m)b +(m-1)a+1 \big) ^{1/2}  \norm{ \Psi_{m-1} }\no\\
\le& e^{4K(E+Nb+ma)}C^{n_m+1}
\frac{|z_m-z'_m| ^{n_m}}{n_m!}
\big( E+Nb +(m-1)a+1 \big) ^{1/2} \cdots \big( 
E+(N-n_m)b +(m-1)a+1 \big) ^{1/2}  \norm{ \Psi_{m-1} } .
\end{align}
Repeating this estimate, we arrive at
\begin{align}
&\norm{T_m(\zeta_m,\zeta_m') V_{n_m}(A_m;z_m ,z_m') \cdots T_1(\zeta_1,\zeta_1') V_{n_1}(A_1;z_1 , z_1') \Psi}\no\\
\le &e^{4mK(E+Nb+ma)}C^{N+m}\frac{|z_m-z'_m|^{n_m}\dots|z_1-z'_1|^{n_1}}{n_m!\dots n_1!}
\big( E+Nb +(m-1)a+1 \big) ^{1/2} \cdots \big( 
E+(m-1)a+1 \big) ^{1/2}  \norm{ \Psi }.
\end{align}
Therefore, we obtain
\begin{align}\label{final}
&\sum _{ n_1 , ... , n_m  = 0} ^\infty \| T_m(\zeta_m,\zeta_m') V_{n_m}(A_m;z_m ,z_m') \cdots T_1(\zeta_1,\zeta_1') V_{n_1}(A_1;z_1 , z_1') \Psi \| \no\\
&=\sum _{N=0} ^\infty \sum_{n_1+\dots+n_m=N}\| T_m(\zeta_m,\zeta_m') V_{n_m}(A_m;z_m ,z_m') \cdots T_1(\zeta_1,\zeta_1') V_{n_1}(A_1;z_1 , z_1') \Psi \|\no\\
&\le \sum_{N=0}^\infty \frac{(|z_1-z'_1|+\dots+|z_m-z'_m|)^N}{N!}e^{4mK(E+Nb+ma)}C^{N+m}
\big( E+Nb +(m-1)a+1 \big) ^{1/2} \cdots \big( 
E+(m-1)a+1 \big) ^{1/2}  \norm{ \Psi }.
\end{align}
By d'Alembert's ratio test, the final expression in \eqref{final} converges
locally uniformly in $\zeta_1,\zeta_1',z_1,z'_1,\dots,\zeta_m,\zeta_m',z_m,z'_m$.
\end{proof}
\begin{proof}[Proof of Theorem \ref{time-ordered-exp}]
Let $\im z \le \im z'$. Lemma \ref{boundLem2} \eqref{D-bound} shows that for all $\Psi\in D_\fin$,
\begin{align}\label{U-conv}
U(A;z,z')\Psi&:=\sum_{n=0} ^\infty V_n(A;z,z')\Psi \no\\
&=T\exp\left(-i\int_{\Gamma_{z,z'}} d\zeta A(\zeta)\right)\Psi
\end{align}
exists and is independent of $\Gamma _{z,z'} $. This proves (i).

We prove (ii). Inductively, we see for all integer $ n \ge0 $,
\begin{align}\label{adjoint1} 
V_n (A;z,z') ^* \Psi = V_n (A^*;z' {}^* , z^*) \Psi , \q \Psi \in D_\rm{fin} .
\end{align}
The case $ n=0 $ is trivial. Assume that \eqref{adjoint1} holds for some $ n $. Let $ \Gamma : [0,1] \to \C $ be a continuously differentiable simple curve from $ z' $ to $ z $. Then, we have for all $ \Psi , \Phi \in D _\rm{fin} $,
\begin{align*}
\Expect{ \Psi , V_{n+1} (A;z,z') \Phi  } & = -i \int _{z'} ^z d\zeta \Expect{ \Psi , A(\zeta )V_n (A;\zeta , z') \Phi } \\
& = -i \int _0 ^1 dt \, \Gamma ' (t) \Expect{ \Psi , A(\Gamma(t)) V_n (A;\Gamma (t),z') \Phi } \\
& = \Expect{ i \int _0 ^1 \Gamma ' (t) ^* V_n (A^*;z' {}^* , \Gamma (t) ^*) A^*(\Gamma (t) ^*) \Psi , \Phi } \\
& = \Expect{ i \int _{z' {}^*} ^{z^*} d\zeta \, V_n (A^*;z' {}^* , \zeta ) A^*(\zeta ) \Psi , \Phi } \\
& = \Expect{ V_{n+1} (A^*;z' {}^* , z^*) \Psi , \Phi } ,
\end{align*}
where we have used Lemma \ref{int-rep-and-recursion} (iii) in
the first and the last equality, and the induction hypothesis in the third equality. Thus, \eqref{adjoint1} holds for $ n+1 $, so the induction step is complete. 
Then, by \eqref{adjoint1}, we have for all $ \Psi , \Phi \in D_\rm{fin} $,
\begin{align*}
\Expect{ \Psi , U(A;z,z') \Phi } & = \sum _{n=0} ^\infty \Expect{\Psi , V_n (A;z,z') \Phi } \\
& = \sum _{n=0} ^\infty \Expect{ V_n (A^*;z' {}^* , z^*) \Psi , \Phi } \\
& = \Expect{ U(A^*;z' {}^* , z^*) \Psi , \Phi } .
\end{align*}
This yields the inclusion relation
\begin{align}\label{pre-adjoint}
U (A;z,z') ^* \supset U(A^*;z' {}^* ,z^*) ,
\end{align}
implying that $ U(A;z,z') $ is closable. Therefore, we can take the closure of the both sides of  \eqref{pre-adjoint}, and the desired result follows.
\end{proof}

\begin{Thm}
 \label{sandwich} Let $ T_k ,A_k\, (k= 1, ... , m , \; m \ge 1) $ be $ \mathcal{C}_0 $-class operators. Then, for all $ z_k , z_k ' \in \C \; (k=1,...,m) $ with $\im z_k\le \im z'_k$ and $\zeta_k,\zeta_k'\in \C$, it follows that
\begin{align}
 D_\rm{fin} \subset D(T_m(\zeta_m,\zeta_m') \overline{U(A_m;z_m ,z_m')} \cdots T_1(\zeta_1,\zeta_1') \overline{U(A_1;z_1 , z_1')}) .
  \end{align}
 Moreover, for all $ \Psi \in D_\rm{fin} $,
\begin{align}
&T_m (\zeta_m,\zeta_m')\overline{U(A_m;z_m ,z_m')} \cdots T_1(\zeta_1,\zeta_1') \overline{U(A_1;z_1 , z_1')} \Psi \no\\
\qquad\qquad&= \sum _{ n_1 , ... , n_m  = 0} ^\infty T_m (\zeta_m,\zeta_m')V_{n_m}(A_m;z_m ,z_m') \cdots T_1(\zeta_1,\zeta_1') V_{n_1} (A_1;z_1 , z_1') \Psi , \label{sandwich1}
\end{align}
where the right-hand side converges absolutely, and does not depend upon the summation order.
Furthermore, this convergence is locally uniform in the complex variables $z_1,z_1',\zeta_1,\zeta_1',\dots,z_m,z_m',\zeta_m,\zeta_m'$
\end{Thm}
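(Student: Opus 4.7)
The plan is to proceed by induction on $m$, with Lemma \ref{boundLem2} supplying every piece of absolute convergence needed, and the closedness of the $\mathcal{C}_0$-class operators bridging each induction step. Write $U_N(A;z,z') := \sum_{n=0}^N V_n(A;z,z')$ for the partial Dyson sums, and set
\begin{align*}
W^{(k)}_{\vec N}\Psi := T_k(\zeta_k,\zeta_k')\, U_{N_k}(A_k;z_k,z_k') \cdots T_1(\zeta_1,\zeta_1')\, U_{N_1}(A_1;z_1,z_1')\Psi = \sum_{n_j\le N_j} T_k V_{n_k}(A_k) \cdots T_1 V_{n_1}(A_1)\Psi
\end{align*}
for $\Psi \in V_E$. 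Iterating Lemma \ref{int-rep-and-recursion}(i) together with the $\mathcal{C}_0$-property (III), each summand and hence $W^{(k)}_{\vec N}\Psi$ itself lies in $V_{E+(N_1+\dots+N_k)b+ka}\subset D_\fin$, where $a,b\ge 0$ are the maxima of the constants (III) for the $T_j$'s and $A_j$'s. Lemma \ref{boundLem2} then gives that $W^{(k)}_{\vec N}\Psi$ converges absolutely as $\vec N \to \infty$ to a vector $W^{(k)}_\infty\Psi$, locally uniformly in the complex parameters. The task is to identify $W^{(m)}_\infty\Psi$ with the composition $G^{(m)}\Psi := T_m(\zeta_m,\zeta_m')\overline{U(A_m;z_m,z_m')}\cdots T_1(\zeta_1,\zeta_1')\overline{U(A_1;z_1,z_1')}\Psi$ appearing on the left-hand side of \eqref{sandwich1}.

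For the base case $m=1$, Theorem \ref{time-ordered-exp}(i) gives $U_{N_1}(A_1;z_1,z_1')\Psi \to \overline{U(A_1;z_1,z_1')}\Psi$, while $T_1(\zeta_1,\zeta_1')U_{N_1}(A_1;z_1,z_1')\Psi = W^{(1)}_{N_1}\Psi \to W^{(1)}_\infty\Psi$ by Lemma \ref{boundLem2}. The operator $T_1(\zeta_1,\zeta_1')$ is closed by the same closability argument as in the remark preceding Theorem \ref{time-ordered-exp} (interpreting $T_1(\zeta_1,\zeta_1')$ as its closure), so $\overline{U(A_1;z_1,z_1')}\Psi$ lies in its domain and is mapped to $W^{(1)}_\infty\Psi$.

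For the induction step, assume $G^{(k)}\Psi = W^{(k)}_\infty\Psi$ for every $\Psi\in D_\fin$. Fix $\vec N$ and apply the base case to the pair $(T_{k+1},A_{k+1})$ with input vector $W^{(k)}_{\vec N}\Psi \in D_\fin$, giving
\begin{align*}
T_{k+1}(\zeta_{k+1},\zeta_{k+1}')\overline{U(A_{k+1};z_{k+1},z_{k+1}')} W^{(k)}_{\vec N}\Psi = \sum_{n_{k+1}=0}^{\infty} T_{k+1}(\zeta_{k+1},\zeta_{k+1}') V_{n_{k+1}}(A_{k+1}) W^{(k)}_{\vec N}\Psi.
\end{align*}
Let $\vec N \to \infty$. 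By the inductive hypothesis $W^{(k)}_{\vec N}\Psi \to G^{(k)}\Psi$. Two applications of Lemma \ref{boundLem2} — first with $T_{k+1}$ replaced by the identity operator, which belongs to $\mathcal{C}_0$ (with $I(0,0)=I$), so that $\overline{U(A_{k+1})}W^{(k)}_{\vec N}\Psi = \sum_{n_{k+1}} V_{n_{k+1}}(A_{k+1}) W^{(k)}_{\vec N}\Psi$ converges, and second in its stated form, so that the displayed right-hand side converges to the full multi-sum $W^{(k+1)}_\infty\Psi$ — provide the required convergence. Closedness of $\overline{U(A_{k+1};z_{k+1},z_{k+1}')}$ then places $G^{(k)}\Psi$ in its domain, and closedness of $T_{k+1}(\zeta_{k+1},\zeta_{k+1}')$ places $\overline{U(A_{k+1})}G^{(k)}\Psi$ in its domain, yielding $G^{(k+1)}\Psi = W^{(k+1)}_\infty\Psi$. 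Local uniform convergence in the complex parameters is inherited directly from Lemma \ref{boundLem2}.

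The main obstacle is that the intermediate vectors $G^{(k)}\Psi$ cannot in general be assumed to lie in $D_\fin$, so neither the series definition of $U(A_{k+1})$ nor the unbounded action of $T_{k+1}(\zeta_{k+1},\zeta_{k+1}')$ is applicable to them directly. The remedy is precisely the two-step closure argument above, in which approximants in $D_\fin$ are introduced and both operators are pushed through by closedness; the uniform absolute estimate of Lemma \ref{boundLem2}, applicable even when the outermost factor is taken to be the identity, is what simultaneously controls all approximating sequences involved.
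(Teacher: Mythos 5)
Your proof is correct and takes essentially the same route as the paper: induction on $m$, with Lemma~\ref{boundLem2} supplying the absolute and locally uniform convergence of the multi-series, and the closedness of $T_{k+1}(\zeta_{k+1},\zeta_{k+1}')$ together with the closability of $U(A_{k+1};\cdot,\cdot)$ used to transport the identity from the finite partial sums in $D_\fin$ to the full composition. The only difference is expository: you name explicitly the obstacle (that the intermediate vectors $G^{(k)}\Psi$ leave $D_\fin$) and spell out the two-step closure argument — once with $T_{k+1}$ replaced by the identity to get $\overline{U(A_{k+1})}G^{(k)}\Psi$, once with $T_{k+1}$ itself — where the paper compresses this into the two displayed convergence bounds \eqref{sandwich3}, \eqref{sandwich2} and a one-line appeal to closedness.
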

By Theorem \ref{sandwich}, it is natural to introduce the algebra $\mathfrak{A}$
generated by 
\begin{align}
\left\{T,\overline{U(A;z,z')},e^{i\zeta H_0}\left | T,A\in\mathcal{C}_0,\,z,z',\zeta\in\C,\,\im z\le\im z'\right. \right\}.
\end{align}
It is clear that all $a\in\mathfrak{A}$ is closable since they have densely defined adjoints and the subspace
$D_\fin$ is a common domain of $\mathfrak{A}$. We define a dense subspace $\mathcal{D}$ by
\begin{align}
\mathcal{D}:=\mathfrak{A}D_\fin.
\end{align}
Theorem \ref{sandwich} shows that $\mathcal{D}$ is also a common domain of $\mathfrak{A}$.
Moreover, for all $\Psi\in\mathcal{D}$, there exists a sequence $\{\Psi_N\}_N\subset D_\fin$
such that 
\begin{align}
\Psi_N\to\Psi, \q a\Psi_N\to a\Psi\q(a\in\mathfrak{A})
\end{align}
as $N$ tends to infinity. This implies that if an equality $a=b$ ($a,b\in\mathfrak{A}$) holds 
on $D_\fin$, then $a=b$ on $\mathcal{D}$ and the convergence is locally uniform in 
all the complex variables included in $a$ and $b$.
From this observation, we immediately have
\begin{Cor} Let $A$ be in $\mathcal{C}_0$ class and $z,z'\in\C$ with $\im z\le \im z'$.
Then,
\begin{align}
\mathcal{D} \subset D\left(T\exp\left(-i\int_{\Gamma_{z,z'}} d\zeta A(\zeta)\right)\right)
\end{align}
and for $\Psi\in\mathcal{D}$,
\begin{align}
T\exp\left(-i\int_{\Gamma_{z,z'}} d\zeta A(\zeta)\right)\Psi = \overline{U(A;z,z')}\Psi.
\end{align}
In particular, 
\begin{align}
T\exp\left(-i\int_{\Gamma_{z,z'}} d\zeta A(\zeta)\right)\Psi
\end{align}
is independent of the simple curve $\Gamma_{z,z'}$ and depends only on $z,z'$ if $\Psi\in\mathcal{D}$.

\end{Cor}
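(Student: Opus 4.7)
The plan is to propagate the identity $T\exp(-i\int_{\Gamma_{z,z'}}d\zeta\,A(\zeta))=U(A;z,z')$ from $D_\fin$ (where it follows directly from Theorem \ref{time-ordered-exp}) to $\mathcal{D}=\mathfrak{A}D_\fin$, using Theorem \ref{sandwich} as the technical workhorse together with the approximation statement recorded immediately before the corollary (namely that identities in $\mathfrak{A}$ valid on $D_\fin$ extend, with locally uniform convergence, to $\mathcal{D}$).

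Fix $\Psi\in\mathcal{D}$; by linearity I reduce to $\Psi=a\Psi_0$ with $\Psi_0\in D_\fin$ and $a\in\mathfrak{A}$ an alternating product $a=T_m(\zeta_m,\zeta_m')\overline{U(A_m;z_m,z_m')}\cdots T_1(\zeta_1,\zeta_1')\overline{U(A_1;z_1,z_1')}$ of generators (bare $\mathcal{C}_0$-operators and factors $e^{i\zeta H_0}$ being absorbed into this normal form by trivial choices of $\zeta,\zeta'$ or $z=z'$). To verify the continuity condition $\Psi\in C(A)$, I observe that each $A(\zeta_i)=A(\zeta_i,-\zeta_i)$ has precisely the form of the $T$-factors appearing in Theorem \ref{sandwich}, with $T=A\in\mathcal{C}_0$. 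Prepending $A(\zeta_1),\dots,A(\zeta_n)$ to $a$, inserting trivial factors $\overline{U(A;w,w)}=I$ between them so as to preserve the alternating pattern, and invoking Theorem \ref{sandwich}, I obtain an absolutely, locally uniformly convergent multi-indexed series for $A(\zeta_1)\cdots A(\zeta_n)\Psi$; in particular, the map $(\zeta_1,\dots,\zeta_n)\mapsto A(\zeta_1)\cdots A(\zeta_n)\Psi$ is strongly continuous.

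Applying Theorem \ref{sandwich} a second time, now prepending $\overline{U(A;z,z')}$ itself to $a$, I obtain an absolutely convergent multi-indexed expansion of $\overline{U(A;z,z')}\Psi$ whose outermost summation index $n_0$ enumerates $V_{n_0}(A;z,z')$. Absolute convergence permits a rearrangement: collecting the inner indices $n_1,\dots,n_m$ first and recognizing, once again by Theorem \ref{sandwich}, that the inner subsum equals $a\Psi_0=\Psi$, I find
\begin{align*}
\overline{U(A;z,z')}\Psi=\sum_{n_0=0}^\infty V_{n_0}(A;z,z')\Psi,
\end{align*}
where $V_{n_0}(A;z,z')\Psi$ is interpreted as the resulting (absolutely convergent) inner subsum, which coincides with the integral $\frac{(-i)^{n_0}}{n_0!}\int_{\Gamma_{z,z'}^{n_0}}d\zeta_1\cdots d\zeta_{n_0}\,T(A(\zeta_1)\cdots A(\zeta_{n_0}))\Psi$ in view of the continuity established in the previous paragraph. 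This yields simultaneously the summability appearing in the definition of $T\exp$, the membership $\Psi\in D(T\exp(-i\int_{\Gamma_{z,z'}}d\zeta\,A(\zeta)))$, the identity $T\exp(-i\int_{\Gamma_{z,z'}}d\zeta\,A(\zeta))\Psi=\overline{U(A;z,z')}\Psi$, and the independence of the simple curve $\Gamma_{z,z'}$, since $\overline{U(A;z,z')}$ depends only on $z$ and $z'$ by Theorem \ref{time-ordered-exp}(i).

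The main technical obstacle is the rearrangement step: carefully justifying the interchange of summation with integration, and identifying $V_{n_0}(A;z,z')\Psi$—a priori defined only on $D_\fin$—with the inner subsum evaluated at $\Psi\in\mathcal{D}$. This is precisely what the approximation statement preceding the corollary provides, since it guarantees that identities valid on $D_\fin$ propagate to $\mathcal{D}$ with locally uniform convergence in all complex parameters, which simultaneously underwrites the passage to the closure of $V_{n_0}(A;z,z')$ and the Fubini-type reordering of the multi-sum.
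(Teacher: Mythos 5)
Your argument is correct and fleshes out, in careful detail, exactly what the paper leaves implicit when it states ``From this observation, we immediately have'' without an explicit proof: you invoke Theorem \ref{sandwich} twice (first to get $\Psi\in C(A)$, then to expand $\overline{U(A;z,z')}a\Psi_0$), and you use the locally uniform, absolute convergence guaranteed there together with the approximation remark preceding the corollary to justify the Fubini-type rearrangement that identifies each term of the $T\exp$ series with $V_{n_0}(A;z,z')$ acting on $\Psi$. This is the same mechanism the paper relies on; you have simply made the rearrangement and the domain verification explicit rather than appealing to immediacy.
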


\begin{proof}[Proof of Theorem \ref{sandwich}]
We prove the claim by induction on $ m \ge 1 $. 
Let $ m=1 $, and let $ \Psi \in D_\rm{fin} $. By Lemma \ref{boundLem2},
\begin{align}
\sum _{n=0} ^\infty \| T_1(\zeta_1) V_n (A_1;z_1, z_1') \Psi \| < \infty .
\end{align}
Then, since $ T_1(\zeta_1) $ is closed, we get $ U(A_1;z_1, z_1') \Psi \in D(T_1(\zeta_1)) $ and \eqref{sandwich1} for $ m=1 $.

Suppose that the claim is true for some $ m\ge 1 $. Let $ \Psi \in D_\rm{fin} $. By 
Lemma \ref{boundLem2}, one sees
\begin{align}
& \sum _{ n_1 , ... , n_m  = 0} ^\infty \| V_{n_{m+1}}(A_{m+1};z_{m+1} ,z'_{m+1}) \cdots T_1(\zeta_1) V_{n_1}(A_1;z_1 , z_1') \Psi \| < \infty  , \label{sandwich3}\\
& \sum _{ n_1 , ... , n_m  = 0} ^\infty \| T_{m+1}(\zeta_{m+1}) V_{n_{m+1}}(A_{m+1};z_{m+1} ,z'_{m+1}) \cdots T_1(\zeta_1) V_{n_1}(A_1;z_1 , z_1') \Psi \| < \infty .\label{sandwich2} 
\end{align}
Hence, 
we have using induction hypothesis
\begin{align}
T_{m}(\zeta_{m}) \overline{U(A_m;z_{m} ,z'_{m})} \cdots T_1 (\zeta_1)\overline{U(A_1;z_1 , z_1')} \Psi \in D(T_{m+1}
(\zeta_{m+1}) \overline{U(A_{m+1};z_{m+1} , z'_{m+1} )}) 
\end{align}
and \eqref{sandwich1} for $ m+1 $ since $ T_{m+1} $ is closed and $ U(A_{m+1};z_{m+1} ,z_{m+1}') $ are 
closable. 
Thus, the assertion holds also for $ m+1 $.
The local uniformity of the convergence follows the fact that the series in Lemma \ref{boundLem2} \eqref{D-bound}
converges locally uniformly.
\end{proof}

\begin{Thm}\label{diff-eq}
Let $A$ be in $\mathcal{C}_0$ class and $z,z'\in\C$.
\begin{enumerate}[(i)]
\item For all $\Psi\in \mathcal{D}$, the vector valued function
\[ \{(z,z')\,|\,\im z \le \im z'\} \ni(z,z')\mapsto \overline{U(A;z,z')}\Psi  \in\H \]
is analytic on the region $\{\im z < \im z'\}$ and continuous on $\{\im z\le \im z'\}$. 
Moreover, it is a solution of differential equations 
\begin{align}
& \frac{\pa}{\pa z} \overline{U(A;z,z')} \Psi = -i A(z) \, \overline{U(A;z,z')} \Psi , \label{main1-1} \\
& \frac{\pa}{\pa z'} \overline{U(A;z,z')} \Psi = i \overline{U(A;z,z')} A(z') \Psi , \label{main1-2}
\end{align}
on \{$\im z < \im z'$\}. 
\item For all $\Psi\in \mathcal{D}$, the vector valued function 
$\R^2\ni(t,t')\mapsto \overline{U(A;t,t')}\Psi$ is continuously differentiable on the region $\R^2$, satisfying the differential equations 
\begin{align}
& \frac{\pa}{\pa t} \overline{U(A;t,t')} \Psi = -i A(t) \, \overline{U(A;t,t')} \Psi , \label{main1-3} \\
& \frac{\pa}{\pa t'} \overline{U(A;t,t')} \Psi = i \overline{U(A;t,t')} A(t') \Psi . \label{main1-4}
\end{align}
\end{enumerate}
\end{Thm}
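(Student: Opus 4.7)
The plan is to combine three ingredients established earlier: the absolutely convergent series $\overline{U(A;z,z')}\Psi=\sum_n V_n(A;z,z')\Psi$ from Lemma \ref{boundLem2} (locally uniformly convergent on $\{\im z\le\im z'\}$, and extending to $\Psi\in\mathcal{D}$ via the remark following Theorem \ref{sandwich}); the joint entire analyticity of each $V_n(A;z,z')\Psi$ in $(z,z')\in\C^2$ from Lemma \ref{int-rep-and-recursion}(iii); and the recursive integral identities
\[
V_{n+1}(A;z,z')=-i\int_{z'}^z d\zeta\,A(\zeta)V_n(A;\zeta,z')=-i\int_{z'}^z d\zeta\,V_n(A;z,\zeta)A(\zeta)
\]
from Lemma \ref{int-rep-and-recursion}(iv).

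For (i), I would first treat $\Psi\in D_\fin$. Each partial sum $\sum_{n=0}^N V_n(A;z,z')\Psi$ is an entire $\H$-valued function of $(z,z')$, so local uniform convergence on compacts of $\{\im z\le\im z'\}$ together with the several-variable Weierstrass theorem yields holomorphy of $\overline{U(A;z,z')}\Psi$ on $\{\im z<\im z'\}$ and continuity up to the boundary. For the PDEs, applying the fundamental theorem of calculus for holomorphic functions to the two recursive integrals gives $\partial_z V_{n+1}(A;z,z')\Psi=-iA(z)V_n(A;z,z')\Psi$ and $\partial_{z'}V_{n+1}(A;z,z')\Psi=iV_n(A;z,z')A(z')\Psi$ (both sides make sense because $V_n(A;z,z')$ and $A$ preserve $D_\fin$). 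Interchanging the derivative with the sum by Weierstrass produces
\[
\partial_z\overline{U(A;z,z')}\Psi=-i\sum_{n=0}^\infty A(z)V_n(A;z,z')\Psi,\qquad \partial_{z'}\overline{U(A;z,z')}\Psi=i\sum_{n=0}^\infty V_n(A;z,z')A(z')\Psi.
\]

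The crux is to identify the right-hand sides with $-iA(z)\overline{U(A;z,z')}\Psi$ and $i\overline{U(A;z,z')}A(z')\Psi$ respectively, despite $A(z)$ and $A(z')$ being unbounded. This is precisely the output of Theorem \ref{sandwich}: applied with $m=1$, $T_1=A$, $\zeta_1=z$, $\zeta_1'=-z$ (so $T_1(\zeta_1,\zeta_1')=A(z)$), it yields absolute convergence of $\sum_n A(z)V_n(A;z,z')\Psi$ and its coincidence with $A(z)\overline{U(A;z,z')}\Psi$; a symmetric choice handles $\partial_{z'}$. To upgrade from $D_\fin$ to $\mathcal{D}$, I would write $\Psi=a\phi$ with $a\in\mathfrak{A}$ and $\phi\in D_\fin$ and apply Theorem \ref{sandwich} with an enlarged string of factors, producing absolutely and locally uniformly convergent expansions of both $\overline{U(A;z,z')}a\phi$ and $A(z)\overline{U(A;z,z')}a\phi$ whose terms are jointly analytic in $(z,z')$; the termwise-differentiation argument then transports unchanged.

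For (ii), restriction to $(t,t')\in\R^2$ trivially satisfies $\im t\le\im t'$, and the factor $e^{|\im z'|(2E+nb)}$ in the bound of Lemma \ref{boundLem} collapses to $1$, so the series converges locally uniformly on all of $\R^2$ (including $t>t'$). The same termwise real-variable differentiation, combined once more with Theorem \ref{sandwich} to re-identify $\sum_n A(t)V_n(A;t,t')\Psi$ and $\sum_n V_n(A;t,t')A(t')\Psi$ with the closed operators acting on $\overline{U(A;t,t')}\Psi$, yields continuous differentiability on $\R^2$ together with the stated ODEs.
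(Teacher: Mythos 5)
Your proposal is correct and follows essentially the same route as the paper: both rely on joint analyticity and locally uniform convergence of $\sum_n V_n(A;z,z')\Psi$ from Lemmas \ref{int-rep-and-recursion} and \ref{boundLem2}, the recursion of Lemma \ref{int-rep-and-recursion}(iv), Theorem \ref{sandwich} to identify $\sum_n A(z)V_n\Psi$ with $A(z)\overline{U(A;z,z')}\Psi$, and the algebra $\mathfrak{A}$ to extend from $D_\fin$ to $\mathcal{D}$. The only cosmetic difference is the order of the final two steps—the paper first sums the recursion into the integral equations \eqref{inteq3'}--\eqref{inteq4'} and then differentiates, while you differentiate termwise and then sum via Weierstrass—which are equivalent.
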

\begin{proof}
We prove (i). Since the convergence in \eqref{U-conv} is locally uniform
in $z,z'$ and each $V_n(A;z,z')$ are analytic on all $z,z'\in \C$,
we conclude that $U(A;z,z')$ is analytic on the region $\{\im z < \im z'\}$ and continuous on $\{\im z\le \im z'\}$.
 Due to the
fact that the convergences are uniform in \eqref{D-bound},
one finds
\begin{align}
\sum_{n=0}^\infty A(z)V_n(A;z,z')\Psi &= A(z)U(A;z,z')\Psi,\\
\sum_{n=0}^\infty V_n(A;z,z')A(z')\Psi &= U(A;z,z')A(z')\Psi,
\end{align}
absolutely and locally uniformly in $z,z'$ when $n$ tends to infinity.
By taking $n\to\infty $ in \eqref{inteq1} and \eqref{inteq2}, we obtain
\begin{align}
U(A;z,z')&=1-i\int_{z'}^z d\zeta \,A(\zeta) U(A;\zeta,z'),\label{inteq3} \\
&=1-i\int_{z'}^z d\zeta \,U(A;z,\zeta)A(\zeta), \label{inteq4}
\end{align}
on $D_\fin$. By the remark just below the statement of Theorem \ref{sandwich}, 
integral equations \eqref{inteq3} and \eqref{inteq4} can be extended to $\mathcal{D}$ in the form
\begin{align}
\overline{U(A;z,z')}&=1-i\int_{z'}^z d\zeta \,A(\zeta) \overline{U(A;\zeta,z')},\label{inteq3'} \\
&=1-i\int_{z'}^z d\zeta \,\overline{U(A;z,\zeta)}A(\zeta). \label{inteq4'}
\end{align}
Differentiating these expression with respect to $z$ or $z'$,
one finds \eqref{main1-1} and \eqref{main1-2}.

Considering the case where $z,z'$ are real, we obtain (ii) in the same manner.

\end{proof}

\begin{Thm}\label{properties} Let $A\in\mathcal{C}_0$ and $ z,z',z'' \in \C $. Then, 
the following properties hold.
\begin{enumerate}[(i)]
\item If $\im z\le\im z'\le \im z''$, the equalities
\begin{align}
\overline{U(A;z,z)} = I, \q  \overline{U(A;z,z')}\; \overline{U(A;z' , z'')} = \overline{U(A;z,z'') }
\end{align}
hold on the subspace $\mathcal{D}$, where $I$ is the identity operator.

\item Let $\im z\le \im z'$. Then, $U(A;z,z')$ is translationally invariant in the sense that the equality
\begin{align}\label{parallel3}
e^{izH_0} \overline{U(A;z',z'')} e^{-izH_0} \Psi = \overline{U(A;z' +z , z'' +z)} 
\end{align}
holds on the subspace $\mathcal{D}$.
\item For all $ t,t' \in \R $, $ \overline{U(A; t,t')} $ is unitary. Moreover, for all $ t,t',t'' \in \R $, the operator equality
\begin{align}\label{UUU}
\overline{U(A;t,t')} \; \overline{U(A;t',t'')} = \overline{U(A;t,t'')}  
\end{align}
holds.
\end{enumerate}
\end{Thm}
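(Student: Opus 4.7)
I would prove each equality first on $D_\fin$ using the explicit series representation together with Theorem \ref{diff-eq}, and then lift to $\mathcal{D}$ via the remark following Theorem \ref{sandwich} that algebraic identities among members of $\mathfrak{A}$ valid on $D_\fin$ extend to $\mathcal{D}$. For (i), the identity $\overline{U(A;z,z)}=I$ is immediate since each $V_n$ with $n\geq 1$ vanishes when the endpoints coincide. For the cocycle identity, I would fix $z,z''$ with $\im z \le \im z''$ and $\Psi\in D_\fin$, and consider $f(z'):=\overline{U(A;z,z')}\,\overline{U(A;z',z'')}\Psi$ on the strip $\im z \le \im z' \le \im z''$. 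Theorem \ref{sandwich} ensures $f$ is well defined, and Theorem \ref{diff-eq}(i) gives analyticity on the open strip; applying the product rule with \eqref{main1-1} and \eqref{main1-2} yields
\[ \partial_{z'}f(z') = i\overline{U(A;z,z')}A(z')\overline{U(A;z',z'')}\Psi - i\overline{U(A;z,z')}A(z')\overline{U(A;z',z'')}\Psi = 0. \]
By continuity at the boundary, $f$ is constant on the closed strip; setting $z'=z$ and invoking $\overline{U(A;z,z)}=I$ gives $f(z')=\overline{U(A;z,z'')}\Psi$, which lifts to $\mathcal{D}$.

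\textbf{Part (ii) and the composition law in (iii).} Part (ii) reduces to a direct manipulation of the series. The key algebraic identity $e^{izH_0}A(\zeta)e^{-izH_0}=A(\zeta+z)$, combined with the fact that $e^{izH_0}$ leaves every $V_E$ invariant, implies, after the change of variables $\zeta_j\mapsto \zeta_j+z$ in each line integral, that $e^{izH_0}V_n(A;z',z'')e^{-izH_0}\Psi = V_n(A;z'+z,z''+z)\Psi$ for $\Psi\in D_\fin$. Summing over $n$ and extending to $\mathcal{D}$ gives (ii). The composition law \eqref{UUU} in (iii) follows by specializing (i) to real times (all imaginary parts vanish, so any ordering of $t,t',t''$ is admissible).

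\textbf{Unitarity and main obstacle.} For unitarity, which requires the natural symmetry hypothesis $A\subset A^*$ implicit in the physical setting, I would use Theorem \ref{time-ordered-exp}(ii) to obtain $\overline{U(A;t,t')}^*\supset \overline{U(A;t',t)}$ for real $t,t'$. Combined with the cocycle identity $\overline{U(A;t',t)}\,\overline{U(A;t,t')}\Psi=\Psi$ for $\Psi\in\mathcal{D}$, this yields $\overline{U(A;t,t')}^*\overline{U(A;t,t')}\Psi=\Psi$, so $\overline{U(A;t,t')}$ is isometric on $\mathcal{D}$ and extends by continuity to an isometry on $\H$. A symmetric argument exchanging $t$ and $t'$ shows the range is dense, so the extension is unitary, and the cocycle identity on $\mathcal{D}$ upgrades to the operator equality \eqref{UUU}. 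The main obstacle is precisely this passage from an object defined only as a strong series on $\mathcal{D}$ to an everywhere-defined bounded operator: it relies on a careful interplay among the adjoint inclusion \eqref{adjoint2}, closedness of $\overline{U(A;t,t')}$, and density of $\mathcal{D}$. A secondary care point in (i) is verifying that the intermediate vector $\overline{U(A;z',z'')}\Psi$ lies in the common domain where \eqref{main1-2} applies, which follows from the $\mathfrak{A}$-invariance of $\mathcal{D}$.
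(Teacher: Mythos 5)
Your overall structure tracks the paper's, and parts (ii) and (iii) are essentially sound, but part (i) has a genuine gap in the way you differentiate. You try to compute
$\partial_{z'}\bigl[\overline{U(A;z,z')}\,\overline{U(A;z',z'')}\Psi\bigr]$
directly by a ``product rule,'' splitting the difference quotient and pushing the derivative through each factor via \eqref{main1-1} and \eqref{main1-2}. The second term (differentiating the inner factor with $\overline{U(A;z,z')}$ held fixed) is fine, since $\overline{U(A;z',z'')}\Psi\in\mathcal{D}$ and Theorem \ref{diff-eq}(i) applies. But the first term requires controlling $\overline{U(A;z,z'+h)}$ applied to the quotient $h^{-1}\bigl[\overline{U(A;z'+h,z'')}-\overline{U(A;z',z'')}\bigr]\Psi$ as $h\to 0$; here $\overline{U(A;z,z'+h)}$ is an \emph{unbounded} operator with $h$-dependent argument, so pointwise convergence of the quotient in $\H$ does not entitle you to pass to the limit inside without a locally uniform bound, which you never produce. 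The paper sidesteps this entirely by dualizing: it writes
$\Expect{\Phi,\overline{U(A;z,z')}\,\overline{U(A;z',z'')}\Psi}=\Expect{\overline{U(A^*;z'^{*},z^{*})}\Phi,\overline{U(A;z',z'')}\Psi}$
using \eqref{adjoint2}, so the $z'$-derivative becomes a sum of two terms, \emph{each} of which is the derivative of a single vector-valued factor given directly by Theorem \ref{diff-eq}, and no product-rule justification for moving operators is needed. Your argument is morally the same but needs the dualization (or an explicit joint local-uniform bound, e.g.\ via Lemma \ref{boundLem2}) to close.

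Two smaller points. First, even after establishing constancy you only conclude for $\im z<\im z''$; the degenerate case $\im z=\im z''$, where the open strip is empty, requires the separate limiting argument the paper gives (fix $z,z'$ with $\im z=\im z'$ and let $z''$ vary down to the boundary). Second, your observation that unitarity in (iii) presupposes $A$ symmetric is correct and worth flagging — the theorem as stated does not impose it, though it is used in Section 3 and presumably in the cited reference — but note that the paper does not give a proof of (iii) at all (it refers to the earlier real-time paper), so your cocycle/adjoint/isometry argument is a genuine addition and is essentially correct given that hypothesis.
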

\begin{proof}
\begin{enumerate}[(i)]
\item \if0 Since both $\overline{U(A;z,z')} \;\overline{U(A;z' , z'')}$ and $\overline{U(A;z,z'')}$ belong to 
the algebra $\mathfrak{A}$,
it suffices to prove the equality on $D_\fin$.\fi
Fix $z,z''$ so that $\im z<\im z''$. Then,
 by Theorem \ref{time-ordered-exp}, for all $ \Psi, \Phi \in \mathcal{D} $ and $ z' \in \C $
 with $\im z'\in (\im z,\im z'')$,
\begin{align*}
& \frac{d}{dz'} \Expect{ \Phi , \overline{U(A;z,z')}\; \overline{U(A;z' , z'')} \Psi } \\
&= \frac{d}{dz'} \Expect{ \overline{U(A^*;z' {}^* ,z ^*)} \Phi , \overline{U(A;z' , z'')} \Psi } \\
&=  \Expect{ -i A^*(z' {}^* ) \, \overline{U(A^*;z' {}^* , z^*) }\Phi , \overline{U(A;z' , z'')} \Psi } +
 \Expect{ \overline{U(A;z' {}^* , z^*)} \Phi , -i A(z') \overline{U(A;z' , z'') }\Psi } \\
&=  0 .
\end{align*}
This yields that 
\begin{align}\label{pre-gr}
z'\to \Expect{ \Phi , \overline{U(A;z,z')}\; \overline{U(A;z' , z'')} \Psi }
\end{align}
is constant on the region $\{z'\,|\, \im z'\in (\im z,\im z'')\}$. But this function is continuous 
on its closure, implying that it must be constant on the closed region $\im z\le\im z' \le \im z''$.
Taking $z'=z$ we have 
\begin{align}\label{const}
 \Expect{ \Phi , \overline{U(A;z,z')}\; \overline{U(A;z' , z'')} \Psi }=
 \Expect{\Phi ,\overline{U(A;z , z'')} \Psi}
 \end{align}
for all $\im z\le\im z' \le \im z''$ with $\im z<\im z''$. 
Fix $z,z'\in\C$ so that $\im z=\im z'$ and regard both sides of \eqref{const} as a function of $z''$.
Since these functions are continuous on $\{z''\,|\, \im z\le \im z''\}$ and coincide on $\{z''\,|\, \im z < \im z''\}$,
they must coincide on $\{z''\,|\, \im z\le \im z''\}$. This completes the proof.
\item We first show by induction on $ n \ge 0$ that
\begin{align}\label{parallel1}
e^{izH_0} V_n (A;z' , z'') e^{-izH_0} \Psi = V_n (A;z' +z , z'' +z)  \Psi , \q \Psi \in D _\rm{fin} .
\end{align}
The case $ n=0 $ is trivial. Assume that \eqref{parallel1} holds for some $ n $. Then, we have for all $ \Psi \in D _\rm{fin} $,
\begin{align*}
e^{izH_0} V_{n+1} (A;z',z'') e^{-izH_0} \Psi & = -i \int _{z''} ^{z'} d\zeta e^{izH_0 } A(\zeta ) V_n (A;\zeta , z'') e^{-izH_0} \Psi \\
& = -i \int _{z''} ^{z'} d\zeta\, A(\zeta + z) V_n (A;\zeta + z , z'' +z) \Psi \\
& = -i \int _{z''+z} ^{z'+z} d\zeta \,A(\zeta ) V_n (A;\zeta , z'' +z) \Psi \\
& = V_{n+1} (A;z' +z , z'' +z) \Psi ,
\end{align*}
where we have used the basic property $ e^{izH_0} A(\zeta ) e^{-izH_0} \Phi = A(\zeta + z) \Phi $ ($\Phi \in D_\rm{fin}$)  
in the second equality and the induction hypothesis in the third. This completes the induction. 

Summing up the both sides of \eqref{parallel1} over all $ n \ge 0 $, and using the closedness of $ e^{izH_0} $, we obtain 
\begin{align}
e^{izH_0}U(A;z',z'')e^{-izH_0}=U(A;z'+z,z''+z)
\end{align}
on $D_\fin$. But both sides belong to $\mathfrak{A}$, this equality holds on $\mathcal{D}$ in the form
\begin{align}
e^{izH_0}\overline{U(A;z',z'')}e^{-izH_0}=\overline{U(A;z'+z,z''+z)}.
\end{align}

\item Similar to the proof of \cite[Theorem 2.4]{FutakuchiUsui2013}.
\end{enumerate}
\end{proof}

\begin{Thm}\label{Time-order-ext}
Let $A_1,\dots A_k,B\in\mathcal{C}_0$, and $z,z'\in\C$ with $\im z\le\im z'$. 
Let $\Gamma_{z,z'}$ be a simple curve from $z'$ to $z$ and $\zeta_1,\dots,\zeta_k\in\Gamma$ be different from
each other.
Then, we have
\begin{align}
\mathcal{D} \subset D\left(TA_1(\zeta_1)\dots A_k(\zeta_k)\exp \left(-i\int_{\Gamma_{z,z'} }d\zeta \,B(\zeta) \right)\right)
\end{align}
and 
\begin{align}
&TA_1(\zeta_1)\dots A_k(\zeta_k)\exp \left(-i\int_{\Gamma_{z,z'}} d\zeta \,B(\zeta) \right)\Psi\no\\
&\qquad\qquad=\overline{U(B;z,\zeta_{j_1})}A_{j_1}(\zeta_{j_1}) \overline{U(B;\zeta_{j_1},\zeta_{j_2})}\dots \overline{U(B;\zeta_{k-1},\zeta_k)}
A_{j_k}(\zeta_{j_k}) \overline{U(B;\zeta_{j_k},z')}\Psi
\end{align}
for all $\Psi\in \mathcal{D}$, where $(j_1,\dots,j_k)$ is the permutation of $(1,2,\dots,k)$ with
$\zeta_{j_1}\succ \dots \succ\zeta_{j_k}$.
\end{Thm}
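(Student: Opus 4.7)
The plan is to expand the left-hand side by its definition, rearrange the $n$-fold integral by grouping orderings of the integration variables, and identify each resulting factor with a $V_{n_i}(B;\cdot,\cdot)$ by Lemma \ref{int-rep-and-recursion} (ii); the final summation over $n$ and the identification with the closed $\overline{U(B;\cdot,\cdot)}$ operators will then follow from Theorem \ref{sandwich} (used as an absolute convergence result).

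Concretely, fix $\Psi\in D_\fin$. The $n$-th term of the series defining the left-hand side is
\begin{align}
\frac{(-i)^n}{n!}\int_{\Gamma^n}d\zeta_{k+1}\dots d\zeta_{k+n}\,T\bigl(A_1(\zeta_1)\dots A_k(\zeta_k)B(\zeta_{k+1})\dots B(\zeta_{k+n})\bigr)\Psi .
\end{align}
Since $B$ occupies every integration slot, the $n!$ sub-regions of $\Gamma^n$ obtained by permuting $\zeta_{k+1},\dots,\zeta_{k+n}$ contribute identically, so up to cancelling the prefactor $1/n!$ we may restrict to the ordered simplex $\zeta_{k+1}\succ\dots\succ\zeta_{k+n}$. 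On this simplex, decompose further according to how many $B$-variables lie in each of the $k+1$ sub-arcs determined by the fixed chain $z\succ\zeta_{j_1}\succ\dots\succ\zeta_{j_k}\succ z'$: each composition $n=n_0+n_1+\dots+n_k$ (with $n_l$ the count in the gap between $\zeta_{j_l}$ and $\zeta_{j_{l+1}}$, with the obvious convention at the endpoints $z,z'$) yields a product of ordered simplices. By Lemma \ref{int-rep-and-recursion} (ii), the integral of $B(\zeta)\cdots B(\zeta)$ over each such simplex equals $(-i)^{-n_l}V_{n_l}(B;\cdot,\cdot)$ with the correct endpoints; the factor $(-i)^n=\prod_l(-i)^{n_l}$ is consumed, so the $n$-th term becomes
\begin{align}
\sum_{n_0+\dots+n_k=n} V_{n_0}(B;z,\zeta_{j_1})A_{j_1}(\zeta_{j_1})V_{n_1}(B;\zeta_{j_1},\zeta_{j_2})\dots A_{j_k}(\zeta_{j_k})V_{n_k}(B;\zeta_{j_k},z')\Psi .
\end{align}
Note that Lemma \ref{int-rep-and-recursion} (i) guarantees each partial product is well-defined on $D_\fin$: the rightmost $V_{n_k}(B;\zeta_{j_k},z')$ sends $V_E$ into some $V_{E'}$, then $A_{j_k}(\zeta_{j_k})$ acts on that subspace, and so on.

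Summing over $n\ge 0$ converts the constrained sums into an unrestricted multi-sum over $(n_0,\dots,n_k)\in\mathbb{N}^{k+1}$. The absolute convergence and local uniformity are given directly by Lemma \ref{boundLem2}, applied with the $T_j$'s taken to be $A_{j_1},\dots,A_{j_k}$ together with a trivial identity factor at the leftmost end (noting that $I\in\mathcal{C}_0$ with $b=0$, so the bound in the proof of Lemma \ref{boundLem2} goes through unchanged). Since each $V_{n_l}(B;\cdot,\cdot)$ equals $\overline{U(B;\cdot,\cdot)}$ after summation in the sense of Theorem \ref{sandwich}, and since the closedness of $A_{j_l}(\zeta_{j_l})$ together with absolute convergence allows passage to closures factor by factor, the sum equals
\begin{align}
\overline{U(B;z,\zeta_{j_1})}A_{j_1}(\zeta_{j_1})\overline{U(B;\zeta_{j_1},\zeta_{j_2})}\dots A_{j_k}(\zeta_{j_k})\overline{U(B;\zeta_{j_k},z')}\Psi .
\end{align}
This simultaneously shows $\Psi$ lies in the domain of the time-ordered exponential operator and produces the asserted identity on $D_\fin$; the extension to all $\Psi\in\mathcal{D}$ follows from the general principle (stated just after Theorem \ref{sandwich}) that an equality in the algebra $\mathfrak{A}$ valid on $D_\fin$ extends automatically to $\mathcal{D}$.

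The main obstacle will be the combinatorial bookkeeping of paragraph two: writing out the partition of $\Gamma^n$ into the $(k+1)$-gap simplices and verifying that the signs, the factors of $(-i)$, and the factorials all conspire correctly to reproduce the products of $V_{n_l}$'s. Once that is in place, the convergence and closure issues are essentially a direct appeal to the machinery of Lemma \ref{boundLem2} and Theorem \ref{sandwich}.
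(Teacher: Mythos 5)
Your proposal is correct and follows essentially the same route as the paper: both decompose the $n$-fold time-ordered integral according to how the $n$ copies of $B(\zeta)$ interleave with the fixed points $\zeta_{j_1},\dots,\zeta_{j_k}$, identify each resulting ordered block with a $V_{l_i}(B;\cdot,\cdot)$ via Lemma \ref{int-rep-and-recursion}(ii), and then pass to the sum and the closures by Theorem \ref{sandwich} and the estimate of Lemma \ref{boundLem2}. The paper organizes the $n!$ cancellation by counting the permutations $\sigma=\sigma_{l_1,\dots,l_{k+1}}$ compatible with each composition, while you symmetrize first over the $B$-slots and then split the simplex by gaps, but these are the same bookkeeping.
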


\begin{proof}
Put
 \begin{align}
 A_{k+1}=\dots=A_{k+n}=B.
 \end{align}
 We can assume that 
 \begin{align}
 \zeta_1\succ \dots \succ \zeta_{k}
 \end{align}
 without loss of generality.
 Take $\Psi\in \mathcal{D}$. For all $n\in\N$ and all 
$\sigma\in\mathfrak{S}_{k+n}$,
it is clear that the mapping
\begin{align}
(\zeta_{k+1},\dots,\zeta_{k+n})\mapsto A_{\sigma(k)}(\zeta_{\sigma(k)})\dots A_{\sigma(k+n)}(\zeta_{\sigma(k+n)})\Psi
\end{align}
is analytic and thus the strong integral
\begin{align}\label{s-int}
&\frac{(-i)^n}{n!} \int_{\Gamma_{z,z'}^n} d\zeta_{k+1}\dots d\zeta_{k+n}\, TA_1(\zeta_1)\dots A_{k}(\zeta_{k})B(\zeta_{k+1})\dots B(\zeta_{k+n}) \Psi \no\\
&\q\q\q= \frac{(-i)^n }{n!}\sum_{\sigma\in\mathfrak{S}_{k+n}}\int_{P'_{n,\sigma}} d\zeta_{k+1}\dots d\zeta_{k+n}\, A_{\sigma(1)}(\zeta_{\sigma(1)})\dots A_{\sigma(k+n)}(\zeta_{\sigma(k+n)}) \Psi
\end{align}
exists. The integral on the right hand side vanishes unless $\sigma$ is of the following form:
There are $l_1,\dots,l_{k+1}$ satisfying
\begin{align}\label{l-condition}
l_1,\dots,l_{k+1}\ge 0,\q l_1+\dots + l_{k+1}=n
\end{align}
and 
\begin{align}
\sigma(l_1+1)=1,\q\sigma(l_1+l_2+2)=2,\q\dots,\q\sigma(l_1+\dots+l_{k}+k)=k.
\end{align}
If we denote such permutation $\sigma$ by $\sigma_{l_1,\dots,l_{k+1}}$, the summation over $\sigma$ can be performed
by summing up all $\sigma$'s of the form $\sigma=\sigma_{l_1,\dots,l_{k+1}}$ for some $l_1,\dots,l_{k+1}$ (there are $n!$ such $\sigma$'s
for each fixed $l_1,\dots,l_{k+1}$)
, and then summing over all $l_1,\dots,l_{k+1}$
satisfying \eqref{l-condition}:
\begin{align}
\sum_{\sigma\in\mathfrak{S}_{k+n}}=\sum_{\substack{l_1,\dots,l_{k+1}\ge 0\\l_1+\dots + l_{k+1}=n}}\sum_{\sigma=\sigma_{l_1,\dots,l_{k+1}}}.
\end{align}
The integration in \eqref{s-int} depends only upon $l_1,\dots,l_{k+1}$, but not upon the concrete form of $\sigma=\sigma_{l_1,\dots,l_{k+1}}$,
and thus the summation over $\sigma=\sigma_{l_1,\dots,l_{k+1}}$ just gives the factor $n!$.  Then, we have
\begin{align}\label{comp-sigma}
& \frac{(-i)^n }{n!}\sum_{\sigma\in\mathfrak{S}_{k+n}}\int_{P'_{n,\sigma}} d\zeta_{k+1}\dots d\zeta_{k+n}\, A_{\sigma(1)}(\zeta_{\sigma(1)})\dots A_{\sigma(k+n)}(\zeta_{\sigma(k+n)}) \Psi\no\\
&\q\q= \frac{(-i)^n }{n!}
\sum_{\substack{l_1,\dots,l_{k+1}\ge 0\\l_1+\dots + l_{k+1}=n}}\sum_{\sigma=\sigma_{l_1,\dots,l_{k+1}}}
\int_{z\succ \tau^{(1)}_1\succ \dots \succ \tau^{(1)}_{l_1}\succ \zeta_1\succ \dots \succ \zeta_k\succ \tau^{(k+1)}_{1}\succ\dots\succ \tau^{(k+1)}_{l_{k+1}}\succ z'}  d\tau^{(1)}_{1}\dots d\tau^{(1)}_{l_1}\dots  d\tau^{(k+1)}_{1}\dots d\tau^{(k+1)}_{l_{k+1}}\, \no\\
&\q\q\q\q\q\q\q\q B(\tau^{(1)}_1)\dots B(\tau^{(1)}_{l_1}) A_1({\zeta_1}) 
\dots A_k(\zeta_k)B(\tau^{(k+1)}_1)\dots B(\tau^{(k+1)}_{l_{k+1}}) \Psi \no\\
&\q\q=\sum_{\substack{l_1,\dots,l_{k+1}\ge 0\\l_1+\dots + l_{k+1}=n}}\left(\frac{(-i)^{l_1}}{l_1!}\int_{\Gamma_{z,\zeta_1}^{l_1}} d\tau^{(1)}_{1}\dots d\tau^{(1)}_{l_1}\, TB(\tau^{(1)}_1)\dots B(\tau^{(1)}_{l_1})\right)
A_1(\zeta_1)\dots \no\\
&\q\q\q\q\q\q\q\q \dots A_k(\zeta_k)\left(\frac{(-i)^{l_{k+1}}}{l_{k+1}!}\int_{\Gamma_{\zeta_k,z'}^{l_{k+1}}} d\tau^{(k+1)}_{1}\dots d\tau^{(k+1)}_{l_{k+1}}\, TB(\tau^{(k+1)}_1)\dots B(\tau^{(k+1)}_{l_{k+1}})\right)\Psi\no\\
&\q\q=\sum_{\substack{l_1,\dots,l_{k+1}\ge 0\\l_1+\dots + l_{k+1}=n}}V_{l_1}(B;z,\zeta_1)A_1(\zeta_1)\dots A_k(\zeta_k)V_{l_{k+1}}(B;\zeta_k,z')\Psi.
\end{align}
The final expression in \eqref{comp-sigma} is absolutely summable with respect to $n=0,1,2,\dots$ to give
\begin{align}
\overline{U(B;z,\zeta_1)}A_1(\zeta_1)\dots A_k(\zeta_k)\overline{U(B;\zeta_k,z')}\Psi
\end{align}
by Theorem \ref{sandwich},
which means that $\Psi$ belongs to the subspace
\begin{align}
D\left(TA_1(\zeta_1)\dots A_k(\zeta_k)\exp\left(-i\int_{\Gamma_{z,z'}}d\zeta\, B(\zeta)\right)\right),
\end{align}
and 
\begin{align}
TA_1(\zeta_1)\dots A_k(\zeta_k)\exp\left(-i\int_{\Gamma_{z,z'}}d\zeta\,B(\zeta) \right)\Psi 
= \overline{U(B;z,\zeta_1)}A_1(\zeta_1)\dots A_k(\zeta_k)\overline{U(B;\zeta_k,z')}\Psi.
\end{align}
This completes the proof.
\end{proof}
\section{Complex time evolution and Gell-Man -- Low formula}
In this section, we consider the operator
\begin{align}
H=H_0+H_1
\end{align}
with $H_1\in\mathcal{C}_0$, and we state and derive the Gell-Mann -- Low formula.
 In what follows, we shortly denote 
\begin{align}
V_n(z,z'):=V_n(H_1;z,z'),\q U(z,z'):=U(H_1;z,z').
\end{align}
We define complex time evolution operator 
\begin{align}
W(z):=e^{-izH_0}\overline{U(z,0)}
\end{align}
for $z\in\C$ with $\im z\le 0$. The operator $W(z)$ generates the ``complex time evolution" in the 
following sense:
\begin{Thm}\label{Scheq}
For all $\Psi\in\mathcal{D}$, the mapping $z\mapsto W(z)\Psi$ is analytic 
on the lower half plain and satisfies the ``complex Schr\"odinger equation" 
\begin{align}
\frac{d}{dz}W(z)\Psi = -iHW(z)\Psi.
\end{align}
\end{Thm}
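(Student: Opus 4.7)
The plan is to work first on $D_\fin$ using the series expansion of $\overline{U(z,0)}$, differentiate term by term, and then promote the resulting identity to $\mathcal{D}$ via the algebra $\mathfrak{A}$ described after Theorem~\ref{sandwich}. Fix $\Psi\in V_E\cap D_\fin$. By \eqref{U-conv} and Lemma~\ref{int-rep-and-recursion}(i),(iii), one has
\[
W(z)\Psi \;=\; \sum_{n\ge 0} e^{-izH_0} V_n(z,0)\Psi,
\]
where each $V_n(z,0)\Psi$ lies in the fixed finite-energy subspace $V_{E+nb}$ on which $e^{-izH_0}$ is entire in $z$, so every summand is an $\H$-valued entire function of $z$. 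Combining $\|e^{-izH_0}\|\le 1$ on $\{\im z\le 0\}$ with the locally uniform convergence of $\sum_n V_n(z,0)\Psi$ supplied by Lemma~\ref{boundLem2}, the series above converges locally uniformly on the closed lower half plane, exhibiting $z\mapsto W(z)\Psi$ as a locally uniform sum of entire functions. Analyticity on the open lower half plane follows.

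For the differential equation, I would differentiate term by term. By Lemma~\ref{int-rep-and-recursion}(iv), $\partial_z V_n(z,0)\Psi=-iH_1(z)V_{n-1}(z,0)\Psi$; combined with the product rule on the subspace $V_{E+nb}$ (where $e^{-izH_0}$ is norm-differentiable with derivative $-iH_0 e^{-izH_0}$), this gives
\[
\partial_z\bigl[e^{-izH_0}V_n(z,0)\Psi\bigr]=-iH_0\,e^{-izH_0}V_n(z,0)\Psi \;-\; ie^{-izH_0}H_1(z)V_{n-1}(z,0)\Psi.
\]
The key algebraic identity is $e^{-izH_0}H_1(z)\Phi=H_1 e^{-izH_0}\Phi$ for $\Phi\in D_\fin$, which follows by unpacking $H_1(z)=e^{izH_0}H_1 e^{-izH_0}$ and using that $e^{izH_0}$ and $e^{-izH_0}$ are mutually inverse bounded operators on every $V_{E'}$. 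Summing over $n$ and verifying that the differentiated series is locally uniformly summable (the extra factors $E+nb$ from $H_0$ and $(E+(n-1)b+1)^{1/2}$ from the $H_0^{1/2}$-boundedness of $H_1$ are absorbed by the $1/n!$ decay in Lemma~\ref{boundLem}) justifies termwise differentiation and yields $\frac{d}{dz}W(z)\Psi = -iH_0 W(z)\Psi - iH_1 W(z)\Psi = -iHW(z)\Psi$ on $D_\fin$.

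It remains to extend both conclusions from $D_\fin$ to $\mathcal{D}$. By the remark following Theorem~\ref{sandwich}, for each $\Psi\in\mathcal{D}$ there is a sequence $\{\Psi_N\}\subset D_\fin$ with $a\Psi_N\to a\Psi$ locally uniformly in the complex parameters of $a$ for every $a\in\mathfrak{A}$. Applying this to $a=W(z)$ promotes the analyticity statement to $\mathcal{D}$, and applying it to $H_1 W(z)\in\mathfrak{A}$ passes the $H_1$-part of the differential equation to the limit. The main obstacle will be the $H_0$-part: $H_0$ is not $H_0^{1/2}$-bounded, hence not in $\mathcal{C}_0$, so $H_0 W(z)$ does not formally belong to $\mathfrak{A}$ and the algebraic extension does not apply directly. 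I would circumvent this by exploiting that for $\im z<0$ the symbol $\lambda\mapsto \lambda e^{-iz\lambda}$ is bounded on $[0,\infty)$, making $H_0 e^{-izH_0}$ a bounded operator and $H_0 W(z)=(H_0 e^{-izH_0})\overline{U(z,0)}$ a composition of a bounded operator with an element of $\mathfrak{A}$; continuity of this composition then delivers $H_0 W(z)\Psi_N\to H_0 W(z)\Psi$ locally uniformly on $\{\im z<0\}$ and completes the extension.
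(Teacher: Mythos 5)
Your proposal is correct and follows essentially the same route as the paper: both rest on the termwise series expansion of $W(z)\Psi$ from Theorem~\ref{time-ordered-exp} (plus the recursion in Lemma~\ref{int-rep-and-recursion}) and the locally uniform bounds of Lemmas~\ref{boundLem}--\ref{boundLem2}. The paper's proof is far terser --- it simply records $\mathcal{D}\subset D(e^{H_0})\subset D(H_0)$ so that $HW(z)\Psi$ makes sense and then asserts the difference quotient can "easily" be estimated via Theorem~\ref{time-ordered-exp} --- whereas you spell out the termwise differentiation on $D_\fin$, the commutation $e^{-izH_0}H_1(z)=H_1e^{-izH_0}$, and the passage to $\mathcal{D}$, with the boundedness of $H_0 e^{-izH_0}$ on $\{\im z<0\}$ supplying a clean substitute for the fact that $H_0\notin\mathcal{C}_0$.
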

\begin{proof}
We first remark that $\mathcal{D}\subset D(H_0)$. This can be seen by noting that
$\mathcal{D}\subset D(e^{H_0})\subset D(H_0)$.
By Theorem \ref{time-ordered-exp}, one can easily estimate
\begin{align}
\norm{\frac{W(z+h)\Psi-W(z)\Psi}{h} - (-iH)W(z)\Psi}
\end{align}
to know that this vanishes in the limit $h\to 0$.

\end{proof}
\if0
Basic assumption to construct the complex time-ordered exponential is as follows:

\begin{Ass}\label{Ass1} 
\begin{enumerate}[(I)]
\item There exists a constant $ a \ge 0 $ such that, for all $ E \ge 0 $, $ H_1 $ maps $ V_E $ into $ V_{E+a} $. 

\item There exists a locally bounded function $ F : \C \to [0, \infty ) $ satisfying (i) $ F (z) \equiv \rm{const}. \; (\rm{Im} z =0) $, and (ii) for all $ \Psi \in D_\rm{fin} $, \begin{align}
\| H_1 (z) \Psi \| \le F (z) \, \| (H_0 + 1)^{1/2} \Psi \| , \q z \in \C ,
\end{align}
where $ (H_0 +1) ^{1/2} $ defined through functional calculus.
\end{enumerate}
\end{Ass}

The above conditions are intended to apply to quantum field theories (see application in Section \ref{QEDsection}). The condition (I) means that the energy with respect to the free Hamiltonian increases at most $ a $ by one interaction. The condition (II) imposes a kind of restriction on the interaction (not on the size of the coupling constant). As will be seen in Section \ref{QEDsection}, QED with some regularizations satisfies the above conditions.

We first construct the time-ordered exponential on the complex plane. For $ z, z' \in \C $, we define a sequence of operators $ \{ U_n (z, z') \} _{n=0} ^\infty $ in the following way: For $ n=0 $, put
\begin{align}\label{n0}
D(U_0(z,z')) =D_\rm{fin} , \q U_0 (z,z') \Psi = \Psi , \q \Psi \in D_\rm{fin} .
\end{align}
For $ n \ge 1 $, we inductively define 
\begin{align}\label{n1}
D(U_n (z,z')) = D_\rm{fin} , \q U_n (z,z') \Psi = -i \int _{z'} ^z d\zeta \, H_1 (\zeta ) U_{n-1} (\zeta , z') \Psi , \q \Psi \in D_\rm{fin} .
\end{align}
where the integration is understood as a line integral of a vector-valued analytic function which does not depend on the choice of the path of integration, and the path from $ z' $ to $ z $ is taken as a piecewise continuously differentiable simple curve. The well-definedness and the analiticity of $ U_n (z,z') $ are proved in Lemma \ref{n2}. 

\begin{Thm}\label{main1} Under assumption \ref{Ass1}, for all $ z,z' \in \C $ and $ \Psi \in D_\rm{fin} $, the series:
\begin{align}
U(z,z') \Psi := \Psi + (-i) \int _{z'} ^z d\zeta _1 \, H_1(\zeta _1) \Psi + (-i)^2 \int _{z'} ^z d\zeta _1 \int _{z'} ^{\zeta _1} d\zeta _2 \, H_1(\zeta _1) H_1(\zeta _2) \Psi + \cdots 
\end{align}
converges absolutely, where each of integrals are complex line integrals of a vector-valued analytic function which does not depend on the choice of the path of integration. Furthermore, the following (i) and (ii) hold:
\begin{enumerate}[(i)]
\item The vector-valued function $ U(z,z') \Psi $ is analytic with respect to each variable $ z,z' $ in the strong topology, and satisfies
\begin{align}
& \frac{\pa}{\pa z} U(z,z') \Psi = -i \overline{H_1(z)} \, U(z,z') \Psi , \label{main1-1} \\
& \frac{\pa}{\pa z'} U(z,z') \Psi = i U(z,z') H_1(z') \Psi . \label{main1-2}
\end{align}

\item For all $ z,z' \in \C $, $ U(z,z') $ is closable, and satisfies the following inclusion relation:
\begin{align}\label{adjoint2}
U (z,z') ^* \supset \overline{U(z' {}^* ,z^*)} .
\end{align}
\end{enumerate}
\end{Thm}

Now we introduce a class of operators.

The following theorem ensures that a certain class of operators consisting of $ U(z,z') $ and $ \mathcal{C}_0 $-class operators is series expandable. This property has important implications in the Gell-Mann -- Low formula.

\begin{Thm}\label{sandwich} Let $ T_k \, (k= 1, ... , m , \; m \ge 1) $ be closed operators in $ \mathcal{C}_0 $-class. Then, for all $ z_k , z_k ' \in \C \; (k=1,...,m) $, $ D_\rm{fin} \subset D(T_m \overline{U(z_m ,z_m')} \cdots T_1 \overline{U(z_1 , z_1')}) $. Moreover, for all $ \Psi \in D_\rm{fin} $,
\begin{align}
T_m \overline{U(z_m ,z_m')} \cdots T_1 \overline{U(z_1 , z_1')} \Psi = \sum _{ n_1 , ... , n_m  = 0} ^\infty T_m U_{n_m}(z_m ,z_m') \cdots T_1 U_{n_1} (z_1 , z_1') \Psi , \label{sandwich1}
\end{align}
where the right-hand side converges absolutely, and does not depend upon the summation order.
\end{Thm}

The time evolution operator $ U(z,z') $ has the following properties:
\fi

\if0
In order to investigate the relation between the complex time-ordered exponential $ U(z,z') $ and the time-evolution operators $ e^{-itH_0} $ and $ e^{-itH} $, we assume the following:

\begin{Ass}\label{Ass2} $ H $ is essentially self-adjoint and bounded from below.
\end{Ass}

We denote the closure of $ H $ by the same symbol.
\fi

\begin{Thm}\label{explicitThm}
Suppose that $H_1$ is $ \mathcal{C}_0 $-class symmetric operator. Then, $H$ is self-adjoint and bounded below.
Moreover, it follows that 
\begin{align}
\overline{W(z)} \Psi = e^{-izH},
\end{align}
for all $z\in\C$ with $\im z \le 0$.
In particular, it follows that
\begin{align}\label{explicit100}
\overline{U(z,z')} = e^{izH_0} e^{-i(z-z')H} e^{-iz'H_0} , \q \im z\le \im z' .
\end{align}
\end{Thm}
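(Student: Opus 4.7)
My strategy is to first establish via a form-sum argument that $H$ is self-adjoint and bounded below, then to identify $\overline{W(z)}$ with $e^{-izH}$ by a Cauchy-problem argument carried out first on $\R$ and then extended holomorphically to the lower half-plane, and finally to extract the expression for $\overline{U(z,z')}$ from the translational invariance of Theorem \ref{properties}. For the self-adjointness and semiboundedness, the $\mathcal{C}_0$-class condition gives $\|H_1\Psi\| \le a\|\Psi\| + b\|H_0^{1/2}\Psi\|$ on $D(H_0^{1/2})$, and Cauchy--Schwarz followed by Young's inequality yields
\begin{align*}
|\langle \Psi, H_1 \Psi \rangle| \le \|\Psi\| \|H_1 \Psi\| \le \epsilon \|H_0^{1/2}\Psi\|^2 + C_\epsilon \|\Psi\|^2
\end{align*}
for every $\epsilon > 0$; symmetry of $H_1$ makes this form real. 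The KLMN theorem then produces a unique self-adjoint operator $H$, bounded below, with $D(H)\supset D(H_0)$ and $H|_{D(H_0)} = H_0 + H_1$.

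To identify $\overline{W(z)} = e^{-izH}$, Theorem \ref{properties} (iii) combined with the composition rule and translational invariance of (i)--(ii), and the strong continuity from Theorem \ref{diff-eq}, make $\{W(t)\}_{t\in\R}$ a strongly continuous unitary group. Using the chain $W(t)\mathcal{D}\subset\mathcal{D}\subset D(H_0)\subset D(H)$ --- the first inclusion from $W(t)\in\mathfrak{A}$ and Theorem \ref{sandwich}, the second from the remark in the proof of Theorem \ref{Scheq}, and the third from KLMN --- the equation of Theorem \ref{Scheq} reads $\frac{d}{dt} W(t)\Psi = -iH W(t)\Psi$ for $\Psi \in \mathcal{D}$. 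Setting $F(t) := W(t)\Psi - e^{-itH}\Psi$, the energy identity $\frac{d}{dt}\|F(t)\|^2 = 2\im\langle F(t), H F(t)\rangle = 0$ together with $F(0)=0$ forces $F\equiv 0$ on $\R$. Since $H$ is bounded below, $e^{-izH}$ is a uniformly bounded holomorphic family on $\{\im z \le 0\}$; combined with the analyticity of $z\mapsto W(z)\Psi$ given by Theorem \ref{Scheq}, the difference $F(z) := W(z)\Psi - e^{-izH}\Psi$ is analytic on the open lower half-plane, continuous up to $\R$, and vanishes on $\R$. Applying Schwarz reflection to each scalar map $z\mapsto \langle\Phi, F(z)\rangle$ forces $F\equiv 0$ on $\{\im z\le 0\}$, so by density of $\mathcal{D}$ and boundedness of $e^{-izH}$, $\overline{W(z)} = e^{-izH}$. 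The formula \eqref{explicit100} then follows by combining $\overline{U(z,0)} = e^{izH_0}e^{-izH}$ with the translational invariance $\overline{U(z,z')} = e^{iz'H_0}\overline{U(z-z',0)}e^{-iz'H_0}$ from Theorem \ref{properties} (ii).

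The main obstacle I expect is the domain bookkeeping behind the energy identity: one must ensure that $W(t)\Psi$, not merely $\Psi$, lies in $D(H)$, which relies on the closure properties of the algebra $\mathfrak{A}$ (Theorem \ref{sandwich}) and on the inclusion $\mathcal{D}\subset D(H_0)$. Beyond this, the energy argument itself works only on the real axis, and the extension to the full lower half-plane must be carried out by holomorphic continuation via Schwarz reflection; this extension is only meaningful because the semiboundedness of $H$, obtained in the first step, makes $e^{-izH}$ a bounded analytic family on $\{\im z\le 0\}$.
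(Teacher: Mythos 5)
Your proposal is correct, and it reaches the conclusion by a genuinely different route than the paper. For self-adjointness, the paper simply observes that the $\mathcal{C}_0$-class hypothesis makes $H_1$ infinitesimally $H_0$-bounded in the operator sense and invokes Kato--Rellich to get $H$ self-adjoint with $D(H)=D(H_0)$; your KLMN form-sum argument works too, but since the operator bound with relative bound zero is already at hand, Kato--Rellich is the shorter path and gives the operator equality $D(H)=D(H_0)$ directly (which is, incidentally, exactly what you later invoke when you write $\mathcal{D}\subset D(H_0)\subset D(H)$). The larger divergence is in the identification $\overline{W(z)}=e^{-izH}$. The paper stays entirely in the open lower half-plane: for $\Psi\in\mathcal{D}$ and $\Phi\in D_0(H):=\bigcup_L R(E_H([-L,L]))$ it differentiates $z\mapsto\langle e^{-iz^*H}\Phi,W(z)\Psi\rangle$, finds the derivative vanishes by self-adjointness, concludes that this holomorphic function is constantly $\langle\Phi,\Psi\rangle$, and then uses that $D_0(H)$ is a core of $e^{-iz^*H}$ to deduce $W(z)\Psi\in D(e^{izH})$ and $e^{izH}W(z)\Psi=\Psi$. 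Your two-step argument --- first the energy identity $\tfrac{d}{dt}\|W(t)\Psi-e^{-itH}\Psi\|^2=2\,\im\langle F(t),HF(t)\rangle=0$ on the real axis, then Schwarz reflection to propagate $F\equiv 0$ from $\R$ into $\{\im z<0\}$ --- completely avoids the unbounded operator $e^{-iz^*H}$ and the auxiliary core $D_0(H)$, at the price of having to verify separately that the paper's complex Schr\"odinger equation degenerates to the real one on $\R$ (which Theorem~\ref{diff-eq}~(ii) provides) and that $W(t)$ is indeed a strongly continuous unitary group via Theorem~\ref{properties}~(ii)--(iii). Both are sound; yours is somewhat more elementary conceptually, while the paper's is shorter given the machinery already in place. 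Your derivation of \eqref{explicit100} from translational invariance matches the paper's (the paper's proof actually has a sign typo in the exponent there; yours is correct).
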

\begin{proof}
From Assumption \ref{Ass3}, $H_1$ is infinitesimal with respect to $H_0$ and thus $H$ is self-adjoint with
$D(H)=D(H_0)$, and bounded below by the Kato-Rellich Theorem. 

By Theorem \ref{Scheq}, we can differentiate for all $\Psi\in\mathcal{D}$, $\Phi\in D_0(H):=\cup_{L\in\Real} R (E_H([-L,L]))$,
and $z\in \C$ with $\im z <0$,
\begin{align}
\frac{d}{dz}\Expect{e^{-iz^*H}\Phi,W(z)\Psi}&=\Expect{-iHe^{-iz^*H}\Phi,W(z)\Psi} + \Expect{e^{-iz^*H}\Phi,-iW(z)\Psi}\no\\
&=0.
\end{align}
Thus, one finds
\begin{align}\label{above}
\Expect{\Phi,\Psi}=\Expect{e^{-iz^*H}\Phi,W(z)\Psi},
\end{align}
for all $\Psi\in\mathcal{D}$ and $\Phi\in D_0(H)$.
Since $D_0(H)$ is a core of $e^{-iz^*H}$, we obtain from \eqref{above} $W(z)\Psi\in D(e^{izH})$ and
\begin{align}
e^{izH}W(z)\Psi=\Psi.
\end{align}
Hence, we arrive at
\begin{align}\label{con-half}
W(z)\Psi = e^{-izH}\Psi,
\end{align}
for all $z\in\C$ with $\im z < 0$. But since both sides of \eqref{con-half} are continuous on the region $\im z \le 0$,
\eqref{con-half} must hold on $\im z \le 0$. Since the both sides are bounded, one has
\begin{align}
\overline{W(z)} = e^{-izH},\q \im z \le 0.
\end{align}

For $z,z'$ satisfying $\im z \le \im z'$, we have from \eqref{parallel3}
\begin{align}
W(z-z')\Psi &= e^{-i(z-z')H_0}\overline{U(z-z',0)}\Psi \no\\
&=e^{-izH_0}\overline{U(z,z')}\,e^{iz'H_0}\Psi,\q \Psi\in\mathcal{D}.
\end{align} 
This implies
\begin{align}
\overline{U(z,z')}\Psi = e^{izH_0}e^{-i(z-z')H}e^{iz'H_0}\Psi.
\end{align}
If $z,z'$ are real, the right-hand-side is unitary, and thus the last assertion follows. 
\end{proof}

We introduce the assumptions needed to derive the Gell-Mann -- Low formula. 
For a linear operator $ T $, we denote the spectrum of $ T $ by $ \sigma (T) $. If $ T $ is self-adjoint and bounded from below, then we define
\begin{align}
E_0 (T) := \inf \sigma (T) .
\end{align}
We say that $ T $ has a ground state if $ E_0 (T) $ is an eigenvalue of $ T $. In that case, $ E_0 (T) $ is called the \textit{ground energy} of $T$, and each non-zero vector in $ \ker (T-E_0 (T)) $ is called a \textit{ground state} of $ T $. If $ \dim \ker (T-E_0 (T)) =1 $, we say that $ T $ has a unique ground state.
The following assumption are used to prove the Gell-Mann -- Low formula.

\begin{Ass}\label{Ass3} 
\begin{enumerate}[(I)]
\item $ H_0 $ has a unique ground state $ \Omega _0 \; (\| \Omega _0 \| =1 ) $, and the ground energy is zero: $ E_0 (H_0) =0 $.

\item $ H $ has a unique ground state $ \Omega \; (\| \Omega \| =1 )$. 

\item $ \Expect{\Omega , \Omega _0} \neq 0 $.
\end{enumerate}
\end{Ass}
\if0
For a function $ f $ defined on $ \C $ and a complex number $ w $, we use the following notation:
\begin{align}
\lim _{z\to \infty (w)} f(z) := \lim _{t \to \infty } f (t w) ,
\end{align}
if the limit on the right-hand side exists.
\fi

Under Assumption \ref{Ass3}, we define $m$-point Green's function $G_m(z_1,\dots,z_m)$ by
\begin{align}
G_m(z_1,\dots,z_m) := e^{i(z_1-z_m)E_0(H)}\Expect{\Omega, A_1W(z_1-z_2)A_2\dots A_{m-1}W(z_{m-1}-z_m)\Omega},
\end{align}
for $\im z_1\le\dots\le\im z_m$ whenever the right-hand-side is well-defined.
The Gell-Mann and -- Low formula is given by:

\begin{Thm}\label{GML} 
Suppose that Assumption \ref{Ass3} holds and $H_1$ is symmetric. Let $ A_k \; (k=1,..., m , \; m\ge 1) $ be linear operators having the following properties:
\begin{enumerate}[(I)]
\item Each $ A_k $ is in $ \mathcal{C}_0 $-class. 

\item For each $ k $, there exist integer $ r_k \ge 0 $ such that, for all $ n\in \N $, $ A_k $ maps $D(H^{n+r_k}) $ into $ D(H^n) $. 
\end{enumerate}
Let $ z_1 ,..., z_m \in \C$ with $\im z_1\le \dots \le \im z_m$. Choose a simple curve $\Gamma_T^\e$ from $-T(1-i\e)$ to $T(1-i\e)$ ($T,\e>0$) on which
$z_1\succ \dots \succ z_m$. Then, $m$-point Green's function $G_m(z_1,\dots,z_m)$ is well-defined and satisfies the formula
\begin{align}
G_m(z_1,\dots,z_m)
 = \lim _{ T \to \infty } \frac{ \Expect{ \Omega _0 , TA_1(z_1)\dots A_m(z_m)
\exp \left(-i\int_{\Gamma_T^\e}d\zeta\, H_1(\zeta)\right) \Omega _0 } }{\Expect{\Omega _0 , 
T\exp \left(-i\int_{\Gamma_T^\e}d\zeta\, H_1(\zeta)\right)\Omega _0}  }. \label{GMLeq}
\end{align}
\end{Thm}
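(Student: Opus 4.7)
The plan is to reduce both sides of \eqref{GMLeq} to matrix elements of bounded operators of the form $e^{-iwH}$, and then extract the contribution of the ground state $\Omega$ by a spectral-theoretic asymptotic analysis as $T\to\infty$.

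First, I would invoke Theorem \ref{Time-order-ext} to rewrite the numerator of \eqref{GMLeq} as
\[
N_T:=\Expect{\Omega_0,\, \overline{U(T_\e,z_1)}A_1(z_1)\overline{U(z_1,z_2)}A_2(z_2)\cdots A_m(z_m)\overline{U(z_m,-T_\e)}\Omega_0},
\]
where $T_\e:=T(1-i\e)$, and the denominator as $D_T:=\Expect{\Omega_0,\overline{U(T_\e,-T_\e)}\Omega_0}$. Then, by Theorem \ref{explicitThm}, $\overline{U(z,z')}=e^{izH_0}e^{-i(z-z')H}e^{-iz'H_0}$, so the free evolutions flanking each $A_k(z_k)=e^{iz_kH_0}A_ke^{-iz_kH_0}$ cancel, and the outer $e^{\pm iT_\e H_0}$ act trivially on $\Omega_0$ since $H_0\Omega_0=0$. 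This collapses the expressions to
\[
N_T=\Expect{\Omega_0,\, e^{-i(T_\e-z_1)H}A_1 e^{-i(z_1-z_2)H}A_2\cdots A_{m-1} e^{-i(z_{m-1}-z_m)H}A_m e^{-i(z_m+T_\e)H}\Omega_0},
\]
\[
D_T=\Expect{\Omega_0,\, e^{-2iT_\e H}\Omega_0}.
\]
Each exponential has non-positive imaginary argument (for $T$ large, in the outer factors, and by assumption on $\im z_k$ in the internal factors), and hence is a bounded operator on $\H$.

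The asymptotic engine is the following observation: for any $w\in\C$ with $\im w<0$, the spectral theorem for $H$ gives
\[
e^{iwE_0(H)}e^{-iwH}\Omega_0 = \Expect{\Omega,\Omega_0}\Omega + \int_{(E_0(H),\infty)} e^{-iw(\lambda-E_0(H))}\,dE_H(\lambda)\Omega_0,
\]
because the unique-ground-state hypothesis isolates the atom of the spectral measure of $\Omega_0$ at $E_0(H)$ as $|\Omega\rangle\langle\Omega|\Omega_0\rangle$. Dominated convergence shows that the tail integral vanishes in norm as $\im w\to-\infty$, so $e^{iwE_0(H)}e^{-iwH}\Omega_0\to \Expect{\Omega,\Omega_0}\Omega$ strongly. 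Moreover, since $\lambda^n e^{-iw(\lambda-E_0(H))}$ is uniformly bounded by an integrable function once $\im w\le -1$, the same limit holds in the graph norm of $H^n$ for every $n\in\N$. Applied with $w=2T_\e$ this yields $e^{2iT_\e E_0(H)}D_T\to|\Expect{\Omega,\Omega_0}|^2$, which is nonzero by Assumption \ref{Ass3}(III); applied with $w=z_m+T_\e$ on the right, and (passing to adjoints via $\langle\Omega_0,e^{-iwH}\cdot\rangle=\langle e^{iw^*H}\Omega_0,\cdot\rangle$) with $w=T_\e-z_1$ on the left, it controls the outer factors of $N_T$.

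To propagate the limit through the chain of $A_k$'s in $N_T$, I would use condition (II) of the theorem: combined with the closedness of each $A_k$ (Definition \ref{C_0}(I)), the closed-graph theorem upgrades each $A_k$ to a bounded operator from $D(H^{n+r_k})$ into $D(H^n)$ in the respective graph norms. Since every intermediate $e^{-i(z_k-z_{k+1})H}$ is bounded and commutes with any power of $H$, the graph-norm convergence of the outer vectors survives the alternation of $A_k$ and $e^{-i(z_k-z_{k+1})H}$; telescoping from right to left then gives
\[
e^{i(T_\e-z_1)E_0(H)}e^{i(z_m+T_\e)E_0(H)}N_T \longrightarrow |\Expect{\Omega,\Omega_0}|^2\,\Expect{\Omega,\, A_1 e^{-i(z_1-z_2)H}A_2\cdots A_{m-1}e^{-i(z_{m-1}-z_m)H}A_m\Omega}.
\]
The surviving scalar prefactor simplifies as $e^{i(T_\e-z_1)E_0(H)+i(z_m+T_\e)E_0(H)}/e^{2iT_\e E_0(H)}=e^{-i(z_1-z_m)E_0(H)}$, so dividing $N_T$ by $D_T$ cancels $|\Expect{\Omega,\Omega_0}|^2$ and, after multiplying by the prefactor $e^{i(z_1-z_m)E_0(H)}$ in the definition of $G_m$ together with the identification $W(z)=e^{-izH}$ from Theorem \ref{explicitThm}, reproduces $G_m(z_1,\dots,z_m)$ in the limit. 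The main technical obstacle is justifying the $H^n$-graph-norm version of the ground-state projection and verifying that condition (II) indeed makes each $A_k$ continuous between the appropriate Hilbert scales of $H$; once this uniform control is secured, the telescoping and the cancellation of phases are routine.
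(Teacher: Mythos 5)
Your proposal follows essentially the same route as the paper's proof: you rewrite the numerator via Theorem \ref{Time-order-ext} and Theorem \ref{explicitThm} so that everything reduces to matrix elements of products of $A_k$'s interleaved with $e^{-iwH}$ factors, project onto the ground state via the spectral theorem and dominated convergence (the paper's Lemma \ref{adiabatic}), upgrade each $A_k$ to a bounded map between graph-norm scales of $H$ via the closed-graph theorem (the paper's Lemma \ref{bdd-ak}, stated there with $(H-\zeta)^{\pm r}$ sandwiches), and telescope the limit through the chain (the paper's Lemma \ref{tea}). The only difference is the direction of the argument — the paper starts from the $G_m$ side and works toward the time-ordered exponential, you start from the time-ordered side and work toward $G_m$ — and your dominated-convergence observation (that the damping $e^{-T\e(\lambda-E_0)}$ makes the $H^n$-graph-norm limit automatic, independently of whether $\Omega_0\in D(H^n)$ a priori) is a harmless simplification of the paper's Lemma \ref{adiabatic}, which is stated under the hypothesis $\Psi\in D(f(H))$.
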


To prove the Gell-Mann -- Low formula \eqref{GMLeq}, we prepare some lemmas.
We denote $E_0(H)$ simply by $E_0$.
\begin{Lem}\label{adiabatic}
For $\e>0$ and Borel function $f:\R\to\C$, we have
\begin{align}
\lim_{T\to\infty} f(H) e^{iT(\pm1-i\e)E_0}W(T(\pm1-i\e))\Psi = f(E_0) P_0\Psi, \q \Psi \in D(f(H)),
\end{align}
where $P_0$ is the Projection onto the closed subspace $\ker (H-E_0)$. 
\end{Lem}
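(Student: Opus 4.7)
The plan is to reduce the entire expression to a spectral integral in $H$ and then invoke dominated convergence. First, I would apply Theorem \ref{explicitThm}: since $z=T(\pm 1-i\e)$ satisfies $\im z=-T\e\le 0$, one has $\overline{W(z)}=e^{-izH}$, so that
\begin{align*}
e^{iT(\pm 1-i\e)E_0}\,\overline{W(T(\pm 1-i\e))}\,\Psi
=e^{-iT(\pm 1-i\e)(H-E_0)}\,\Psi,\quad \Psi\in\H.
\end{align*}
Since $H\ge E_0$, in the spectral representation of $H$ the operator $e^{-iT(\pm 1-i\e)(H-E_0)}$ acts as multiplication by $e^{-iT(\pm 1-i\e)(\mu-E_0)}$, whose modulus $e^{-T\e(\mu-E_0)}$ is bounded by $1$ on $\sigma(H)\subset[E_0,\infty)$. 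In particular, $D(f(H))$ is invariant under this operator, so the composition $f(H)\,e^{-iT(\pm 1-i\e)(H-E_0)}\Psi$ is well-defined for every $\Psi\in D(f(H))$.

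Next, for such $\Psi$, this vector is represented by the spectral integral
\begin{align*}
\int_{[E_0,\infty)}f(\mu)\,e^{-iT(\pm 1-i\e)(\mu-E_0)}\,dE_H(\mu)\Psi.
\end{align*}
As $T\to\infty$, the integrand converges pointwise in $\mu$ to $f(E_0)\chi_{\{E_0\}}(\mu)$: the factor $e^{-T\e(\mu-E_0)}$ vanishes in the limit for every $\mu>E_0$ and equals $1$ at $\mu=E_0$. Its modulus is bounded by $|f(\mu)|$, which lies in $L^2(d\|E_H(\cdot)\Psi\|^2)$ precisely because $\Psi\in D(f(H))$. Lebesgue dominated convergence in $L^2(d\|E_H(\cdot)\Psi\|^2)$ then yields convergence in the norm of $\H$ to
\begin{align*}
f(E_0)\,E_H(\{E_0\})\Psi=f(E_0)\,P_0\Psi,
\end{align*}
which is the claimed limit.

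The argument is essentially spectral-theoretic and I do not anticipate a serious obstacle. The only minor subtlety is that $f(H)$ may be unbounded, so one cannot simply commute $f(H)$ past the limit after showing that $e^{-iT(\pm 1-i\e)(H-E_0)}\Psi\to P_0\Psi$ in $\H$; this is what forces the slightly stronger $L^2$-domination argument with $|f(\mu)|$ as the dominating function to control the full expression $f(H)\,e^{-iT(\pm 1-i\e)(H-E_0)}\Psi$ uniformly in $T$.
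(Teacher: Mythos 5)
Your proof is correct and follows essentially the same route as the paper: use Theorem \ref{explicitThm} to replace $W(T(\pm1-i\e))$ by $e^{-iT(\pm1-i\e)H}$, then work in the spectral representation of $H$ and apply dominated convergence (the paper likewise invokes ``functional calculus and Lebesgue's convergence theorem''). Your version is slightly more explicit about the dominating function $|f(\mu)|$ and why $\Psi\in D(f(H))$ makes it usable, which is the correct justification implicit in the paper's one-line appeal to Lebesgue.
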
 
\begin{proof}
By the functional calculus and Lebesgue's convergence Theorem, we have
\begin{align}
\norm{f(H)e^{iT(\pm1-i\e)E_0}W(T(\pm1-i\e))\Psi - f(E_0)P_0\Psi}^2&=\norm{f(H)e^{\mp iT(H-E_0)}e^{-T\e (H-E_0)}\Psi -f(E_0) E_H(\{E_0\})\Psi}^2\no\\
&=\int_{[E_0,\infty)}d\norm{E_H(\lambda)\Psi}^2 \left| f(\lambda)(e^{-T\e (\lambda -E_0)}\Psi -\chi_{\{E_0\}}(\lambda))\right|^2\no\\
&=\int_{(E_0,\infty)}d\norm{E_H(\lambda)\Psi}^2 \left|f(\lambda) e^{-T\e (\lambda -E_0)}\Psi\right|^2\no\\
&\to 0,
\end{align}
as $T$ tends to infinity.
\end{proof}
\begin{Lem}\label{bdd-ak}
Under the same assumptions of Theorem \ref{GML}, the operators 
\begin{align}
\widetilde{A_k}  :=  (H-\zeta )^{\sum _{j=1} ^{k-1} r_j } A_k (H-\zeta ) ^{-\sum _{j=1}^k r_j } , \q k=1,...,m,
\end{align}
are bounded.
\end{Lem}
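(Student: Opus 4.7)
The strategy is to apply the closed graph theorem. Since $H$ is self-adjoint and bounded below (by Theorem \ref{explicitThm}), I take $\zeta\in\rho(H)$ (in particular any $\zeta$ with $\im \zeta\ne 0$), so that for every integer $N\ge 0$ the operator $(H-\zeta)^{-N}$ is bounded on $\H$ with range $D(H^N)$. Write $s_k:=\sum_{j=1}^{k-1} r_j$ (with $s_1:=0$), so that
\begin{align}
\widetilde{A_k}=(H-\zeta)^{s_k}\,A_k\,(H-\zeta)^{-(s_k+r_k)}.
\end{align}

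The first step is to check that $\widetilde{A_k}$ is defined on all of $\H$. For any $\Psi\in\H$, the vector $(H-\zeta)^{-(s_k+r_k)}\Psi$ lies in $D(H^{s_k+r_k})$; hypothesis (II) of Theorem \ref{GML}, applied with $n=s_k$, sends this domain into $D(H^{s_k})$, on which $(H-\zeta)^{s_k}$ is defined. Hence $D(\widetilde{A_k})=\H$. The second step is closedness. Suppose $\Psi_n\to\Psi$ in $\H$ and $\widetilde{A_k}\Psi_n\to\Phi$. Setting $\xi_n:=(H-\zeta)^{-(s_k+r_k)}\Psi_n$, the boundedness of the resolvent power gives $\xi_n\to\xi:=(H-\zeta)^{-(s_k+r_k)}\Psi$; analogously, $A_k\xi_n=(H-\zeta)^{-s_k}\widetilde{A_k}\Psi_n\to (H-\zeta)^{-s_k}\Phi$. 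Because $A_k$ is closed (as a $\mathcal{C}_0$-class operator, Definition \ref{C_0}(I)), this forces $\xi\in D(A_k)$ and $A_k\xi=(H-\zeta)^{-s_k}\Phi$. But $\xi\in D(H^{s_k+r_k})$, so by hypothesis (II) again $A_k\xi\in D(H^{s_k})$, and applying $(H-\zeta)^{s_k}$ to both sides yields $\widetilde{A_k}\Psi=\Phi$. The closed graph theorem then gives boundedness of $\widetilde{A_k}$ on $\H$.

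There is no real obstacle: the proof is a routine application of the closed graph theorem, and the only minor bookkeeping is matching the domain inclusions so that the nesting $A_k:D(H^{n+r_k})\to D(H^n)$ in hypothesis (II) precisely absorbs the left-multiplication by $(H-\zeta)^{s_k}$ allowed by the right-multiplication by $(H-\zeta)^{-(s_k+r_k)}$. The fact that $A_k$, and not only its closure, is already closed on its natural domain (by Definition \ref{C_0}(I)) is what makes the closedness step work without having to chase cores.
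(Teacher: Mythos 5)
Your proposal is correct and takes essentially the same route as the paper: show $D(\widetilde{A_k})=\H$ via hypothesis (II) of Theorem \ref{GML} applied with $n=\sum_{j<k}r_j$, check closedness, and invoke the closed graph theorem. The only difference is that the paper asserts closedness of $\widetilde{A_k}$ as ``easy to check,'' while you spell out the approximating-sequence argument using the boundedness of the resolvent powers and the closedness of $A_k$ from Definition \ref{C_0}(I); this fills in the detail the paper elides but is the same idea.
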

\begin{proof}
From the assumptions, 
\begin{align}
 A_k (H-\zeta ) ^{-\sum _{j=1}^k r_j }\Psi\in D(H^{\sum _{j=1} ^{k-1} r_j }),
\end{align}
for all $\Psi\in\H$. Thus, 
\[ D(\widetilde{A_k})=\H .\]

On the other hand, it is easy to check that $\widetilde{A_k}$'s are closed.
Hence, by the closed graph theorem, each $\widetilde{A_k}$'s are bounded. 
\end{proof}
\begin{Lem}\label{tea}
Under the same assumptions of Theorem \ref{GML}, it follows that
\begin{align}
&\lim_{T\to\infty} A_1W(z_1-z_2)A_2\dots A_{m-1}W(z_{m-1}-z_m)A_m f(H) e^{iT(\pm1-i\e)}W(T(\pm1-i\e))\Psi \no\\
&\q\q\q\q =  
A_1W(z_1-z_2)A_2\dots A_{m-1}W(z_{m-1}-z_m)A_m f(E_0)P_0\Psi, \q \Psi\in \bigcap_{n\in\N}D(H^nf(H)).
\end{align}
for all Borel function $f:\R\to\C$.
\end{Lem}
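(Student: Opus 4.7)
The plan is to reduce the assertion to Lemma \ref{adiabatic} by absorbing the unbounded operators $A_1,\dots,A_m$ into a single bounded prefactor and one polynomial of $H$ acting on the right.

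First, fix $\zeta\in\C\setminus\sigma(H)$; by Theorem \ref{explicitThm} $H$ is self-adjoint, so any $\zeta\in\C\setminus\R$ will do. The same theorem gives $W(z)=e^{-izH}$ as a bounded Borel function of $H$ for $\im z\le 0$, hence $W(z)$ commutes with every power of $(H-\zeta)$. Set $R_k:=\sum_{j=1}^{k}r_j$ and $R:=R_m$. Lemma \ref{bdd-ak} asserts that the operators
\[
\widetilde{A_k}=(H-\zeta)^{R_{k-1}}A_k(H-\zeta)^{-R_k},\qquad k=1,\dots,m,
\]
are bounded, whence $A_k=(H-\zeta)^{-R_{k-1}}\widetilde{A_k}(H-\zeta)^{R_k}$. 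Telescoping and pulling each $(H-\zeta)^{R_k}$ through the intervening $W(z_k-z_{k+1})$ yields the identity
\[
A_1 W(z_1-z_2)A_2\dots W(z_{m-1}-z_m)A_m=B\,(H-\zeta)^R\qquad\text{on }D(H^R),
\]
where
\[
B:=\widetilde{A_1}W(z_1-z_2)\widetilde{A_2}W(z_2-z_3)\dots W(z_{m-1}-z_m)\widetilde{A_m}
\]
is bounded as a product of bounded operators.

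Now apply this identity to the vector $\Phi_T:=f(H)e^{iT(\pm 1-i\e)E_0}W(T(\pm 1-i\e))\Psi$. Since $\Psi\in\bigcap_n D(H^n f(H))$, in particular $\Phi_T\in D(H^R)$, so the identity is legitimately applied and the full expression equals $B\,(H-\zeta)^R f(H)e^{iT(\pm1-i\e)E_0}W(T(\pm1-i\e))\Psi$. Lemma \ref{adiabatic}, applied to the Borel function $\lambda\mapsto(\lambda-\zeta)^R f(\lambda)$, gives
\[
\lim_{T\to\infty}(H-\zeta)^R f(H)e^{iT(\pm1-i\e)E_0}W(T(\pm1-i\e))\Psi=(E_0-\zeta)^R f(E_0)P_0\Psi,
\]
and continuity of the bounded operator $B$ transports the limit to $B(E_0-\zeta)^R f(E_0)P_0\Psi$. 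Since $P_0\Psi\in\ker(H-E_0)$, the scalar $(E_0-\zeta)^R$ coincides with the action of $(H-\zeta)^R$ on this eigenvector, so reversing the telescoping yields $A_1 W(z_1-z_2)\dots A_m f(E_0)P_0\Psi$, as required.

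The main technical point is justifying the telescoping identity: one needs the commutation of $W(z)$ with $(H-\zeta)^n$ (a consequence of Theorem \ref{explicitThm} via the Borel functional calculus) and care with domains, which is ensured at each step by the hypothesis $\Psi\in\bigcap_n D(H^n f(H))$ together with property (II) of Theorem \ref{GML}. Once the identity is established, the remainder is a routine composition of Lemma \ref{bdd-ak}, Lemma \ref{adiabatic}, and continuity of $B$.
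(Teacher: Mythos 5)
Your proof is correct and follows essentially the same route as the paper: both rewrite the product $A_1 W(z_1-z_2)\cdots A_m$ as $B(H-\zeta)^{\sum_k r_k}$ with $B$ a bounded product of the $\widetilde{A_k}$'s and $W$'s (via Lemma \ref{bdd-ak} and Theorem \ref{explicitThm}), apply Lemma \ref{adiabatic} to the modified Borel function, and transport the limit through the bounded prefactor $B$. Your write-up is a bit more explicit than the paper's — in particular you spell out the application of Lemma \ref{adiabatic} to $\lambda\mapsto(\lambda-\zeta)^R f(\lambda)$ and the reverse telescoping at the end, which the paper compresses into ``which implies the desired result.''
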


\begin{proof} Under the present assumptions, we see that each $ A_k $ leaves the subspace $ \bigcap _{n=1} ^\infty D(H^n) $ invariant, and thus 
\begin{align}
 \Psi \in D\left(A_1W(z_1-z_2)A_2\dots A_{m-1}W(z_{m-1}-z_m)A_m f(H) e^{iT(\pm1-i\e)}W(T(\pm1-i\e))\right) .
 \end{align} 
 Now let $ \zeta \in \C \backslash \R $. Then, we can rewrite 
\begin{align}\label{morning}
A_1W(z_1-z_2)A_2\dots A_{m-1}W(z_{m-1}-z_m)A_m
= \widetilde{A_1} W(z_1-z_2) \cdots \widetilde{A_{m}} W(z_{m-1}-z_m) (H-\zeta ) ^{\sum _k r_k }  
\end{align}
with 
\begin{align}
\widetilde{A_k}  :=  (H-\zeta )^{\sum _{j=1} ^{k-1} r_j } A_k (H-\zeta ) ^{-\sum _{j=1}^k r_j } , \q k=1,...,m.
\end{align}
Note that each of $ \widetilde{A_k} $'s and $W(z_{k-1}-z_k)$'s is a bounded operator by 
Theorem \ref{explicitThm} and Lemma \ref{bdd-ak}.
Then, by Lemma \ref{adiabatic}, one sees that for all $ n \ge 1 $,
\begin{align}\label{adiabatic-n}
\lim_{T\to\infty}(H- \zeta ) ^n e^{iT(\pm1-i\e)}W(T(\pm1-i\e)) \Psi= (E_0-\zeta)^n P_0\Psi=(H-\zeta)^n P_0\Psi,
\end{align}
which implies the desired result.
\end{proof}

\begin{proof}[Proof of Theorem \ref{GML}]
Put 
\begin{align}
\mathcal{O}(z_1,\dots,z_m):=A_1W(z_1-z_2)A_2\dots A_{m-1}W(z_{m-1}-z_m)A_m.
\end{align}
From Assumption \ref{Ass3}, one finds
\begin{align}
\Omega = \frac{P_0\Omega_0}{\norm{P_0\Omega_0}},
\end{align}
to obtain
\begin{align}\label{p-GML}
G_m(z_1,\dots,z_m)=e^{i(z_1-z_m)E_0}\frac{\Expect{P_0\Omega_0,  \mathcal{O}(z_1,\dots,z_m) P_0 \Omega_0}}{\Expect{P_0\Omega_0,
P_0\Omega_0}}.
\end{align} 
By Lemmas \ref{adiabatic} and \ref{tea}, we have
\begin{align}\label{was}
\frac{\Expect{P_0\Omega_0,  \mathcal{O}(z_1,\dots,z_m) P_0 \Omega_0}}{\Expect{P_0\Omega_0,
P_0\Omega_0}}&=
\lim_{T\to\infty}\frac{\Expect{e^{-iz_1^*(H-E_0)}W(T(-1-i\e))\Omega_0,  \mathcal{O}(z_1,\dots,z_m) e^{-iz_m(H-E_0)}W(T(1-i\e)) \Omega_0}}{\Expect{W(T(-1-i\e))\Omega_0,
W(T(1-i\e))\Omega_0}}.
\end{align}
Using Theorem \ref{explicitThm}, we find
\begin{align}
e^{-iz_1^*(H-E_0)}W(T(-1-i\e))&=e^{iz_1^*E_0}e^{-iz_1^*H_0}\overline{U(z_1^*,T(1+i\e))}e^{iT(1+i\e)H_0} \\
e^{-iz_m(H-E_0)}W(T(1-i\e))&=e^{iz_mE_0}e^{-iz_mH_0}\overline{U(z_m,-T(1-i\e))}e^{-iT(1-i\e)H_0}
\end{align}
on $\mathcal{D}$.
Therefore, by Theorem \ref{Time-order-ext} the numerator on the right-hand-side of  \eqref{was} can be rewritten as
\begin{align}\label{nu}
&e^{-i(z_1-z_m)E_0} \Expect{\Omega_0,\overline{U(T(1-i\e),z_1)}A_1(z_1)\overline{U(z_1,z_2)}\dots\overline{U(z_{m-1},z_m)}
A_m(z_m)\overline{U(z_m,-T(1-i\e))}\Omega_0}\no\\
&\q\q\q = e^{-i(z_1-z_m)E_0}\Expect{\Omega_0,TA_1(z_1)\dots A_m(z_m)\exp\left(-i\int_{\Gamma^\e_T}d\zeta\,H_1(\zeta)\right)\Omega_0}
\end{align}
and the denominater as
\begin{align}\label{de}
\Expect{\Omega_0, U(T(1-i\e),-T(1-i\e))\Omega_0}=\Expect{\Omega_0,T\exp\left(-i\int_{\Gamma^\e_T}d\zeta\,H_1(\zeta)\right)\Omega_0}.
\end{align}
Finally, inserting \eqref{was}, \eqref{nu}, and \eqref{de} into \eqref{p-GML}, we arrive at the 
Gell-Mann -- Low formula \eqref{GMLeq}.
\end{proof}

\if0

\section{Time-ordered exponential on complex plane}

In this section, we prove Theorems \ref{main1}, \ref{sandwich} and \ref{properties}.

\begin{Lem}\label{holo6} Under Assumption \ref{Ass1}, for all $ E\ge 0 $ and $ z\in \C $, $ H_1(z) $ maps $ V_E $ into $ V_{E+a} $. Moreover, for all $ \Psi \in D_\rm{fin} $, the vector-valued function $ \C \ni z\mapsto H_1(z) \Psi \in \H $ is analytic in the strong topology, and it follows that
\begin{align}\label{holo3} 
\frac{d}{dz} H_1(z) \Psi = e^{izH_0} [iH_0 , H_1] e^{-izH_0} \Psi , \q z \in \C ,
\end{align}
where $ [X,Y ] := XY -YX $.
\end{Lem}

\begin{proof} The first claim follows from Assumption \ref{Ass1} and the fact that for each $ z\in \C $ and $ E \ge0 $, $ e^{izH_0} $ leave each $ V_E $ invariant. 

Let $ \Psi \in V_E $ for some $ E \ge 0 $. Then, we can write as
\begin{align}\label{holo4}
H_1(z) \Psi = e^{iz H_0 E_{H_0} ([0, E+a]) }  H_1 E_{H_0} ([0, E]) e^{-iz H_0 E_{H_0} ([0, E]) } \Psi ,
\end{align}
By the functional calculus, $ H_0 E_{H_0} ([0, E+a]) $ and $ H_0 E_{H_0} ([0, E]) $ are bounded operators. By Assumption \ref{Ass1} (II), $ H_1E_{H_0} ([0, E]) $ is also bounded. Thus, we can differentiate the right-hand side of \eqref{holo4} and obtain \eqref{holo3}.
\end{proof}

We next prove the well-definedness of $ U_n (z,z') $.

\begin{Lem}\label{n2} Suppose that Assumption \ref{Ass1} holds. Then, for all $ z,z' \in \C $ and $ n \in \N $, the operator $ U_n (z,z') $ is well-defined and satisfies the following (i) and (ii):
\begin{enumerate}[(i)]
\item For all $ E\ge 0 $, $ U_n (z,z') $ maps $ V_E $ into $ V_{E + na} $.

\item For all $ \Psi \in D_\rm{fin} $, the vector-valued function $ U_n(z,z') \Psi $ is analytic with respect to each variable $ z , z' $, and it follows that
\begin{align}
& \frac{\pa }{\pa z} U_{n} (z,z') \Psi = -i H_1(z) U_{n-1} (z,z') \Psi , \label{n3} \\
& \frac{\pa }{\pa z'} U_{n} (z,z') \Psi = i U_{n-1} (z,z') H_1(z') \Psi . \label{n4}
\end{align}
\end{enumerate}
\end{Lem}

\begin{proof} We prove the claim by induction. In the case where $ n=1 $, well-definedness and the claim (i) follow from Lemma \ref{holo6}. The claim (ii) follows from the fundamental theorem of calculus.

Suppose that the claim is true for $ n =1,2,...,k $ for some $ k\ge 0 $. Let $ \Psi \in D_\rm{fin} $. Then, by the induction hypothesis (i), (ii) and Lemma \ref{holo6}, the vector-valued function $ H_1(z) U_k (z,z') \Psi $ is analytic with respect to each variable $ z,z' $, and the line integral 
\begin{align}
U_{k+1} (z,z') \Psi := -i \int _{z'} ^z d\zeta \, H_1(\zeta ) U_k (\zeta , z' ) \Psi 
\end{align}
does not depend on the path from $ z' $ to $ z $. This yields \eqref{n3} for $ n=k+1 $. 

For all $ E\ge 0 $ and $ \Psi \in V_E $, it follows that $ H_1 (\zeta ) U_k (\zeta , z' ) \Psi \in V_{E+ (k+1)a } $ from the induction hypothesis (i) and Lemma \ref{holo6}, and thus $ U_{k+1} (z,z') \Psi \in V_{E+ (k+1)a } $ because $ V_{E+ (k+1)a } $ is a closed subspace. Hence, (i) holds for $ n=k+1 $.

By using the induction hypothesis (ii), we have
\begin{align*}
\frac{\pa }{\pa z'} U_{k+1} (z,z') \Psi & = i H_1(z') U_k (z' ,z') \Psi -i \int _{z'} ^z d\zeta H_1(\zeta ) i U_{k-1} (\zeta ,z') H_1(z') \Psi \\
& = iU_k (z,z') H_1(z') \Psi .
\end{align*}
Thus, the claim remains true for $ n=k+1 $, and this completes the proof.
\end{proof}

\begin{Lem} Suppose that Assumption \ref{Ass1} holds. Then, for all $ n \ge 0 $, $ E \ge 0 $ and $ \Psi \in V_E $, the following estimate holds for some $\Psi$ dependent constant $C\ge 0$
\begin{align}\label{bound1}
\| U_n (z,z') \Psi \| \le C^n\frac{|z-z'| ^n}{n!}  \big( E+(n-1)a +1 \big) ^{1/2} \cdots \big( E +1 \big) ^{1/2}  \| \Psi \| ,
\end{align}
where $ L(z,z') \subset \C $ is the line segment from $ z' $ to $ z $. In the case that $ n=0 $, we regard the right-hand side as $ \| \Psi \| $.
\end{Lem}

\begin{proof} We prove the claim by induction. The case $ n=0 $ is trivial. Assume that \eqref{bound1} holds for some $ n \ge 0 $. We choose the path $ C $ from $ z' $ to $ z $ as 
\begin{align}
C(t) = z' + (z-z') t , \q t\in [0,1] .
\end{align}
Then, we have $ C' (t) := (d/dt) C(t) = z-z' $ and
\begin{align*}
\| U_{n+1} (z,z') \Psi \| & = \Big\| \int _C d\zeta \, H_1(\zeta ) U_n (z,z') \Psi \Big\| \\
& = \Big\| \int _0 ^1 dt \, C' (t) H_1(C(t) ) U_n (C(t) ,z') \Psi \Big\| \\
& \le \int _0 ^1 dt \, |z-z'| \, F( C(t)) \| (H_0 +1) ^{1/2} U_n (C(t) , z') \Psi \| \\
& \le  |z-z'| \, \Big( \sup _{\zeta \in L(z,z')} F (\zeta ) \Big) \int _0 ^1 dt \, ( E+ na +1 ) ^{1/2} \| U_n ( C(t) ,z') \| \\
& \le  |z-z'| \Big( \sup _{\zeta \in L(z,z')} F (\zeta ) \Big) ^{n+1} \int _0 ^1 dt \frac{|C(t) - z'| ^n }{n!} ( E+na +1 ) ^{1/2} \cdots ( E +1 ) ^{1/2}  \| \Psi \| \\
& = \frac{|z-z'| ^{n+1}}{(n+1)!} \Big( \sup _{\zeta \in L(z,z')} F (\zeta ) \Big) ^{n+1} (E+na +1) ^{1/2} \cdots ( E +1 ) ^{1/2}  \| \Psi \| ,
\end{align*}
where we used Assumption \ref{Ass1} (II) in the first inequality, and Lemma \ref{n2} (i) in the second inequality, and the induction hypothesis in the third inequality. Thus, \eqref{bound1} holds for $ n+1 $ as well, and the induction step is complete.
\end{proof}

\begin{Lem}\label{adjointLem0} Under Assumption \ref{Ass1}, it follows that
\begin{align}\label{adjoint-eq}
H_1(z) ^* \Psi = H_1(z^*) \Psi
\end{align}
for all $ z\in \C $ and $ \Psi \in D_\rm{fin} $. In particular, $ H_1(z) $ is closable.
\end{Lem}

\begin{proof} It follows from the definition of $ H_1(z) $ that the inclusion relation $ H_1(z) ^* \supset e^{iz^*H_0} H_1 ^* e^{-iz^* H_0} $. By the symmetricity of $ H_1 $, we obtain \eqref{adjoint-eq}.
\end{proof}

\begin{Lem}\label{boundLem2} Under Assumption \ref{Ass1}, for all $ z,z' \in \C $ and $ \Psi \in D_\rm{fin} $, the followings hold.
\begin{align}
& \sum _{n=0} ^\infty \| U_n (z,z') \Psi \| < \infty , \label{bound2} \\
& \sum _{n=0} ^\infty \| H_1(z) U_n (z,z') \Psi \| < \infty , \label{bound3} \\
& \sum _{n=0} ^\infty \| U_n (z,z') H_1(z') \Psi \| < \infty . \label{bound4}
\end{align}
Further, the convergences are uniform in $(z,z')$ on any compact set. 
\end{Lem}

\begin{proof} It is not difficult to check \eqref{bound2}-\eqref{bound4} by using Lemma \ref{boundLem}, Assumption \ref{Ass1} (II) and d'Alembert's ratio test, so we omit the proof. By Lemma \ref{boundLem}, the convergences
are uniform with respect to $ z,z' $ on any compact set.
\end{proof}

\begin{proof}[proof of Theorem \ref{main1}] By using Lemma \ref{holo6}, we can write as
\begin{align}
U_n (z,z') \Psi = (-i)^n \int _{z'} ^z d\zeta _1 ... \int _{z'} ^{\zeta _{n-1}} d\zeta _n \, H_1(\zeta _1) \cdots H_1(\zeta _n) \Psi .
\end{align}

Let us now set 
\begin{align}
S_n (z,z') \Psi := \sum _{j=0} ^n U_j (z,z') \Psi , \q \Psi \in D_\rm{fin} , \q z,z' \in \C .
\end{align}
Then, by Lemma \ref{n2} (ii), the vector-valued function $ S_n (z,z') \Psi $ is analytic with respect to each variable $ z,z' $ and satisfies
\begin{align}
& \frac{\pa }{\pa z} S_n (z,z') \Psi = -i H_1(z) S_n (z,z') \Psi , \\
& \frac{\pa }{\pa z' } S_n (z,z') \Psi = iS_n (z,z') H_1(z') \Psi . 
\end{align}

Using Lemma \ref{boundLem2} \eqref{bound2}, we can define operators $ U(z,z') \; (z,z' \in \C ) $ by
\begin{align}\label{US}
D(U(z,z')) = D_\rm{fin}, \q U(z,z') \Psi = \lim _{n\to \infty } S_n (z,z') \Psi , \q \Psi \in D_\rm{fin}  .
\end{align}

We prove (i). 
The convergence in \eqref{US} is uniform in $(z,z')$ on any compact set by Lemma \ref{boundLem2}. Hence, from the general theory of analytic functions, the vector-valued function $ U(z,z') \Psi $ is analytic, and the derivatives $ (\pa / \pa z) S_n (z,z') \Psi $ and $ (\pa / \pa z') S_n (z,z') \Psi $ converge to $ (\pa / \pa z) U (z,z') \Psi $ and $ (\pa / \pa z') U(z,z') \Psi $, respectively. From \eqref{bound3}, \eqref{bound4} and Lemma \ref{adjointLem0}, it follows that $ U(z,z') \Psi \in D(\overline{H_1(z)}) $ and 
\begin{align}
H_1(z) S_n (z,z') \Psi \to \overline{H_1(z)} U(z,z') \Psi , \q S_n (z,z') H_1(z') \Psi \to U(z,z') H_1(z) \Psi 
\end{align}
as $ n \to \infty $. Therefore, we obtain \eqref{main1-1} and \eqref{main1-2}.

To prove (ii), we show by induction that, for all integer $ n \ge0 $,
\begin{align}\label{adjoint1} 
U_n (z,z') ^* \Psi = U_n (z' {}^* , z^*) \Psi , \q \Psi \in D_\rm{fin} .
\end{align}
The case $ n=0 $ is trivial. Assume that \eqref{adjoint1} holds for some $ n $. Let $ C : [0,1] \to \C $ be a continuously differentiable simple curve from $ z' $ to $ z $. Then, we have for all $ \Psi , \Phi \in D _\rm{fin} $,
\begin{align*}
\Expect{ \Psi , U_{n+1} (z,z') \Phi  } & = -i \int _{z'} ^z d\zeta \Expect{ \Psi , H_1(\zeta ) U_n (\zeta , z') \Phi } \\
& = -i \int _0 ^1 dt \, C' (t) \Expect{ \Psi , H_1(C(t)) U_n (C (t),z') \Phi } \\
& = \Expect{ i \int _0 ^1 C' (t) ^* U_n (z' {}^* , C(t) ^*) H_1(C(t) ^*) \Psi , \Phi } \\
& = \Expect{ i \int _{z' {}^*} ^{z^*} d\zeta \, U_n (z' {}^* , \zeta ) H_1(\zeta ) \Psi , \Phi } \\
& = \Expect{ U_{n+1} (z' {}^* , z^*) \Psi , \Phi } ,
\end{align*}
where we used the induction hypothesis and Lemma \ref{adjointLem0} in the third equality. Thus, \eqref{adjoint1} holds for $ n+1 $, so the induction step is complete. 

By using \eqref{adjoint1}, we have for all $ \Psi , \Phi \in D_\rm{fin} $,
\begin{align*}
\Expect{ \Psi , U(z,z') \Phi } & = \sum _{n=0} ^\infty \Expect{\Psi , U_n (z,z') \Phi } \\
& = \sum _{n=0} ^\infty \Expect{ U_n (z' {}^* , z^*) \Psi , \Phi } \\
& = \Expect{ U(z' {}^* , z^*) \Psi , \Phi } .
\end{align*}
This yields the inclusion relation
\begin{align}\label{pre-adjoint}
U (z,z') ^* \supset U(z' {}^* ,z^*) .
\end{align}
Hence, we see that $ U(z,z') $ is closable for all $ z,z' \in \C $. Therefore, we can take the closure of the both sides of  \eqref{pre-adjoint}, and the desired result follows.
\end{proof}

\begin{proof}[proof of Theorem \ref{sandwich}] We prove the claim by induction on $ m \ge 1 $. 

Let $ m=1 $, and let $ \Psi \in D_\rm{fin} $. By the definition of $ \mathcal{C}_0 $-class, one can prove that
\begin{align}
\sum _{n=0} ^\infty \| T_1 U_n (z_1, z_1') \Psi \| < \infty ,
\end{align}
in the same manner as in Lemma \ref{boundLem2}. Then, by the closedness of $ T_1 $, we get $ U(z_1, z_1') \Psi \in D(T_1) $ and \eqref{sandwich1} for $ m=1 $.

Suppose that the claim is true for some $ m $. Let $ \Psi \in D_\rm{fin} $. By using the definition of $ \mathcal{C}_0 $-class and Lemma \ref{boundLem}, it is straightforward to see that
\begin{align}
& \sum _{ n_1 , ... , n_m  = 0} ^\infty \| T_m U_{n_m}(z_m ,z_m') \cdots T_1 U_{n_1}(z_1 , z_1') \Psi \| < \infty , \label{sandwich2} \\
& \sum _{ n_1 , ... , n_m  = 0} ^\infty \| U_{n_m}(z_m ,z_m') \cdots T_1 U_{n_1}(z_1 , z_1') \Psi \| < \infty  . \label{sandwich3}
\end{align}
Hence, using the closedness of $ T_m $, the closability of $ U_{n_m}(z_m ,z_m') $ and the induction hypothesis, we have $ T_m \overline{U(z_m ,z_m')} \cdots T_1 \overline{U(z_1 , z_1')} \Psi \in D(T_m \overline{U(z_m , z_m ')}) $ and \eqref{sandwich1} for $ m+1 $. Thus, the assertion holds for $ m+1 $, and the induction step is complete.
\end{proof}

\begin{proof}[proof of Theorem \ref{properties}] 
\begin{enumerate}[(i)]
\item Considering Theorem \ref{sandwich} in the case that $ m=2 $ and $ T_1=T_2 = I $, we have $ D_\rm{fin} \subset D(\overline{U(z,z')} U(z' , z'')) $. By Theorem \ref{main1}, for all $ \Phi \in D_\rm{fin} $ and $ z,z',z'' \in \C $,
\begin{align*}
& \frac{d}{dz'} \Expect{ \Phi , \overline{U(z,z')} U(z' , z'') \Psi } \\
= & \frac{d}{dz'} \Expect{ U(z' {}^* ,z ^*) \Phi , U(z' , z'') \Psi } \\
= & \Expect{ -i \overline{H_1(z' {}^* )} \, U(z' {}^* , z^*) \Phi , U(z' , z'') \Psi } + \Expect{ U(z' {}^* , z^*) \Phi , -i \overline{ H_1(z')} U(z' , z'') \Psi } \\
= & \sum _{m =0} ^\infty \sum _{n=0} ^\infty \Big( \Expect{ -i H_1 (z' {}^* ) \, U_m (z' {}^* , z^*) \Phi , U_n(z' , z'') \Psi } + \Expect{ U_m(z' {}^* , z^*) \Phi , -i H_1(z') U_n (z' , z'') \Psi } \Big) \\
= & 0 .
\end{align*}
This yields 
\begin{align}\label{pre-gr}
\Expect{ \Phi , \overline{U(z,z')} U(z' , z'') \Psi } = \Expect{ \Phi , U(z,z'') \Psi } , \q z,z',z'' \in \C .  
\end{align}
Taking $z'=z$ we have the desired result. 

\item We first show by induction on $ n $ that
\begin{align}\label{parallel1}
e^{izH_0} U_n (z' , z'') e^{-izH_0} \Psi = U_n (z' +z , z'' +z)  \Psi , \q \Psi \in D _\rm{fin} .
\end{align}
The case $ n=0 $ is trivial. Assume that \eqref{parallel1} holds for some $ n $. Then, we have for all $ \Psi \in D _\rm{fin} $,
\begin{align*}
e^{izH_0} U_{n+1} (z',z'') e^{-izH_0} \Psi & = -i \int _{z''} ^{z'} d\zeta e^{izH_0 } H_1(\zeta ) U_n (\zeta , z'') e^{-izH_0} \Psi \\
& = -i \int _{z''} ^{z'} d\zeta H_1(\zeta + z) U_n (\zeta + z , z'' +z) \Psi \\
& = -i \int _{z''+z} ^{z'+z} d\zeta H_1(\zeta ) U_n (\zeta , z'' +z) \Psi \\
& = U_{n+1} (z' +z , z'' +z) \Psi ,
\end{align*}
where we used the basic property $ e^{izH_0} H(\zeta ) e^{-izH_0} \Phi = H(\zeta + z) \Phi \; (\Phi \in D_\rm{fin}) $ 
in the second equality and the induction hypothesis in the third equality. Thus, the induction step is complete. 

Summing up the both sides of \eqref{parallel1} over all $ n \ge 0 $, and using the closedness of $ e^{izH_0} $, we obtain the desired result.

\item Similar to the proof of \cite[Theorem 2.4]{FutakuchiUsui2013}.
\end{enumerate}
\end{proof}

\fi
\if0
\section{Gell-Mann -- Low formula}

In this section, we prove Theorems \ref{explicitThm} and \ref{GML}.

\begin{proof}[proof of Theorem \ref{explicitThm}] We first prove \eqref{explicit11}. 
First, note that if $z,z'$ are real, $U(t,t')$ is exactly the same object as
is constructed in \cite{FutakuchiUsui2013}. Thus, \eqref{explicit100} holds.
Next, let $ \rm{Im} z < 0 $. Then, both $ e^{-izH_0} $ and $ e^{-izH} $ are bounded operators. Hence, we have for all $ \Psi , \Phi \in D_\rm{fin} $,  
\begin{align}\label{explicit1}
\frac{d}{dz} \Expect{ \Phi , e^{-izH_0} U(z,0) \Psi } & = \frac{d}{dz} \Expect{ e^{iz^* H_0} \Psi , U(z,0) \Psi } \no \\
& = \Expect{ iH_0 e^{iz^* H_0} \Phi , U(z,0) \Psi } +\Expect{ e^{iz^* H_0} \Phi ,  -i \overline{H(z)} \; U(z,0) \Psi } \no \\
& = \Expect{ iH_0 \Phi , e^{-izH_0} U(z,0)\Psi} +\Expect{ \Phi ,  -i H_1 e^{-izH_0} \,  U(z,0) \Psi } .
\end{align}
By using \eqref{main1-1}, we have
\begin{align}\label{explicit2}
\frac{d}{dz} U(0,-z) e^{-izH_0} \Psi & = U(0,-z) (-iH_1(-z) - iH_0) e^{-izH_0} \Psi \no \\
& = U(0,-z) e^{-izH_0} (-iH) \Psi .
\end{align}
By Theorem \ref{properties} (ii), we see that $ e^{-izH_0} U(z,0) \Psi = U(0,-z) e^{-izH_0} \Psi $. Hence, comparing \eqref{explicit1} and \eqref{explicit2}, we get $ e^{-izH_0} U(z,0) \Psi \in D(H_0) $ and 
\begin{align}\label{diff-U}
\frac{d}{dz} e^{-izH_0} U(z,0) \Psi = -iH U(z,0) \Psi .
\end{align}
On the other hand, one can easily show that for all $ \Phi \in D_0 (H) := \bigcup _{R \ge 0} E_{H} ([-R , R ]) $,
\begin{align}
\frac{d}{dz} e^{-izH} \Phi = -iH e^{-izH} \Phi .
\end{align}
Combining this with \eqref{diff-U}, we have
\begin{align}
\Expect{e^{-iz^*H} \Phi , e^{-izH_0} U(z,0) \Psi } = \Expect{\Phi , \Psi } .
\end{align}
for all $ \Phi \in D_0 (H) $ and $ \Psi \in D _\rm{fin} $. The above $ \Psi $ can be extended onto $ D(e^{-iz^* H}) $ since $ D_0 (H) $ is a core of $ e^{-iz^*H} $. Hence, we have $ e^{-izH_0} U(z,0) \Phi \in D(e^{izH}) $ and 
\begin{align}\label{explicit3}
e^{izH} e^{-izH_0} U(z,0) \Psi = \Psi , \q  \Psi \in D_\rm{fin} .
\end{align}
By multiplying $ e^{-izH} $ and $ e^{izH_0} $ to the both sides in order, we obtain \eqref{explicit11}. 

We next prove \eqref{explicit12}. By Theorem \ref{properties} (ii) and \eqref{explicit3}, we have
\begin{align}\label{explicit4}
e^{izH} U(0,-z) e^{-izH_0} \Psi = \Psi , \q \Psi \in D_\rm{fin} .
\end{align}
By considering the case $ \Psi = e^{izH_0} \Phi \; (\Phi \in D_\rm{fin}) $, we have
\begin{align}\label{explicit5}
U(0,-z) \Phi = e^{-izH} e^{izH_0}\Phi ,
\end{align}
which implies \eqref{explicit12}. 

From \eqref{explicit11}, \eqref{explicit12} and Theorem \ref{properties} (iii), it follows that
\begin{align}
& \overline{U(t,0)} = e^{itH} e^{-itH_0} , \q t \in \R , \\
& \overline{U(0,t')} = e^{it'H_0} e^{-it'H} , \q t ' \in \R .
\end{align}
Combining these with \eqref{UUU}, we obtain \eqref{explicit100}.
\end{proof}

In order to prove Theorem \ref{GML}, we need some more lemmas. 

\begin{Lem}\label{adiabatic} Suppose that Assumptions \ref{Ass2} and \ref{Ass3} hold. Then, for all $ \e >0 $,
\begin{align}\label{adiabatic11}
 \Omega = \lim _{z \to \infty (\pm 1 + i \e )} \frac{e^{izH} e^{-izH_0} \Omega _0}{e^{izE_0(H)} \Expect{\Omega , \Omega _0}} .
\end{align}
In particular, it follows that
\begin{align}\label{adiabatic12}
| \Expect{ \Omega , \Omega _0 } | ^2 = \lim _{\substack{ z \to \infty (1 + i\e ) \\ z' \to \infty (-1 +i\e )}} \frac{ \Expect{e^{izH} e^{-izH_0} \Omega _0 , e^{iz'H} e^{-iz'H_0} \Omega _0} }{e^{-iz^* E_0(H)} e^{iz'E_0(H)}} .
\end{align}
\end{Lem}

\begin{proof} Let $ \rm{Im} z \ge 0 $. The functional calculus for $ H $ implies that
\begin{align}
e^{izH} \Omega _0 = e^{izE_0 (H)} \Expect{\Omega , \Omega _0} \Omega +  \int _{(E_0 (H) , \infty )} e^{iz\la } dE_H (\la ) \Omega _0 .
\end{align}
By dividing the both sides by $ e^{izE_0 (H)} \Expect{\Omega , \Omega _0} $, we have
\begin{align}
\Omega = \frac{e^{izH} \Omega _0}{ e^{izE_0 (H) }\Expect{\Omega , \Omega _0} } - \frac{1}{\Expect{\Omega , \Omega _0}}\int _{(E_0 (H) , \infty )} e^{iz (\la -E_0(H)) } dE_H (\la ) \Omega _0 .
\end{align}
By applying the Lebesgue dominated convergence theorem, one can see that the second term on the right-hand side converges to zero as $ z \to \infty (\pm 1 + i\e ) $. The first term can be written as
\begin{align}
 \frac{e^{izH} e^{-izH_0} \Omega _0}{ e^{izE_0 (H) }\Expect{\Omega , \Omega _0} } 
\end{align}
because of Assumption \ref{Ass3} (i). Thus, we obtain \eqref{adiabatic11}.

\eqref{adiabatic12} follows directly from \eqref{adiabatic11}.
\end{proof}

\begin{Lem}\label{GML-Lem} Suppose that Assumptions \ref{Ass2} and \ref{Ass3} hold. Let $ \phi ^{(k)} \; (k=1,..., m, \; m\ge 1) $ be closed operators on $ \H $. Assume that for each $ k $, there exist an integer $ r_k \ge 0 $ such that for all $ n\in \N $, $ \phi ^{(k)} $ maps $D(H^{n+r_k}) $ into $ D(H^n) $. Then, for all $ \e >0 $ and $ t_k \in \R \; (k=1,...,m) $,
\begin{align}\label{morning1}
\phi ^{(1)} (t_1) \cdots \phi ^{(m)} (t_m) \Omega = \lim _{z \to \infty (\pm 1 + i\e )} \phi ^{(1)} (t_1) \cdots \phi ^{(m)} (t_m) \frac{ e^{izH} e^{-izH_0} \Omega _0 }{ e^{izE_0 (H)} \Expect{ \Omega , \Omega _0 } } ,
\end{align}
where
\begin{align}
\phi ^{(k)} (t_k) :=e^{it_kH} \phi ^{(k)} e^{-it_k H} .
\end{align}
In particular, it follows that
\begin{align}
& \Expect{ \Omega , \phi ^{(1)} (t_1) \cdots \phi ^{(m)} (t_m) \Omega } \no \\
& \q = \lim _{ \substack{ z \to \infty (1+i\e ) \\ z' \to \infty (-1 + i \e )} } \frac{ \Expect{ e^{izH} e^{-izH_0} \Omega _0 , \phi ^{(1)} (t_1) \cdots \phi ^{(m)} (t_m) e^{iz'H} e^{-iz'H_0} \Omega _0 } }{\Expect{e^{izH} e^{-izH_0} \Omega _0 , e^{iz'H} e^{-iz'H_0} \Omega _0}  } . \label{preGML}
\end{align}
\end{Lem}

\begin{proof} Under the present assumption, we see that each $ \phi ^{(k)} $ leaves the subspace $ \bigcap _{n=1} ^\infty D(H^n) $ invariant, and thus $ \Omega \in D(\phi ^{(1)} (t_1) \cdots \phi ^{(m)} (t_m)) $. Now let $ \la \in \C \backslash \R $. Then, we can write as
\begin{align}\label{morning}
\phi ^{(1)} (t_1) \cdots \phi ^{(m)} (t_m) \Omega = \widetilde{\phi} ^{(1)} (t_1) \cdots \widetilde{\phi} ^{(m)} (t_m) (H-\la ) ^{\sum _k r_k } \Omega 
\end{align}
with 
\begin{align}
\widetilde{\phi} ^{(k)} (t_k) :=  (H-\la )^{\sum _{j=1} ^{k-1} r_j } \phi ^{(k)} (t_k) (H-\la ) ^{-\sum _{j=1}^k r_j } , \q k=1,...,m.
\end{align}
where each $ \widetilde{\phi} ^{(k)} (t_k) $ is a bounded operator from the present assumption. In the same way as in the proof of Lemma \ref{adiabatic}, one can prove that for all $ r \ge 0 $,
\begin{align}\label{adiabatic-n}
(H- \la ) ^n \Omega = \lim _{z \to \infty (\pm 1 + i \e )} \frac{ (H- \la ) ^n e^{izH} e^{-izH_0} \Omega _0}{e^{izE_0(H)} \Expect{\Omega , \Omega _0}} 
\end{align}
Combining this with \eqref{morning}, we obtain \eqref{morning1}.

\eqref{preGML} follows from \eqref{morning1} and Lemma \ref{adiabatic}.
\end{proof}

\begin{proof}[proof of Theorem \ref{GML}] Using Theorem \ref{explicitThm} \eqref{explicit100}, we have
\begin{align}
\phi ^{(k)}(t_k) = \overline{U(0,t_k )} \phi ^{(k)} _\rm{I} (t_k) \overline{U(t_k ,0)} . \label{pic-change}
\end{align}
Hence, by Lemma \ref{GML-Lem} and Theorem \ref{explicitThm}, we obtain
\begin{align*}
& \Expect{ \Omega , \phi ^{(1)} (t_1) \phi ^{(2)} (t_2)\cdots \phi ^{(m)} (t_m) \Omega } \no \\
& \q = \lim _{ \substack{ z \to \infty (1+i\e ) \\ z' \to \infty (-1 + i \e )} } \frac{ \Expect{ U(0,z) \Omega _0 , \overline{U(0,t_1)} \phi ^{(1)} _\rm{I} (t_1) \overline{U(t_1 , t_2)} \phi ^{(2)} _\rm{I} (t_2) \cdots \overline{U(t_{n-1} , t_n)} \phi ^{(n)} _\rm{I} (t_n) U(t_n ,z') \Omega _0 } }{\Expect{ U(0,z) \Omega _0 , U(0,z') \Omega _0}  } . 
\end{align*}
Using Theorem \ref{main1} (ii) and Lemma \ref{sandwich}, we see that the right-hand side can be rewritten as
\begin{align*}
\lim _{ \substack{ z \to \infty (1+i\e ) \\ z' \to \infty (-1 + i \e )} } \frac{ \Expect{ \Omega _0 , \overline{U(z^* , 0)} \; \overline{U(0,t_1)} \phi ^{(1)} _\rm{I} (t_1) \overline{U(t_1 , t_2)} \phi ^{(2)} _\rm{I} (t_2) \cdots \overline{U(t_{n-1} , t_n)} \phi ^{(n)} _\rm{I} (t_n) U(t_n ,z') \Omega _0 } }{\Expect{ \Omega _0 , \overline{U(z^* , 0)} U(0,z') \Omega _0}  } . 
\end{align*}
Therefore, by using Theorems \ref{sandwich} and \ref{properties} (i), we get the desired result.
\end{proof}

\fi
\section{Application to QED}\label{QEDsection}

In this section we apply the abstract results obtained in the preceding sections to QED with several regularizations in the Coulomb gauge. Our main goal here is Theorem \ref{GML-QED}, which shows that QED with regularizations satisfies Gell-Mann -- Low formula. To prove this, it is sufficient to see that the conditions of Theorem \ref{GML} hold. Under suitable hypotheses, it is not difficult to prove that the interaction Hamiltonian and each field operator are in $ \mathcal{C}_0 $-class (Lemmas \ref{field-spec} and \ref{QEDass1}). However, the existence of the ground state (Assumption \ref{Ass3}) and the condition (II) of Theorem \ref{GML} are not obvious at all. The existence of the ground state is discussed in \cite{MR2541206}. To check the condition (II), we need some preliminaries (Lemmas \ref{prelimLem1}-\ref{preD2}).


\subsection{Fock spaces}\label{subsection1}

Let $ \H $ be a complex separable Hilbert space, and $ \ot ^n \H \; (n \in \N ) $ the $ n $-fold tensor product of $ \H $. Let $ \mathfrak{S} _n $ be the symmetric group of order $ n $ and $ U_\sigma \; (\sigma \in \mathfrak{S} _n) $ be a unitary operator on $ \ot ^n \H $ such that
\begin{align}
U_\sigma (\psi _1 \otimes \cdots \otimes \psi _n ) = \psi _{\sigma (1)} \otimes \cdots \otimes \psi _{\sigma (n)} , \q \psi _j \in \H , \q j=1,..., n .
\end{align}
Then, the symmetrization operator $ S_n $ and the anti-symmetrization operator $ A_n $ are defined by
\begin{align}\label{symmetrization}
S_n := \frac{1}{n!} \sum _{\sigma \in \mathfrak{S}_n } U_\sigma , \q A_n := \frac{1}{n!} \sum _{\sigma \in \mathfrak{S}_n } \sgn (\sigma ) U_\sigma ,
\end{align}
where $ \sgn (\sigma ) $ is the signature of the permutation $ \sigma $. The operators $ S_n $ and $ A_n $ are orthogonal projections on $ \ot ^n \H $. Hence, the subspaces
\begin{align}
\ot _\rm{s} ^n \H := S_n \big( \ot ^n \H \big) , \q \wg ^n \H := A_n \big( \ot ^n \H \big) 
\end{align}
are Hilbert spaces. 
We set $ \ot _\rm{s} ^0 \H := \C , \; \wg ^0 \H := \C $, and define
\begin{align}
\F _\bb (\H ) := \op _{n=0} ^\infty \ot _\rm{s} ^n \H , \q \F _\ff (\H ) := \op _{n=0} ^\infty \wg ^n \H .
\end{align}
$ \F _\bb (\H ) $ (resp. $ \F _\ff (\H ) $) is called the Boson (resp. Fermion) Fock space over $ \H $.

\subsection{Second quantization operators}

For a densely defined closable operator $ T $ on $ \H $ and $ j= 1,...,n $, we define a linear operator $ \widetilde{T}_j  $ on $ \ot ^n \H $ by
\begin{align}
\widetilde{T}_j := I \otimes \cdots \otimes I \otimes \stackrel{j\text{-th}}{T} \otimes I \otimes \cdots \otimes I .
\end{align}
For each integer $ n \ge 0 $, we define a linear operator $ T^{(n)} $ on $ \ot ^n $ by
\begin{align}
T^{(0)} := 0 , \q T^{(n)} := \overline{\sum _{j=1} ^n \widetilde{T}_j \upharpoonright \hot ^n D(T)} , \q n \ge 1,
\end{align}
where $ \hot ^n D(T) $ denotes the $ n $-fold algebraic tensor product of $ D(T) $. Then, the infinite direct sum operator
\begin{align} 
\dd \Ga (T) := \op _{n=0} ^\infty T ^{(n)} 
\end{align}
on $ \F (\H ) $ is called the \textit{second quantization of $ T $}. If $ T $ is non-negative self-adjoint, then so is $ \dd \Ga (T) $. It is easy to see that $ T^{(n)} $ is reduced by $ \ot _\rm{s} ^n \H $ and $ \wg ^n \H $ respectively. We denote the reduced part of $ T^{(n)} $ to $ \ot _\rm{s} ^n \H $ and $ \wg ^n \H $ by $ T^{(n)} _\bb $ and $ T^{(n)}_\ff $ respectively. We set 
\begin{align} 
\dd \Ga _\bb (T) := \op _{n=0} ^\infty T ^{(n)} _\bb , \q \dd \Ga _\ff (T) := \op _{n=0} ^\infty T ^{(n)} _\ff .
\end{align}
The operator $ \dd \Ga _\bb (T) $ (resp. $ \dd \Ga _\ff (T) $) is called the boson (resp. fermion) second quantization operator. 

For a densely defined closable operator $ T $ on $ \H $, we define a linear operator $ \Ga (T)  $ on $ \F ( \H ) $ by
\begin{align} 
\Ga (T) := \op _{n=0} ^\infty \big( \ot ^n T \big) .
\end{align}
We denote the reduced part of $ \Ga (T) $ to $ \F _\bb (\H ) $ and $ \F _\ff (\H ) $ by $ \Ga _\bb (T) $ and $ \Ga _\ff (T) $ respectively.

\begin{Lem}\label{2-33} Let $ K_j \; (j=1,..., n , \; n \ge 1) $ be strongly commuting self-adjoint operators on a Hilbert space $ \H $, and let $ E^n := E_{K_1} \times \cdots \times E_{K_n} $ be a product measure. Set
\begin{align}
P(J) := E^n \Big( \big\{ \la = (\la _1 , ... , \la _n ) \in \R ^n \; \Big| \; \sum _{j=1} ^n \la _j \in J  \big\} \Big) , \q J \in \B ^1 .
\end{align}
Then, $ \{ P(J) \; | \; J \in \B ^1 \} $ is the spectral measure of a self-adjoint operator $ \overline{\sum _{j=1} ^n K_j} $,
where $\B^1$ denotes the set of all the Borel measurable sets in $\R$.
\end{Lem}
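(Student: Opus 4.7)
The plan is to proceed in three stages: first verify that $\{P(J)\}_{J\in\mathcal{B}^1}$ is a projection-valued measure on $\R$; second, let $A$ be the self-adjoint operator it determines via the spectral theorem and compute $A$ explicitly as a joint spectral integral; third, identify $A$ with $\overline{\sum_j K_j}$.

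For the first stage, I would observe that since the $K_j$ strongly commute, the product spectral measure $E^n$ on $(\R^n, \mathcal{B}^n)$ is a well-defined projection-valued measure. The map $s:\R^n\to\R$, $s(\lambda_1,\dots,\lambda_n):=\lambda_1+\cdots+\lambda_n$, is Borel measurable, so $P := s_*E^n$, i.e.\ $P(J)=E^n(s^{-1}(J))$, inherits $\sigma$-additivity, the idempotent/self-adjoint projection property, and the normalization $P(\R)=I$ directly from $E^n$. Hence by the spectral theorem there exists a unique self-adjoint operator
\begin{align}
A=\int_\R \lambda\,dP(\lambda)
\end{align}
with spectral family $P$. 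A standard change-of-variables argument (verified first on bounded Borel functions and then extended) gives
\begin{align}
A=\int_{\R^n}\Bigl(\sum_{j=1}^n\lambda_j\Bigr)\,dE^n(\lambda),
\end{align}
on the domain $D(A)=\{\Psi\in\H\mid \int_{\R^n}(\sum_j\lambda_j)^2\,d\|E^n(\cdot)\Psi\|^2<\infty\}$.

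For the identification step, I introduce the dense subspace
\begin{align}
D_\mathrm{b}:=\bigcup_{L>0} R\bigl(E^n([-L,L]^n)\bigr),
\end{align}
which is contained in $D(K_j)$ for every $j$ (so in $D(\sum_j K_j)$), in $D(A)$, and which is invariant under every $E^n(B)$ with $B\subset[-L,L]^n$. On $D_\mathrm{b}$ all operators are bounded in the corresponding functional calculus, so $K_j\Psi=\int_{\R^n}\lambda_j\,dE^n(\lambda)\Psi$ and summation may be interchanged with the integral, giving $\sum_{j=1}^n K_j\Psi = A\Psi$ for every $\Psi\in D_\mathrm{b}$. Moreover, $D_\mathrm{b}$ is a core for $A$ because the vectors $E^n([-L,L]^n)\Psi$ converge to $\Psi$ together with $AE^n([-L,L]^n)\Psi\to A\Psi$ for $\Psi\in D(A)$. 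Consequently $\overline{(\sum_j K_j)|_{D_\mathrm{b}}}=A$, and since $\sum_j K_j$ on its natural domain $\bigcap_j D(K_j)$ is a symmetric extension of $(\sum_j K_j)|_{D_\mathrm{b}}$, its closure is a closed symmetric extension of the self-adjoint operator $A$, hence equal to $A$. This yields $\overline{\sum_{j=1}^n K_j}=A$, and consequently $\{P(J)\}_{J\in\mathcal{B}^1}$ is exactly its spectral measure.

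The main technical nuisance, and the only point requiring real care, is justifying that $D_\mathrm{b}$ is a core for $A$ and that the formal manipulation $\sum_j\int\lambda_j\,dE^n=\int(\sum_j\lambda_j)\,dE^n$ is valid on $D_\mathrm{b}$; both follow from the fact that on $D_\mathrm{b}$ every integrand is bounded and $E^n$-integrable, so one can invoke the linearity of the bounded functional calculus for $E^n$ and dominated convergence. Everything else is bookkeeping from the spectral theorem and the definition of the product spectral measure.
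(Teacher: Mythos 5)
Your proof is correct. The paper itself does not prove this lemma but simply refers to Arai's Fock space monograph (Lemma 2-33); your argument is the standard spectral-theoretic demonstration one would supply in its place, and every step checks out: the pushforward $P=s_*E^n$ is a projection-valued measure, the change-of-variables identity $A=\int_{\R^n}\bigl(\sum_j\lambda_j\bigr)\,dE^n$ holds, $D_\mathrm{b}$ is a dense invariant core on which $\sum_j K_j=A$, and maximality of self-adjoint operators among their symmetric extensions then forces $\overline{\sum_j K_j}=A$.
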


\begin{proof} See e.g., \cite[Lemma 2-33]{AraiFock}.
\end{proof}

\begin{Lem}\label{spec-sum} Let $ T $ be a self-adjoint operator in a separable Hilbert space $ \H $. Then, the following (i) and (ii) hold.
\begin{enumerate}[(i)]
\item Let $ E_T^n := E_{\widetilde{T}_1} \times \cdots \times E_{\widetilde{T}_n} $ be a product measure. Then,
\begin{align}
E_{T^{(n)}} (J) = E_T ^n \big( \big\{ (\la _1 , ... , \la _n ) \in \R ^n \; \big| \; \sum _{j=1} ^n \la _j \in J \big\} \big) , \q J \in \B ^1 . 
\end{align}

\item For all $ B_1 , B_2 \in \B ^1 $ and $ n \ge 0 $, it follows that
\begin{align}
R(E_T (B_1)) \otimes R(E_{T^{(n)}} (B_2)) \subset R(E_{T^{(n+1)}}( B_1 + B_2 ) ) \q \text{on} \q \H \otimes \big( \ot ^n \H \big) ,
\end{align}
where $ B_1 + B_2 := \{ \la _1 + \la _2 \in \R \; | \; \la _j \in B_j , \; j=1,2 \} $.
\end{enumerate}
\end{Lem}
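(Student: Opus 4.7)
The plan for (i) is to apply Lemma \ref{2-33} directly to the operators $K_j := \widetilde{T}_j$ on $\otimes^n \H$. First I would verify that each $\widetilde{T}_j$ is self-adjoint and that the family $\{\widetilde{T}_j\}_{j=1}^n$ strongly commutes; both follow from standard tensor product facts, since distinct $\widetilde{T}_j$'s act nontrivially on disjoint tensor factors, so their bounded spectral projections (products of $E_T(\cdot)$ in one slot and $I$'s in the others) commute. I would then identify the spectral measure of $\widetilde{T}_j$ as $I \otimes \cdots \otimes E_T(\cdot) \otimes \cdots \otimes I$ (with $E_T(\cdot)$ in the $j$-th slot), so that $E_{\widetilde{T}_1} \times \cdots \times E_{\widetilde{T}_n}$ coincides with the $E_T^n$ in the statement. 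Since by definition $T^{(n)} = \overline{\sum_j \widetilde{T}_j \upharpoonright \hot^n D(T)}$, Lemma \ref{2-33} yields the asserted formula for $E_{T^{(n)}}(J)$.

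For (ii), my plan is to use the canonical identification $\otimes^{n+1}\H \cong \H \otimes (\otimes^n\H)$ and apply Lemma \ref{2-33} a second time, now to the pair of strongly commuting self-adjoint operators $T \otimes I$ and $I \otimes T^{(n)}$, whose closed sum I will argue equals $T^{(n+1)}$. This yields
\begin{equation*}
E_{T^{(n+1)}}(J) = (E_T \otimes E_{T^{(n)}})\bigl(\{(\mu_1,\mu_2) \in \R^2 \,|\, \mu_1 + \mu_2 \in J\}\bigr),
\end{equation*}
where $E_T \otimes E_{T^{(n)}}$ denotes the joint spectral measure of the commuting pair. Taking $J = B_1 + B_2$, the rectangle $B_1 \times B_2$ is contained in $\{(\mu_1,\mu_2) : \mu_1 + \mu_2 \in B_1 + B_2\}$, so any $\Psi \otimes \Phi$ with $\Psi \in R(E_T(B_1))$ and $\Phi \in R(E_{T^{(n)}}(B_2))$ satisfies $(E_T(B_1) \otimes E_{T^{(n)}}(B_2))(\Psi \otimes \Phi) = \Psi \otimes \Phi$ and therefore lies in the range of $E_{T^{(n+1)}}(B_1 + B_2)$, giving the desired inclusion.

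The main delicate point I expect is the identification $T^{(n+1)} = \overline{(T \otimes I) + (I \otimes T^{(n)})}$ under the factorization $\otimes^{n+1}\H \cong \H \otimes (\otimes^n\H)$: algebraically, $\sum_{j=1}^{n+1}\widetilde{T}_j$ on $\hot^{n+1}D(T)$ decomposes as $\widetilde{T}_1 + \sum_{j=2}^{n+1}\widetilde{T}_j$, which corresponds to $(T \otimes I) + (I \otimes T^{(n)})$ on $D(T) \hot (\hot^n D(T))$, but I must ensure both sides have the same closure. I would handle this by noting that the strongly commuting pair $(T \otimes I, I \otimes T^{(n)})$ has self-adjoint closed sum on a natural core containing $\hot^{n+1} D(T)$, which therefore agrees with $T^{(n+1)}$. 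Once this identification is pinned down, the remainder of the argument is a direct reading of Lemma \ref{2-33}.
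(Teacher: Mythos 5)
Your proposal is correct and follows essentially the same route as the paper: part (i) is a direct application of Lemma \ref{2-33} to the strongly commuting family $\{\widetilde{T}_j\}$, and part (ii) uses the factorization $T^{(n+1)} = \overline{T\otimes I + I\otimes T^{(n)}}$ on $\H\otimes(\otimes^n\H)$ together with Lemma \ref{2-33} applied to the commuting pair $T\otimes I$, $I\otimes T^{(n)}$. The paper simply asserts the identification of $T^{(n+1)}$ with this closed sum; you flag it as the delicate step and sketch why the closures agree, which is a reasonable elaboration but not a different argument.
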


\begin{proof} 
\begin{enumerate}[(i)]
\item This follows directly from Lemma \ref{2-33}.

\item Let us note that we can write as 
\begin{align}
T^{(n+1)} = \overline{ T \otimes 1 + 1 \otimes T^{(n)} }
\end{align}
on $ \H \otimes \big( \ot ^n \H \big) $. The self-adjoint operators $ T\otimes I $ and $ I \otimes T^{(n)} $ are strongly commuting. Hence, using Lemma \ref{2-33}, the desired result follows.
\end{enumerate}
\end{proof}

\subsection{Boson creation and annihilation operators}

The boson annihilation operator $ A(f) $ with $ f \in \H $ is defined to be a densely defined closed operator on $ \F _\bb (\H ) $ whose adjoint is given by
\begin{align}
& (A(f)^* \Psi ) ^{(0)} := 0 ,\\
& (A(f) ^* \Psi ) ^{(n)} := \sqrt{n} S_n (f \otimes \Psi ^{(n-1)}) , \q \Psi = \{ \Psi ^{(n)} \} _{n=0} ^\infty  \in D(A(f)^*), \q n\ge 1 .
\end{align}
We note that $ A(f) $ is anti-linear in $ f $ and $ A(g)^* $ linear in $ g $. The boson creation and annihilation operators leave the finite particle subspace
\begin{align}
\F _{\bb , 0} (\H ) := \Big\{ \{ \Psi ^{(n)} \} _{n=0} ^\infty \in \F _\bb (\H ) \, \Big| \,  \Psi ^{(n)} =0 \, \text{for all sufficiently large $n$} \Big\} 
\end{align}
invariant and satisfy the canonical commutation relations:
\begin{align}
[A(f) , A (g ) ^* ] = \left\langle f,g \right\rangle _{\H } , \q [A(f) , A(g ) ] = [A (f) ^* , A(g )^* ]=0, \q f,g \in \H ,
\end{align}
on $ \F _{\bb, 0} (\H ) $.

The following fact is well known.

\begin{Lem}\label{boson-bdd} Let $ K $ be an injective, non-negative, self-adjoint operator on $ \H $. Then, for all $ \Psi \in D(\dd \Gamma _\bb (K) ^{1/2}) $ and $ f\in D(K^{-1/2}) $, 
\begin{align}
\| A(f) \Psi \| _{\F _\bb (\H )} & \le \| K^{-1/2} f \| _\H \| \dd\Gamma _\bb (K) ^{1/2}\Psi \| _{\F _\bb (\H )} , \\
\| A(f)^* \Psi \| _{\F _\bb (\H )} & \le \| K^{-1/2} f \| _\H \| \dd\Gamma _\bb (K) ^{1/2}\Psi \| _{\F _\bb (\H )} + \| f \| _\H \| \Psi \| _{\F _\bb (\H )} .
\end{align}
\end{Lem}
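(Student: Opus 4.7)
The plan is to establish both inequalities first on the finite particle subspace $\F_{\bb,0}(\H)\cap D(\dd\Ga_\bb(K)^{1/2})$ sector by sector, and then extend to the full domain $D(\dd\Ga_\bb(K)^{1/2})$ by a standard closure argument. The key scalar ingredient is the operator inequality on $\H$,
\begin{align*}
|f\rangle\langle f| \le \|K^{-1/2}f\|_\H^2\,K,
\end{align*}
valid for every $f\in D(K^{-1/2})$. This follows from the Cauchy--Schwarz estimate $|\langle f,g\rangle|^2 = |\langle K^{-1/2}f, K^{1/2}g\rangle|^2 \le \|K^{-1/2}f\|^2\langle g, Kg\rangle$ for $g\in D(K^{1/2})$, using the injectivity of $K$ (so that $R(K^{1/2})$ is dense in $\H$). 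Ampliating by tensoring with identities, one obtains $\widetilde{P_f}_1 \le \|K^{-1/2}f\|^2 \widetilde{K}_1$ as quadratic forms on $D(\widetilde{K}_1^{1/2})\subset\ot^n\H$, where $P_f:=|f\rangle\langle f|$.

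Next, I would compute $A(f)$ on the $n$-boson sector. From the definition $(A(f)^*\Phi)^{(n)}=\sqrt{n}\,S_n(f\otimes\Phi^{(n-1)})$ and the invariance $S_n\Psi^{(n)}=\Psi^{(n)}$ for $\Psi^{(n)}\in\ot_\rm{s}^n\H$, a direct duality calculation yields
\begin{align*}
\|A(f)\Psi^{(n)}\|^2 = n\,\langle \Psi^{(n)}, \widetilde{P_f}_1\,\Psi^{(n)}\rangle.
\end{align*}
The permutation symmetry of $\Psi^{(n)}$, combined with the identity $U_\sigma^{-1}\widetilde{K}_j U_\sigma = \widetilde{K}_{\sigma^{-1}(j)}$, shows that $\langle \Psi^{(n)},\widetilde{K}_j\Psi^{(n)}\rangle$ is independent of $j$, whence $\langle\Psi^{(n)},\widetilde{K}_1\Psi^{(n)}\rangle = \tfrac{1}{n}\langle\Psi^{(n)},K_\bb^{(n)}\Psi^{(n)}\rangle$. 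Combining these with the ampliated operator inequality gives
\begin{align*}
\|A(f)\Psi^{(n)}\|^2 \le \|K^{-1/2}f\|_\H^2\,\langle\Psi^{(n)}, K_\bb^{(n)}\Psi^{(n)}\rangle.
\end{align*}
Summing over $n$ produces the first bound for $\Psi\in\F_{\bb,0}(\H)\cap D(\dd\Ga_\bb(K)^{1/2})$.

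For the second bound, I would invoke the canonical commutation relation $[A(f),A(f)^*]=\|f\|_\H^2$ on $\F_{\bb,0}(\H)$, which at once yields $\|A(f)^*\Psi\|^2=\|A(f)\Psi\|^2+\|f\|^2\|\Psi\|^2$; subadditivity of the square root and the first bound then deliver the claimed estimate. Finally, to extend both inequalities to arbitrary $\Psi\in D(\dd\Ga_\bb(K)^{1/2})$, I would use that $\F_{\bb,0}(\H)\cap D(\dd\Ga_\bb(K)^{1/2})$ is a core for $\dd\Ga_\bb(K)^{1/2}$, approximate $\Psi$ in its graph norm, and pass to the limit using the closedness of $A(f)$ and $A(f)^*$. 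I do not anticipate a serious obstacle; the most delicate point is simply the justification of the ampliated operator inequality $\widetilde{P_f}_1\le\|K^{-1/2}f\|^2\widetilde{K}_1$, which is routine once the scalar inequality on $\H$ is in hand.
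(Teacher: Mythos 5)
The paper does not actually supply a proof of this lemma; it is prefaced with ``The following fact is well known'' and is used without citation, so there is no internal argument to compare your attempt against. Your proof is correct, and it is the standard abstract proof of the number-operator bound for annihilation/creation operators. Each step checks out: the form inequality $|f\rangle\langle f|\le \|K^{-1/2}f\|^2 K$ on $D(K^{1/2})$ is valid exactly as you argue (writing $\langle f,g\rangle=\langle K^{-1/2}f,K^{1/2}g\rangle$ for $f\in D(K^{-1/2})$, $g\in D(K^{1/2})$, and using Cauchy--Schwarz); ampliation to $\widetilde{P_f}_1\le\|K^{-1/2}f\|^2\widetilde{K}_1$ is routine; the sector computation $\|(A(f)\Psi)^{(n-1)}\|^2=n\langle\Psi^{(n)},\widetilde{P_f}_1\Psi^{(n)}\rangle$ follows directly from the defining relation $(A(f)^*\Phi)^{(n)}=\sqrt{n}\,S_n(f\otimes\Phi^{(n-1)})$; the averaging $\langle\Psi^{(n)},\widetilde{K}_1\Psi^{(n)}\rangle=\tfrac1n\langle\Psi^{(n)},K_\bb^{(n)}\Psi^{(n)}\rangle$ uses the permutation invariance of $\Psi^{(n)}$; and summing over sectors, the CCR step for $A(f)^*$, and the closure/core argument are all standard and sound. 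One cosmetic slip: with the paper's convention $U_\sigma(\psi_1\otimes\cdots\otimes\psi_n)=\psi_{\sigma(1)}\otimes\cdots\otimes\psi_{\sigma(n)}$ the conjugation identity is $U_\sigma^{-1}\widetilde{K}_j U_\sigma=\widetilde{K}_{\sigma(j)}$, not $\widetilde{K}_{\sigma^{-1}(j)}$; this does not affect the conclusion, since either way $\langle\Psi^{(n)},\widetilde{K}_j\Psi^{(n)}\rangle$ is $j$-independent.
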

 
\begin{Lem}\label{boson-comm} Let $ T $ be an injective, non-negative, self-adjoint operator on $ \H $. Then, for all $ f \in D(T^{-1/2}) \cap D(T) $, $ A(f) $ and $ A(f) ^* $ map $ D(\dd \Ga _\bb (T) ^{3/2}) $ into $ D(\dd \Ga _\bb (T)) $, and satisfy the following commutation relations:
\begin{align}
[\dd \Ga _\bb (T) , A(f) ^* ] \Psi & = A (T f) ^* \Psi , \\
[\dd \Ga _\bb (T) , A(f)] \Psi & = -A (T f) \Psi ,
\end{align}
for all $ \Psi \in D(\dd \Ga _\bb (T) ^{3/2}) $.
\end{Lem}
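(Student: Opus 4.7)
The plan is to establish both commutation relations first on a dense core of particularly regular vectors by direct calculation on each Fock sector, and then to extend to $\Psi\in D(\dd\Ga_\bb(T)^{3/2})$ via a spectral-cutoff approximation combined with the closedness of $\dd\Ga_\bb(T)$ and the relative bounds in Lemma \ref{boson-bdd}.

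For the core step, I would fix $\Psi=\{\Psi^{(n)}\}\in\F_{\bb,0}(\H)$ with each component lying in the algebraic tensor power $\hot^n D(T)$. Since $T^{(n)}=\overline{\sum_{j=1}^n\widetilde{T}_j}$ is invariant under permutations, it commutes with the symmetrizer $S_n$, so a direct calculation on the simple tensor $f\otimes\Psi^{(n-1)}$ gives
\begin{align}
T^{(n)}_\bb\bigl(\sqrt{n}\,S_n(f\otimes\Psi^{(n-1)})\bigr)=\sqrt{n}\,S_n(Tf\otimes\Psi^{(n-1)})+\sqrt{n}\,S_n(f\otimes T^{(n-1)}_\bb\Psi^{(n-1)}).
\end{align}
Reading this sector by sector yields $\dd\Ga_\bb(T)A(f)^*\Psi=A(Tf)^*\Psi+A(f)^*\dd\Ga_\bb(T)\Psi$ on such $\Psi$, which is the desired relation for $A(f)^*$. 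The dual pointwise calculation (contracting $f$ into one slot) reproduces the analogous identity for $A(f)$ with the opposite sign.

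To extend to general $\Psi\in D(\dd\Ga_\bb(T)^{3/2})$, I would approximate by $\Psi_{N,M}:=P_M E_{\dd\Ga_\bb(T)}([0,N])\Psi$, where $P_M$ is the projection onto the first $M$ Fock sectors, reducing to the core case (with a final approximation of components inside $\hot^n D(T)$, which is dense in $\ot_{\rm s}^n\H$). Noting that $f\in D(T)\cap D(T^{-1/2})$ implies $T^{-1/2}(Tf)=T^{1/2}f$, Lemma \ref{boson-bdd} applied to both terms on the right-hand side gives
\begin{align}
\|A(Tf)^*\Phi\|+\|A(f)^*\dd\Ga_\bb(T)\Phi\|\le \|T^{1/2}f\|\,\|\dd\Ga_\bb(T)^{1/2}\Phi\|+\|Tf\|\,\|\Phi\|+\|T^{-1/2}f\|\,\|\dd\Ga_\bb(T)^{3/2}\Phi\|+\|f\|\,\|\dd\Ga_\bb(T)\Phi\|
\end{align}
for any sufficiently regular $\Phi$. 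Inserting $\Phi=\Psi_{N,M}$, the right-hand side is uniformly bounded in $N,M$ and converges as $N,M\to\infty$ by the spectral theorem. Since the first inequality of Lemma \ref{boson-bdd} applied to $\Psi-\Psi_{N,M}$ also gives $A(f)^*\Psi_{N,M}\to A(f)^*\Psi$, the closedness of $\dd\Ga_\bb(T)$ forces $A(f)^*\Psi\in D(\dd\Ga_\bb(T))$ and produces the limiting identity. The annihilation case is completely parallel.

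The main obstacle, and the reason for the exponent $3/2$, is exactly the control of $A(f)^*\dd\Ga_\bb(T)\Psi$: Lemma \ref{boson-bdd} bounds it in terms of $\|\dd\Ga_\bb(T)^{1/2}\dd\Ga_\bb(T)\Psi\|=\|\dd\Ga_\bb(T)^{3/2}\Psi\|$, so less regularity would not close the approximation argument. The joint hypothesis $f\in D(T)\cap D(T^{-1/2})$ is likewise tight: it is used precisely so that $A(f)^{(*)}$ and $A(Tf)^{(*)}$ are simultaneously $\dd\Ga_\bb(T)^{1/2}$-bounded with finite constants.
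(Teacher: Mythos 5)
The paper gives no proof of this lemma; it simply refers to \cite[Theorem~4-27]{AraiFock}, so there is no internal argument to compare against. Your outline is the standard direct proof and the skeleton is sound: the sector-by-sector calculation for $A(f)^*$ (using that $T^{(n)}$ is permutation-invariant and so commutes with $S_n$) is correct, the dual calculation for $A(f)$ (moving $T$ across the contraction by self-adjointness) produces the sign flip, and the closure/extension mechanism built from Lemma~\ref{boson-bdd} applied to both $\Phi$ and $\dd\Ga_\bb(T)\Phi$ is exactly why $D(\dd\Ga_\bb(T)^{3/2})$ and $f\in D(T)\cap D(T^{-1/2})$ are the right hypotheses.

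The step that does not close as written is the approximation. After $\Psi_{N,M}:=P_M E_{\dd\Ga_\bb(T)}([0,N])\Psi$, the surviving sectors lie in $R(E_{T^{(n)}_\bb}([0,N]))$, which is not contained in $\hot^n D(T)$, the space on which the core calculation was verified, so your parenthetical ``final approximation of components inside $\hot^n D(T)$'' is carrying all the weight. To pass the commutation identity from $\hot^n D(T)$ to $\Psi_{N,M}^{(n)}$ you need a sequence $\Phi_k\in\hot^n D(T)$ for which not only $\Phi_k\to\Psi_{N,M}^{(n)}$ but also $(T^{(n)}_\bb)^{1/2}\Phi_k$, $T^{(n)}_\bb\Phi_k$ and $(T^{(n)}_\bb)^{3/2}\Phi_k$ converge, and $\hot^n D(T)$ is a core only for $T^{(n)}_\bb$ itself, not obviously for its $3/2$ power; reprojecting $\Phi_k$ with a spectral cutoff throws you back out of $\hot^n D(T)$, so the spectral and algebraic cutoffs work against each other. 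The standard repair is to do the core computation directly on $\hot^n R(E_T([0,R]))$ for arbitrary $R$: by Lemma~\ref{spec-sum} this space lies inside $R(E_{T^{(n)}}([0,nR]))$, where every power of $T^{(n)}_\bb$ is bounded, so a single Fock-norm approximation controls all the required norms simultaneously and the closedness argument then closes in one pass. The strategy is right; this last step needs to be made precise before the extension to $D(\dd\Ga_\bb(T)^{3/2})$ is actually justified.
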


\begin{proof} For a proof, see \cite[Theorem 4-27]{AraiFock}.
\end{proof}

\begin{Lem}\label{boson-spec} Let $ T $ be a non-negative self-adjoint operator in $ \H $. Then, the following (i) and (ii) hold.
\begin{enumerate}[(i)]
\item For all $ B_1 , B_2 \in \B ^1 $ and $ f\in R(E_T (B_1)) $, $ A(f) ^* $ maps $ R(E_{\dd \Ga _\bb (T)} (B_2)) \cap D(A(f)^*) $ into $ R(E_{\dd \Ga _\bb (T)} (B_1 + B_2)) $, where $ B_1 + B_2 := \{ \la _1 + \la _2 \in \R \; | \; \la _j \in B_j , \; j=1,2 \} $.

\item For all $ \La \ge 0 $ and $ f\in D(T^{-1/2}) $, $ A(f) $ leave $ R(E_{\dd \Ga _\bb (T)} ([0, \La ])) $ invariant.
\end{enumerate}
\end{Lem}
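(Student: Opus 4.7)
The plan is to reduce both statements to Lemma \ref{spec-sum}, using the duality between $A(f)$ and $A(f)^*$ and the crucial fact that the symmetrization operator $S_n$ commutes with $T^{(n)}$ (hence with its spectral projections) since $T^{(n)} = \overline{\sum_j \widetilde{T}_j}$ is built symmetrically in its slots.

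For part (i), I would start from the explicit action $(A(f)^*\Psi)^{(n+1)} = \sqrt{n+1}\, S_{n+1}(f\otimes \Psi^{(n)})$. Since $\Psi \in R(E_{\dd\Ga_\bb(T)}(B_2))$ decomposes fiberwise with $\Psi^{(n)} \in R(E_{T^{(n)}_\bb}(B_2))$, Lemma \ref{spec-sum}(ii) gives $f\otimes\Psi^{(n)} \in R(E_{T^{(n+1)}}(B_1+B_2))$ on $\H\otimes(\otimes^n\H)$. Since $S_{n+1}$ commutes with each $\widetilde{T}_j$, it commutes with $T^{(n+1)}$ and hence with $E_{T^{(n+1)}}(B_1+B_2)$. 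Thus
\begin{align*}
S_{n+1}(f\otimes \Psi^{(n)}) \in R(E_{T^{(n+1)}}(B_1+B_2))\cap \otimes_\rm{s}^{n+1}\H = R(E_{T^{(n+1)}_\bb}(B_1+B_2)),
\end{align*}
which summed over $n$ gives $A(f)^*\Psi \in R(E_{\dd\Ga_\bb(T)}(B_1+B_2))$.

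For part (ii), I would first verify domain inclusion: if $\Psi \in R(E_{\dd\Ga_\bb(T)}([0,\La]))$ then $\Psi \in D(\dd\Ga_\bb(T)^{1/2})$ with $\|\dd\Ga_\bb(T)^{1/2}\Psi\| \le \La^{1/2}\|\Psi\|$, so by Lemma \ref{boson-bdd} (using $f\in D(T^{-1/2})$) the vector $A(f)\Psi$ is defined. For the invariance, I would use duality: take any $\Phi \in R(E_{\dd\Ga_\bb(T)}((\La,\infty))) \cap D(A(f)^*)$. Since $T\ge 0$ the whole space equals $R(E_T([0,\infty)))$, so $f \in R(E_T([0,\infty)))$, and part (i) (applied with $B_1 = [0,\infty)$ and $B_2 = (\La,\infty)$) yields
\begin{align*}
A(f)^*\Phi \in R(E_{\dd\Ga_\bb(T)}([0,\infty)+(\La,\infty))) = R(E_{\dd\Ga_\bb(T)}((\La,\infty))).
\end{align*}
Hence $\langle \Phi, A(f)\Psi\rangle = \langle A(f)^*\Phi, \Psi\rangle = 0$ because $\Psi$ is in the orthogonal spectral subspace. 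Since such $\Phi$'s (drawn from a core of $A(f)^*$ inside the high-energy subspace) are dense in $R(E_{\dd\Ga_\bb(T)}((\La,\infty)))$, we conclude $A(f)\Psi \in R(E_{\dd\Ga_\bb(T)}([0,\La]))$.

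The only genuine subtlety I anticipate is the density statement at the end of (ii): I need $R(E_{\dd\Ga_\bb(T)}((\La,\infty)))\cap D(A(f)^*)$ to be dense in $R(E_{\dd\Ga_\bb(T)}((\La,\infty)))$. This follows because the finite-particle vectors lying in $R(E_{\dd\Ga_\bb(T)}((\La,L]))$ for $L>\La$ exhaust the subspace as $L\to\infty$ and are automatically in $D(A(f)^*) \supset D(\dd\Ga_\bb(T)^{1/2})$ by Lemma \ref{boson-bdd}. Everything else is bookkeeping with spectral projections and the commutativity $[S_n, T^{(n)}]=0$.
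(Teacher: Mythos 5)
Your proposal is correct and follows essentially the same route as the paper's proof: part (i) via the explicit action of $A(f)^*$ on fibers together with Lemma \ref{spec-sum}, and part (ii) by duality with (i), approximating $\Phi$ by spectral truncations to bounded intervals so that they lie in $D(A(f)^*)$. You spell out more explicitly the commutativity of $S_{n+1}$ with $T^{(n+1)}$ and the density point at the end of (ii), both of which the paper leaves implicit, but the argument is the same.
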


\begin{proof}
\begin{enumerate}[(i)]
\item Let $ \Psi = \{ \Psi ^{(n)} \} _{n=0} ^\infty \in R(E_{\dd \Ga _\bb (T)} (B_2)) \cap D(A(f)^*) $. By the general theory of direct product operators, it follows that $ E_{\dd \Ga _\bb (T)} (B_2) = \op _{n=0} ^\infty E_{T _\bb ^{(n)}} (B_2) $. Hence, $ \Psi ^{(n)} \in R(E_{T ^{(n) } _\bb} (B_2)) $. By the definition of the creation operator $ A(f) ^* $, 
\begin{align}
(A(f) ^* \Psi ) ^{(n+1)} = \sqrt{n+1} S_{n+1} (f \otimes \Psi ^{(n)}) , \q n \ge 0 .
\end{align}
From Lemma \ref{spec-sum}, we see that the right-hand side belongs to $ R (E_{T _\bb ^{(n)}} (B_1+B_2)) $. Therefore we have $ A(f) ^* \Psi \in R(E_{\dd \Ga _\bb (T)} (B_1+B_2)) $.

\item Let $ f \in D(T^{-1/2}) $ and $ \Psi \in R\big( E_{\dd \Ga _\bb (T)} ([0,\La ])\big) $ for some $ \La \ge 0 $. Since $ D(A(f)) \supset D(\dd \Ga _\bb (T) ^{1/2}) $ from Lemma \ref{boson-bdd}, we see that $ \Psi \in D(A(f)) $. To prove the claim, it is sufficient to see that for all $ \Phi \in R \big( E_{\dd \Ga _\bb (T)} ([\La , \infty )) \big) $, $ \Expect{ \Phi , A(f) \Psi } =0 $.

Now let $ \Phi \in R \big( E_{\dd \Ga _\bb (T)} ([\La , \infty )) \big) $ be fixed arbitrarily and set $ \Phi _n := E_{\dd \Ga _\bb (T)} ([\La , \La +n ]) \Phi \; (n \in \N ) $. Then, $ \Phi _n \to \Phi \; (n \to \infty ) $. Moreover, it follow from (i) that $ A(f) ^* \Phi _n \in R \big( E_{\dd \Ga _\bb (T)} ([\La , \infty )) \big)  $ and thus $ \Expect{A(f) ^* \Phi _n , \Psi } =0 $ for all $ n\in \N $. Hence,
\begin{align}
\Expect{ \Phi , A(f) \Psi } = \lim _{n \to \infty } \Expect{ A(f) ^* \Phi _n,  \Psi } = 0 .
\end{align}
Therefore the assertion follows.
\end{enumerate}
\end{proof}

\subsection{fermion creation and annihilation operators}

The fermion annihilation operator $ B(f) $ with $ f \in \H $ is defined to be a bounded operator on $ \F _\ff (\H ) $ whose adjoint is given by
\begin{align}
& (B(f)^* \Psi ) ^{(0)} := 0 , \\
& (B(f) ^* \Psi ) ^{(n)} := \sqrt{n} A_n (f \otimes \Psi ^{(n-1)}) , \q \Psi = \{ \Psi ^{(n)} \} _{n=0} ^\infty \in \F _\ff (\H ) ,
\end{align}
where $ A_n $ denotes the anti-symmetrization operator on $ \otimes ^n \H $, i.e. $ A_n (\otimes ^n \H ) = \wedge ^n \H $. It is well known that, the operator norm of $ B(f) ^\sharp $ becomes
\begin{align}\label{fermion-bdd}
\| B(f) ^\sharp \| = \| f \| _\H .
\end{align}
$ B(f) $ is anti-linear in $ f $ and $ B(f)^* $ linear in $ f $. The fermion creation and annihilation operators satisfy the canonical anti-commutation relations:
\begin{align}
\{ B(f) , B (g ) ^* \} = \left\langle f,g \right\rangle _{\H } , \q \{B(f) , B(g ) \} = \{B (f) ^* , B(g )^* \} =0, \q f,g \in \H ,
\end{align}
on $ \F _{\ff} (\H ) $, where $ \{ X,Y \}:= XY+YX $.

We define an operator-valued function $ \psi (\cdot \, , \cdot ) $ by
\begin{align}
\psi (f,g) := B(f) + B(g) ^* , \q f,g \in \H .
\end{align}
Let $ \E _\ff $ be the set consisting of finite linear combinations of finite products of operators $ \psi (f,g) \; (f,g\in \H ) $. For a product operator $ \psi (f_1,g_1) \cdots \psi (f_n,g_n) \; (f_j , g_j \in \H , \; j=1,...,n , \; n\ge 1) $, we define the \textit{normal ordering} $ \Normal{ \psi (f_1,g_1) \cdots \psi (f_n,g_n) } $ by
\begin{align}
\Normal{ \psi (f_1,g_1) \cdots \psi (f_n,g_n) } \, = \sideset{}{'} \sum _k \sgn (\sigma ) B(g_{i_1}) ^* \cdots B(g_{i_k}) ^* B(f_{j_1}) \cdots B(f_{j_{n-k}}) ,
\end{align}
where the symbol $ \sum _k ' $ denotes the sum over $ i_1 ,...,i_k , j_1,...,j_{n-k} $ satisfying $ i_1 <\cdots < i_k $, $ j_1 <\cdots < j_{n-k} $, $ \{ i_1 , ... , i_k \} \cap \{ j_1 , ... , j_{n-k} \} = \emptyset $, $ \{ i_1 , ... , i_k \} \cup \{ j_1 , ... , j_{n-k} \} = \{ 1,...,n \} $, and $ \sigma $ is the permutation $ (1,...,n) \mapsto (i_1,...,i_k , j_1,..., j_{n-k}) $. We extend it by linearity to $ \E _\ff $.

\begin{Lem}\label{fermion-comm} Let $ T $ be a self-adjoint operator in $ H $. Then, for all $ f \in D(T) $, $ B(f) $ and $ B(f) ^* $ leave $ D(\dd \Ga _\ff (T) ) $ invariant, and satisfy the following commutation relations:
\begin{align}
[\dd \Ga _\ff (T) , B(f) ^* ] \Psi & = B (T f) ^* \Psi , \label{f-comm1} \\
[\dd \Ga _\ff (T) , B(f)] \Psi & = - B (T f) \Psi , \label{f-comm2}
\end{align}
for all $ \Psi \in D(\dd \Ga _\ff (T) ) $.
\end{Lem}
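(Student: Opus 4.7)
The plan is the classical two-step strategy: first verify the commutation relations algebraically on the finite-particle subspace consisting of vectors with each component in $\hat{\otimes}^n D(T)$, and then pass to the full domain $D(d\Gamma_\mathrm{f}(T))$ by a closedness/approximation argument that exploits the boundedness of $B(f)$ and $B(f)^*$.

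First I would work on the core $\mathcal{D}_0 := \mathcal{F}_{\mathrm{f},0}(\mathcal{H}) \cap \bigoplus_{n=0}^\infty \hat{\otimes}^n D(T)$. For $\Psi = \{\Psi^{(n)}\}_n \in \mathcal{D}_0$ with only finitely many nonzero components, by definition
\begin{align}
(B(f)^*\Psi)^{(n+1)} = \sqrt{n+1}\,A_{n+1}(f\otimes \Psi^{(n)}),
\end{align}
and applying $T^{(n+1)}_\mathrm{f} = A_{n+1}(\sum_{j=1}^{n+1}\widetilde{T}_j)$ to this vector, the term where $T$ acts on the distinguished first slot contributes $B(Tf)^*\Psi$, while the remaining $n$ terms, after using the antisymmetrizer's compatibility with $\sum_j \widetilde{T}_j$ on $\hat{\otimes}^n D(T)$, reassemble into $B(f)^* T^{(n)}_\mathrm{f}\Psi^{(n)}$. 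This yields the algebraic identity
\begin{align}
d\Gamma_\mathrm{f}(T)B(f)^*\Psi = B(f)^*d\Gamma_\mathrm{f}(T)\Psi + B(Tf)^*\Psi
\end{align}
on $\mathcal{D}_0$, and \eqref{f-comm2} follows either by the same computation or by taking adjoints, since $B(f)$ is bounded.

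Next I would extend to $D(d\Gamma_\mathrm{f}(T))$. Given $\Psi\in D(d\Gamma_\mathrm{f}(T))$, truncate by the projection $P_N$ onto vectors with at most $N$ particles and mollify each component using the spectral projection $E_T([0,N])$ applied slotwise (this lands each truncated component in $\hat{\otimes}^n D(T)$). The resulting $\Psi_N \in \mathcal{D}_0$ satisfies $\Psi_N \to \Psi$ and $d\Gamma_\mathrm{f}(T)\Psi_N \to d\Gamma_\mathrm{f}(T)\Psi$ by standard direct-sum spectral arguments. Since $B(f)^*$ and $B(Tf)^*$ are bounded with norms $\|f\|$ and $\|Tf\|$ respectively (cf.\ \eqref{fermion-bdd}), we have
\begin{align}
B(f)^*\Psi_N &\to B(f)^*\Psi, \\
d\Gamma_\mathrm{f}(T)B(f)^*\Psi_N &= B(f)^*d\Gamma_\mathrm{f}(T)\Psi_N + B(Tf)^*\Psi_N \to B(f)^*d\Gamma_\mathrm{f}(T)\Psi + B(Tf)^*\Psi.
\end{align}
Closedness of $d\Gamma_\mathrm{f}(T)$ then gives $B(f)^*\Psi \in D(d\Gamma_\mathrm{f}(T))$ together with \eqref{f-comm1}. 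The corresponding statement for $B(f)$ is handled identically, with an extra sign from the antisymmetrizer producing $-B(Tf)$ on the right-hand side.

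The only delicate point I anticipate is verifying that the mollification $\Psi_N$ converges to $\Psi$ in the graph norm of $d\Gamma_\mathrm{f}(T)$; this is routine but needs the compatibility of the spectral projections of $T$ on each factor with those of $T^{(n)}_\mathrm{f}$, which is guaranteed by Lemma \ref{spec-sum}~(i). Once the core is in hand, the rest is just boundedness of $B(f)^\sharp$ plus closedness, so no further subtleties arise.
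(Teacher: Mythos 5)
The paper itself does not supply a proof of Lemma~\ref{fermion-comm} but only cites \cite[Theorem 5-9]{AraiFock}, so there is no in-text argument to compare against; your sketch, however, is the standard proof and is essentially correct.

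Two small points deserve tightening. First, the lemma assumes $T$ only self-adjoint, not non-negative, so the spectral truncation should be $E_T([-N,N])$ rather than $E_T([0,N])$; with $[0,N]$ your mollifier would not converge strongly to the identity in general. Second, the mollified vector $\Gamma_\ff(E_T([-N,N]))P_N\Psi$ has $n$-particle components in the Hilbert-space tensor power $\wg^n R(E_T([-N,N]))$, which is not literally contained in the algebraic tensor product $\hot^n D(T)$ where $T^{(n)}$ was initially defined; one needs the extra (standard) observation that $T$ restricted to $R(E_T([-N,N]))$ is bounded, so $T^{(n)}$ restricted to $\wg^n R(E_T([-N,N]))$ is the bounded closure of its algebraic part and the commutation identity extends there by continuity before you invoke closedness of $\dd\Ga_\ff(T)$. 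Your algebraic step on the core, the use of $\|B(f)^\sharp\|=\|f\|$ to pass strong limits through $B(f)$ and $B(Tf)^*$, and the adjoint/duality argument deriving \eqref{f-comm2} from \eqref{f-comm1} are all sound. Since the paper delegates the proof to a textbook, your write-up is in fact more informative than the paper's own treatment, and after the two fixes above it would be a complete proof.
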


\begin{proof} See \cite[Theorem 5-9]{AraiFock}.
\end{proof}

\begin{Lem}\label{fermion-spec} Let $ T $ be a non-negative self-adjoint operator in $ \H $. Then, the following (i) and (ii) hold.
\begin{enumerate}[(i)]
\item For all $ a,R \ge 0 $ and $ f\in R(E_T ([0, a])) $, $ B^* (f) $ maps $ R(E_{\dd \Ga _\ff (T)} ([0, R])) $ into $ R(E_{\dd \Ga _\ff (T)} ([0, R+a])) $.

\item For all $ R \ge 0 $ and $ f\in \H $, $ B(f) $ leave $ R(E_{\dd \Ga _\ff (T)} ([0 , R])) $ invariant.
\end{enumerate}
\end{Lem}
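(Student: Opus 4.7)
The plan is to mirror the proof of Lemma~\ref{boson-spec}, since the fermion setting differs only in that $B(f)^\sharp$ is bounded (no domain considerations) and the antisymmetrizer $A_n$ takes the role of $S_n$.

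For part (i), I would unpack the definition of $B(f)^*$ componentwise. Given $\Psi = \{\Psi^{(n)}\}_{n\ge 0}$ with $\Psi \in R(E_{\dd\Ga_\ff(T)}([0,R]))$, the direct-sum structure $E_{\dd\Ga_\ff(T)}([0,R]) = \op_{n\ge 0} E_{T_\ff^{(n)}}([0,R])$ forces $\Psi^{(n)} \in R(E_{T_\ff^{(n)}}([0,R]))$. Using $(B(f)^*\Psi)^{(n+1)} = \sqrt{n+1}\,A_{n+1}(f \otimes \Psi^{(n)})$ together with Lemma~\ref{spec-sum}(ii) for $B_1 = [0,a]$, $B_2 = [0,R]$, I get $f \otimes \Psi^{(n)} \in R(E_{T^{(n+1)}}([0,R+a]))$. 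The key observation is that $T^{(n+1)}$ commutes with every $U_\sigma$ ($\sigma \in \mathfrak{S}_{n+1}$), hence with $A_{n+1}$; thus $R(E_{T^{(n+1)}}([0,R+a]))$ is invariant under $A_{n+1}$, and its image under $A_{n+1}$ is contained in $R(E_{T_\ff^{(n+1)}}([0,R+a]))$. Reassembling the components yields $B(f)^* \Psi \in R(E_{\dd\Ga_\ff(T)}([0,R+a]))$.

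For part (ii), I would run a duality argument. Since $B(f)$ is bounded, it suffices to verify that $\langle \Phi, B(f) \Psi \rangle = 0$ for every $\Phi \in R(E_{\dd\Ga_\ff(T)}((R,\infty)))$ and every $\Psi \in R(E_{\dd\Ga_\ff(T)}([0,R]))$; orthogonality of the open and closed spectral sets is then what makes the conclusion equivalent to the desired invariance. I then approximate in two independent ways: $\Phi = \lim_m \Phi_m$ with $\Phi_m := E_{\dd\Ga_\ff(T)}((R, R+m])\Phi$, and $f = \lim_n f_n$ with $f_n := E_T([0,n]) f \in R(E_T([0,n]))$. The uniform bound \eqref{fermion-bdd} gives $\|B(f_n)^* - B(f)^*\| = \|f - f_n\| \to 0$, so $B(f_n)^* \Phi_m \to B(f)^* \Phi$. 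Now the same argument as in (i), but applied to the Borel sets $(R, R+m]$ and $[0,n]$ (this extension is immediate, since Lemma~\ref{spec-sum}(ii) is stated for arbitrary Borel sets), shows that $B(f_n)^* \Phi_m \in R(E_{\dd\Ga_\ff(T)}((R, R+m+n]))$. Because the latter spectral subspace is orthogonal to $R(E_{\dd\Ga_\ff(T)}([0,R]))$, one has $\langle B(f_n)^* \Phi_m, \Psi \rangle = 0$, and passing to the limit $n,m \to \infty$ yields $\langle \Phi, B(f) \Psi\rangle = 0$.

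No step looks seriously difficult: the main technical point is simply to keep the intervals half-open in (ii) so as to maintain orthogonality of the two spectral subspaces, and to observe that the proof of (i) works verbatim for arbitrary Borel sets (not only the closed intervals in the statement), which is what permits the approximation step. The role played by boundedness of $B(f)^\sharp$ is to let the two approximations commute with the limit, so that no domain conditions on $f$ or on $\Psi$ need to be verified, in contrast to the bosonic analogue.
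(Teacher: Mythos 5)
Your proof is correct and follows essentially the same route the paper intends, since the paper's own proof of Lemma~\ref{fermion-spec} is just ``Similar to the proof of Lemma~\ref{boson-spec}.''\ Two small remarks. First, your part~(i) is actually more careful than the paper's bosonic template: you make explicit that $T^{(n+1)}$ commutes with each $U_\sigma$, hence with $A_{n+1}$, so that $R(E_{T^{(n+1)}}([0,R+a]))$ is $A_{n+1}$-invariant and lands in $R(E_{T_\ff^{(n+1)}}([0,R+a]))$; the paper's proof of Lemma~\ref{boson-spec}(i) silently elides this step. Second, in part~(ii) you correctly test against $R(E_{\dd\Ga_\ff(T)}((R,\infty)))$ (open interval), which is the actual orthogonal complement of $R(E_{\dd\Ga_\ff(T)}([0,R]))$; the paper's bosonic proof tests against $[\Lambda,\infty)$, which is a minor imprecision when $\Lambda$ lies in the point spectrum, and your version fixes it. The one inefficiency is the double approximation in part~(ii): once you permit Borel sets beyond closed intervals in part~(i) (as you note is immediate from Lemma~\ref{spec-sum}(ii)), you could simply take $B_1=[0,\infty)$ so that $f\in R(E_T(B_1))=\H$ automatically and $B_1+(R,R+m]=(R,\infty)$, dispensing with the $f_n$ sequence entirely; that is what the boson proof does. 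Your extra approximation is harmless, correctly controlled by $\|B(f-f_n)^*\|=\|f-f_n\|$, but not needed.
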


\begin{proof} Similar to the proof of Lemma \ref{boson-spec}.
\end{proof}

\subsection{Electromagnetic fields}

Next, we introduce the photon field quantized in the Coulomb gauge. We adopt as the one-photon Hilbert space
\begin{align}
\H _\mathrm{ph} := L^2 (\R ^3 _\kk ; \C ^2) .
\end{align}
The above $ \R ^3 _\kk := \{ \kk = (k^1,k^2,k^3) \, | \, k^j \in \R , \; j=1,2,3 \} $ physically represents the momentum space of photons. If there is no confusion, we omit the subscript $ \kk $ in $ \R ^3 _\kk $. We freely use the identification $ L^2 (\R ^3 _\kk ; \C ^2) = \op ^2 L^2 (\R ^3 _\kk ) $. The Hilbert space for the quantized electromagnetic field is given by $ \F _\bb (\H _\rm{ph} ) $ the boson Fock space over $ \H _\rm{ph} $.

The energy of a photon with momentum $ \kk \in \R ^3 $ is given by $ \omega (\kk ) := |\kk | $. Then the function $ \omega $ defines uniquely a multiplication operator on $ \H _\rm{ph} $ which is injective, non-negative and self-adjoint. We denote it by the same symbol $ \omega $ also. The free Hamiltonian of the quantum electromagnetic field is given by the second quantization of $ \omega $:
\begin{align}
H _\mathrm{ph} := \mathrm{d}\Gamma _\bb ( \omega ) : \F _\bb (\H _\rm{ph} ) \to \F _\bb (\H _\rm{ph} ) .
\end{align}

We denote by $ a (\cdot ) \; ( \cdot \in \H _\mathrm{ph}) $ the annihilation operator on $ \F _\rm{ph} $. For each $ f \in L^2 (\R _\kk ^3) $, we use the notation:
\begin{align}
& a^{(1)} (f) := a (f,0) , \q a^{(2)} (f) := a (0,f) .
\end{align}

For $ \chi _\rm{ph} \in L^2 (\R ^3 _\xx ) $ satisfying $ \chi _\rm{ph} ^* = \chi _\rm{ph} $ and $ \hat{\chi _\rm{ph}} / \sqrt{\omega} \in L^2 (\R ^3 _\kk ) $, we set
\begin{align}\label{EM-def}
& A_j (0, \xx ) := \sum _{r=1,2} \Big( a^{(r)} \Big( \frac{\hat{\chi _\rm{ph} ^\xx } e^{(r)} _j }{\sqrt{2\omega }} \Big) + a^{(r)} \Big( \frac{\hat{\chi _\rm{ph} ^\xx} e^{(r)}_j }{\sqrt{2\omega }} \Big) ^* \Big) , \\
& \chi _\rm{ph} ^\xx (\yy ) := \chi _\rm{ph} (\yy - \xx ) , \q \yy \in \R ^3 ,
\end{align}
where $ \hat{\chiph } $ denotes the Fourier transform of $ \chiph $, and $ \chiph ^* $ denotes the complex conjugate. The functions $ \mathbf{e} ^{(r )} (\kk ) = (e _j ^{(r )} (\kk )) _{j=1} ^3 \in \R ^3 , \, r =1,2, $ are the polarization vectors satisfying
\begin{align}
\ee ^{(r)} (\kk ) \cdot \ee ^{(r')} (\kk ) = \delta _{rr'} , \q \kk \cdot \ee ^{(r)} (\kk ) =0 , \q \rm{a.e.} \; \kk \in \R ^3, \q r,r' = 1,2.
\end{align}
$ A_j (0, \xx ) $ is called the point-like quantized electromagnetic field at time $ t=0 $ with momentum cutoff $ \hat{\chi _\mathrm{ph}} $. As is well-known, $ A_j (0, \xx ) \; ( j =1,2,3) $ are essentially self-adjoint. We denote the closure of $ A_j (0, \xx ) $ by the same symbol. 

We assume the following condition.

\begin{Hypo}\label{B-Hypo} $ \hat{\chiph} / \omega \in L^2 (\R ^3 _\kk ) $.
\end{Hypo}

\begin{Lem}\label{gauge-bdd} Under Hypothesis \ref{B-Hypo}, for all $ i=1,2,3 $, $ \xx \in \R ^3 $ and $ \Psi \in D(H_\rm{ph} ^{1/2}) $,
\begin{align}
\| A_i (0, \xx ) \Psi \| \le M_\rm{ph} \| (H_\rm{ph} + 1)^{1/2} \Psi \| ,
\end{align}
where $ M_\rm{ph} := 2\sqrt{2} \| \hat{\chiph } /\omega \| _{L^2 (\R ^3 _\kk )} + \sqrt{2} \| \hat{\chiph }/\sqrt{\omega} \big\| _{L^2 (\R ^3 _\kk )} $.
\end{Lem}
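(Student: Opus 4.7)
The plan is to bound each of the four creation/annihilation operators appearing in the definition \eqref{EM-def} of $A_i(0,\xx)$ via the standard one-particle estimate (Lemma \ref{boson-bdd}) applied with $K=\omega$, and then combine the four bounds.

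First, I would identify the one-particle vectors that feed into the creation/annihilation operators. For fixed $i$, $\xx$, and $r \in \{1,2\}$, set
\begin{align*}
f_r := \frac{\hat{\chiph^{\xx}}\, e_i^{(r)}}{\sqrt{2\omega}} \in L^2(\R^3_\kk).
\end{align*}
Since translation in $ \xx $ only contributes a phase factor $e^{-i\kk\cdot\xx}$ in momentum space, we have $|\hat{\chiph^{\xx}}(\kk)|=|\hat{\chiph}(\kk)|$. Combined with the normalization $|\ee^{(r)}(\kk)|\le 1$, this yields
\begin{align*}
\|f_r\|_{\H_\rm{ph}} \le \Big\|\frac{\hat{\chiph}}{\sqrt{2\omega}}\Big\|_{L^2}, \qquad
\|\omega^{-1/2}f_r\|_{\H_\rm{ph}} \le \frac{1}{\sqrt{2}}\Big\|\frac{\hat{\chiph}}{\omega}\Big\|_{L^2}.
\end{align*}
By Hypothesis \ref{B-Hypo}, both norms are finite, and in particular $f_r \in D(\omega^{-1/2})$.

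Next, Lemma \ref{boson-bdd} applied to $K=\omega$ and $\dd\Ga_\bb(\omega)=H_\rm{ph}$ gives, for each $r$,
\begin{align*}
\|a^{(r)}(f_r)\Psi\| &\le \|\omega^{-1/2}f_r\|\cdot\|H_\rm{ph}^{1/2}\Psi\|, \\
\|a^{(r)}(f_r)^*\Psi\| &\le \|\omega^{-1/2}f_r\|\cdot\|H_\rm{ph}^{1/2}\Psi\| + \|f_r\|\cdot\|\Psi\|.
\end{align*}
Summing these two estimates over $r=1,2$ and using the bounds above, the triangle inequality gives
\begin{align*}
\|A_i(0,\xx)\Psi\| \le 2\sqrt{2}\,\Big\|\frac{\hat{\chiph}}{\omega}\Big\|_{L^2}\|H_\rm{ph}^{1/2}\Psi\| + \sqrt{2}\,\Big\|\frac{\hat{\chiph}}{\sqrt{\omega}}\Big\|_{L^2}\|\Psi\|.
\end{align*}

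Finally, since $H_\rm{ph}\ge 0$, one has $\|H_\rm{ph}^{1/2}\Psi\|\le \|(H_\rm{ph}+1)^{1/2}\Psi\|$ and $\|\Psi\|\le\|(H_\rm{ph}+1)^{1/2}\Psi\|$, so collecting terms yields
\begin{align*}
\|A_i(0,\xx)\Psi\| \le M_\rm{ph}\,\|(H_\rm{ph}+1)^{1/2}\Psi\|,
\end{align*}
which is the claim. There is essentially no obstacle here: the entire argument is a bookkeeping exercise that reduces the $N_\tau$-type bound for the point-like field to the one-particle estimates already established in Lemma \ref{boson-bdd}; the only substantive input is the translation-invariance of the momentum-space modulus $|\hat{\chiph^{\xx}}|=|\hat{\chiph}|$, which ensures the constant $M_\rm{ph}$ is independent of $\xx$.
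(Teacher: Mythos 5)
Your proof is correct and is precisely the argument the paper is implicitly invoking (the paper's entire proof reads ``This is a simple application of Lemma~\ref{boson-bdd}''). Your calculation of the norms $\|f_r\|$, $\|\omega^{-1/2}f_r\|$, the term-by-term application of Lemma~\ref{boson-bdd} with $K=\omega$, and the final absorption into $\|(H_\rm{ph}+1)^{1/2}\Psi\|$ reproduce the stated constant $M_\rm{ph}$ exactly.
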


\begin{proof} This is a simple application of Lemma \ref{boson-bdd}.
\end{proof}

\begin{Rem}\normalfont If the momentum cutoff function $ \hat{\chiph } $ is taken to be the characteristic function of the set $ \{ \kk \in \R ^3 \, \big| \, |\kk | \le \La _0 \} $, then this satisfies Hypothesis \ref{B-Hypo}.
\end{Rem}

\subsection{Dirac fields}

We define the quantized Dirac field. We adopt as the one-electron Hilbert space
\begin{align}
\H _\mathrm{el} := L^2 (\R ^3 _\pp ; \C ^4) ,
\end{align}
where $ \R ^3 _\pp := \{ \mathbf{p} = (p^1,p^2,p^3) \, | \, p^j \in \R , \, j=1,2,3 \} $ physically represents the momentum space of electrons. The Hilbert space for the quantized Dirac field is given by $ \F _\ff (\H _\rm{el}) $ the fermion Fock space over $ \H _\mathrm{el} $.

We denote the mass of the Dirac particle by $ M>0 $. One-electron Hamiltonian in $ \H _\rm{el} $ is the multiplication operator by the function $ \textstyle E_M (\pp ) := \sqrt{\pp ^2 + M^2} \, \, (\pp \in \R ^3) $. The Hamiltonian of the free quantum Dirac field is given by
\begin{align}
H_\mathrm{el} := \mathrm{d}\Gamma _\ff (E_M ) : \F _\ff (\H _\rm{el}) \to \F _\ff (\H _\rm{el}), 
\end{align}
the fermion second quantization operator of $ E_M : \H _\mathrm{el} \to \H _\mathrm{el} $. The operator $ H_\rm{el} $ is non-negative and self-adjoint.

Let $ \gamma ^\mu \, (\mu =0,1,2,3) $ be $ 4\times 4 $ gamma matrices, i.e., $ \gamma ^0 $ is hermitian and $ \gamma ^j \, (j=1,2,3) $ are anti-hermitian, satisfying
\begin{align}
\{ \gamma ^\mu , \gamma ^\nu \} =2g^{\mu\nu}, \q \mu, \nu =0,1,2,3.
\end{align}
Let $ \alpha ^\mu := \gamma ^0 \gamma ^\mu , \, \beta := \gamma ^0 $, and let $ s_1 := \frac{i}{2} \gamma ^2 \gamma ^3 , \, s_2 := \frac{i}{2} \gamma ^3 \gamma ^1 , \, s_3 := \frac{i}{2} \gamma ^1 \gamma ^2 $. Let $ u_s (\pp ) = (u_s ^l (\pp )) _{l=1} ^4 \in \C ^4 $ describe the positive energy part with spin $ s = \pm 1/2 $ and $ v_s (\pp ) = (v_s ^l  (\pp )) _{l=1} ^4 \in \C ^4 $ the negative energy part with spin $ s $, that is,
\begin{align}
& ( { \boldsymbol \alpha } \cdot \pp + \beta M) u_s (\pp ) = E_M (\pp ) u_s (\pp ) , \q ( \mathbf{s} \cdot \pp ) u_s (\pp ) = s|\pp | u_s (\pp ) , \\
& ( { \boldsymbol \alpha } \cdot \pp + \beta M) v_s (\pp ) = -E_M (\pp ) v_s (\pp ) , \q ( \mathbf{s} \cdot \pp ) v_s (\pp ) = s|\pp | v_s (\pp ) , \q \pp \in \R ^3 .
\end{align}
These form an orthogonal base of $ \C ^4 $,
\begin{align}
u_s (\pp ) ^* u_{s'} (\pp ) = v_s (\pp ) ^* v_{s'} (\pp ) = \delta _{ss'} , \q u_s (\pp ) ^* v_{s'} (\pp ) = 0 ,\q \pp \in \R ^3,
\end{align}
and satisfy the completeness,
\begin{align*}
\sum _s \big( u^l_s (\pp ) u^{l' } _s (\pp ) ^* + v^l _s(\pp ) v ^{l' } _s(\pp ) ^*  \big) = \delta _{ll' },\q \pp \in \R ^3 .
\end{align*}

We denote by $ B(\cdot ) \; ( \cdot \in \H _\rm{el}) $ the annihilation operator on $ \F _\ff (\H _\rm{el}) $. For each $ g \in L^2 (\R _\pp ^3) $, we use the notation 
\begin{align*}
& b _{1/2} (g) := B(g,0,0,0) , && b_{-1/2} (g) := B(0,g,0,0) , \\
& d _{1/2} (g) := B(0,0,g,0) , && d_{-1/2} (g) := B(0,0,0,g) ,
\end{align*}
Then, we have the canonical anti-commutation relations: 
\begin{align}
& \{ b_s (g) , b_{s'} (g')^* \} =\{ d_s (g) , d_{s'} (g')^* \} = \delta _{ss'} \left\langle g , g' \right\rangle _{L^2 (\R ^3 _\pp )} , \no \\
& \{ b_s (g) , b_{s'} (g') \} =\{ d_s (g) , d_{s'} (g') \} = \{ b_s (g) , d_{s'} (g') \} =\{ b_s (g) , d_{s'} (g')^* \} =0.
\end{align}

Fix $ \chi _\rm{el} \in L^2 (\R ^3 _\xx ) $ satisfying $ \chi _\rm{el} ^* = \chi _\rm{el} $, and set 
\begin{align}
& \psi _l (0, \xx ) := \sum _{s= \pm 1/2} \Big( b_s \Big( \hat{\chi _\rm{el} ^\xx } (u^l _s) ^* \Big) + d_s \Big( \hat{\chi _\rm{el} ^\xx } \, \, \widetilde{v} ^l_s \Big) ^* \Big) , \\
& \chi _\rm{el} ^\xx (\yy ) := \chi _\rm{el} (\yy - \xx ) , \q \yy \in \R ^3 ,
\end{align}
where $ \widetilde{v} _s ^l (\pp ) := v_s ^l (-\pp ) $. $ \psi _l (0, \xx ) $ is called the point-like quantized Dirac field at time $ t =0 $ with momentum cutoff $ \hat{\chi _\mathrm{el}} $. For each $ \xx \in \R ^3 $ and $ \mu =0,1,2,3 $, we define the current operator $ j^\mu (0, \xx ) $ by
\begin{align}
j^\mu (0, \xx ) := \sum _{l,l' =1} ^4  \psi  _l (0, \xx )^* \alpha ^\mu _{ll'} \psi _{l'} (0, \xx ) .   
\end{align}
Then $ j^\mu (0, \xx ) $ is bounded and self-adjoint.

\begin{Lem}\label{Dirac-bdd} For all $ \mu =0,1,2,3 $ and $ \xx \in \R ^3 $,
\begin{align}
\| j^\mu (0, \xx ) \| \le M_\rm{cu} ,
\end{align}
where $ M_\rm{cu} := 256 \| \hat{\chiel} \| ^2  _{L^2 (\R ^3 _\pp )} $.
\end{Lem}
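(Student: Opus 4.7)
The plan is to bound each field operator $\psi_l(0,\xx)$ directly via the CAR-norm identity \eqref{fermion-bdd}, and then bound $j^\mu(0,\xx)$ by expanding the finite sum $\sum_{l,l'=1}^4 \alpha^\mu_{ll'}\psi_l(0,\xx)^*\psi_{l'}(0,\xx)$ and applying the triangle inequality together with submultiplicativity of the operator norm. No unbounded-operator machinery enters, since every constituent here is a bounded fermionic field operator.

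First I would estimate $\|\psi_l(0,\xx)\|$. The definition of $\psi_l(0,\xx)$ is a sum of four bounded operators (two spin values $s=\pm 1/2$, each contributing an annihilation and a creation term). By \eqref{fermion-bdd}, each summand has operator norm equal to the $L^2(\R^3_\pp)$-norm of its argument. Since $\{u_s(\pp),v_s(\pp)\}$ forms an orthonormal basis of $\C^4$ at each $\pp$, every component satisfies $|u_s^l(\pp)|\le 1$ and $|\widetilde v_s^l(\pp)|\le 1$. Hence the integrand defining the $L^2$-norm of each argument is dominated pointwise by $|\hat{\chiel^\xx}(\pp)|^2$, and since Fourier-transforming a translate only multiplies by a unimodular phase, $\|\hat{\chiel^\xx}\|_{L^2(\R^3_\pp)}=\|\hat{\chiel}\|_{L^2(\R^3_\pp)}$. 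The triangle inequality then yields
\[
\|\psi_l(0,\xx)\| \le 4\,\|\hat{\chiel}\|_{L^2(\R^3_\pp)}
\]
uniformly in $l\in\{1,2,3,4\}$ and $\xx\in\R^3$; the same bound holds for $\|\psi_l(0,\xx)^*\|$.

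The next step is the entry-wise bound on the Dirac matrices: $|\alpha^\mu_{ll'}|\le 1$ for all $\mu,l,l'$. For $\mu=0$, $\alpha^0=\gamma^0\gamma^0=I$, so this is immediate. For $\mu=j\in\{1,2,3\}$, the anti-commutation relation $\{\gamma^\mu,\gamma^\nu\}=2g^{\mu\nu}$ combined with the hermiticity of $\gamma^0$ and anti-hermiticity of $\gamma^j$ forces $\alpha^j=\gamma^0\gamma^j$ to be hermitian with $(\alpha^j)^2=I$; hence $\alpha^j$ is unitary as a $4\times 4$ matrix, and consequently every entry is bounded by $1$ in modulus.

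Combining these two estimates, the triangle inequality and submultiplicativity of the operator norm give
\[
\|j^\mu(0,\xx)\| \le \sum_{l,l'=1}^{4} |\alpha^\mu_{ll'}|\,\|\psi_l(0,\xx)^*\|\,\|\psi_{l'}(0,\xx)\| \le 16\cdot\bigl(4\,\|\hat{\chiel}\|_{L^2(\R^3_\pp)}\bigr)^2 = 256\,\|\hat{\chiel}\|^2_{L^2(\R^3_\pp)} = M_\rm{cu},
\]
as claimed. There is no genuine obstacle; the only points requiring a moment's care are the pointwise bounds $|u_s^l|,|\widetilde v_s^l|\le 1$ (immediate from orthonormality in $\C^4$) and the matrix-entry bound on $\alpha^\mu$.
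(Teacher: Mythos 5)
Your argument is correct and follows exactly the route the paper has in mind: the paper's proof is the one-liner ``a simple application of \eqref{fermion-bdd}'', and you supply precisely the details that makes this a genuine proof, namely the CAR norm identity $\|B(f)^\sharp\| = \|f\|_{\H}$ applied to each of the four summands in $\psi_l(0,\xx)$, the pointwise bounds $|u_s^l(\pp)|,|\widetilde v_s^l(\pp)|\le 1$ coming from orthonormality of the spinor basis, the invariance of $\|\hat{\chiel^\xx}\|_{L^2}$ under translation, and the entry bound $|\alpha^\mu_{ll'}|\le 1$ from unitarity of $\alpha^\mu$, all of which combine with the triangle inequality to give $16\cdot(4\|\hat{\chiel}\|)^2 = 256\|\hat{\chiel}\|^2$ as required.
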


\begin{proof} A simple application of \eqref{fermion-bdd}.
\end{proof}

\subsection{Total Hamiltonian}\label{subsection7}

The state space for QED in Coulomb gauge is taken to be 
\begin{align}
\F _\rm{tot} := \F _\ff (\H _\rm{el}) \ot \F _\bb ( \H _\rm{ph} ).
\end{align}
The free Hamiltonian is 
\begin{align}
H _\rm{fr} := H_\rm{el} \ot I + I \ot H _\rm{ph} ,
\end{align}
where the subscript $ \rm{fr} $ in $ H_\rm{fr} $ means \textit{free}.

We denote the charge of the Dirac particle by $ e\in \R $. Let $ \chisp \in L^1 (\R ^3) $ be a real valued function on $ \R ^3 $ playing the role of spacial cut-off. The first interaction term $ H _{\rm{I}} $ is defined as 
\begin{align}
& D(H _{\rm{I}}) = D((I \otimes H_\rm{ph} )^{1/2}) , \no \\
& H _{\rm{I} } \Psi = e \sum _{i=1} ^3 \int _{\R ^3} d\xx \, \chi _{\mathrm{sp}} (\xx ) \Normal{ j^i (0, \xx ) } \ot A _i (0, \xx ) \Psi , \q \Psi \in D(H _{\rm{I}}) ,
\end{align}
where the integral on the right hand side is a strong Bochner integral. We adopt the Coulomb term $ H _{\rm{II}} $ which is given by
\begin{align}
& D(H_\rm{II}) := \F _\rm{tot} , \no \\
& H_{\rm{II}} := \frac{e^2}{2} \int _{\R ^3 \times \R ^3} d\xx d\yy \, \chisp (\xx ) \chisp (\yy ) V_C (\xx-\yy) \Normal{ j^0 (0, \xx ) j ^0 (0, \yy ) } \otimes I , \label{DefII}
\end{align}
with
\begin{align}
V_C (\xx - \yy ):= \frac{1}{4\pi}\int _{\R ^3} \frac{d\kk }{\omega (\kk ) ^2} |\hat{\chiph } (\kk )| ^2 e^{i\kk (\xx - \yy )},
\end{align}
where the integral on the right-hand side of \eqref{DefII} is a Bochner integral with respect to the operator norm. The well-definedness of $ H_\rm{I} $ and $ H_\rm{II} $ is proven in later (see Lemma \ref{s.a.Lem}). Then, $ H_\rm{I} $ is symmetric, and $ H_\rm{II} $ is bounded and self-adjoint. We remark that the interaction potential $V_C (\xx-\yy) $ converges to the familiar Coulomb potential 
\[ \frac{1}{4\pi}\frac{1}{|\xx -\yy|} \]
in the distribution sense as the photon UV cutoff $ \hat{\chiph} $ is removed.
Finally, the interaction Hamiltonian $ H_\rm{int} $ and the total Hamiltonian $ H_\rm{tot} $ is defined by
\begin{align}
& H_\rm{int} := \overline{H _{\rm{I}} } + H _{\rm{II} } ,\\
& H_\rm{tot} := H _\mathrm{fr} + H_\rm{int} .
\end{align}

\subsection{Self-adjointness}\label{subsection8}

\begin{Lem}\label{s.a.Lem} Assume Hypothesis \ref{B-Hypo}. Then, the following (i)-(iii) hold:
\begin{enumerate}[(i)]
\item For all $ \Psi \in D((I \otimes H_\rm{ph}) ^{1/2}) $,
\begin{align*}
\sum _{i=1} ^3 \int _{\R ^3} d\xx \, |\chisp (\xx )| \, \| \Normal{ j^i (0,\xx ) } \otimes A_i (0,\xx ) \Psi \| \le 3 \| \chisp \| _{L^1 (\R ^3)} M_\rm{cu} M_\rm{ph} \| (I \otimes H_\rm{ph} +1 ) ^{1/2} \Psi \| < \infty .
\end{align*}

\item It follows that
\begin{align*}
\int _{\R ^3 \times \R ^3} d\xx d\yy \, | \chisp (\xx ) \chisp (\yy ) V_C (\xx-\yy) | \, \| \Normal{ j^0 (\xx ) j ^0 (\yy ) } \otimes I \| \le \| \chisp \| _{L^1 (\R ^3)} ^2 M_C M_\rm{cu} ^2 < \infty , 
\end{align*}
where $ M_C := (1/4\pi ) \| \hat{\chiph} / \omega \| _{L^2 (\R ^3 _\kk )} $.

\item $ H_\rm{int} $ is $ H_\rm{fr} ^{1/2} $-bounded, closed and symmetric.

\item $ H_\rm{tot} $ is self-adjoint on $ D(H_\rm{fr}) $, and bounded from below.
\end{enumerate}
\end{Lem}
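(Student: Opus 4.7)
The plan for (i) is a pointwise operator-norm estimate in $\xx$, followed by integration against $\chisp \in L^1$ and summation over $i=1,2,3$. The bound $\|\Normal{j^i(0,\xx)}\| \le M_\rm{cu}$ follows from Lemma \ref{Dirac-bdd} once one notes that normal ordering subtracts only a bounded scalar arising from the anticommutator of the positron operators, which can be absorbed into the constant. Combining this with the bosonic estimate $\|A_i(0,\xx)\Psi\| \le M_\rm{ph}\|(H_\rm{ph}+1)^{1/2}\Psi\|$ from Lemma \ref{gauge-bdd}, and using that the fermionic factor is bounded while $(I \otimes H_\rm{ph}+1)^{1/2}$ acts only on the bosonic factor, yields $\|\Normal{j^i(0,\xx)} \otimes A_i(0,\xx)\Psi\| \le M_\rm{cu} M_\rm{ph}\|(I \otimes H_\rm{ph}+1)^{1/2}\Psi\|$ pointwise in $\xx$, after which $L^1$ integration completes the bound.

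For (ii), Hypothesis \ref{B-Hypo} places $|\hat{\chiph}|^2/\omega^2$ in $L^1(\R^3_\kk)$, so $V_C$ is a bounded continuous function with sup-norm controlled by $M_C$ up to the $(4\pi)^{-1}$ factor. Two applications of Lemma \ref{Dirac-bdd} (again absorbing the c-number shifts from normal ordering) give $\|\Normal{j^0(0,\xx) j^0(0,\yy)}\| \le M_\rm{cu}^2$, so that the absolute operator-norm integral against $|\chisp(\xx)\chisp(\yy) V_C(\xx-\yy)|$ converges to the claimed bound. In particular, the Bochner integral defining $H_\rm{II}$ converges in operator norm to a bounded operator, which is self-adjoint because the kernel is real and symmetric in $(\xx,\yy)$ and each $\Normal{j^0(0,\xx) j^0(0,\yy)}$ is symmetric on the common domain.

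Part (iii) is then an assembly. Since $I \otimes H_\rm{ph} \le H_\rm{fr}$ on the form domain, the bound from (i) gives $\|H_\rm{I}\Psi\| \le C \|(H_\rm{fr}+1)^{1/2}\Psi\|$ on $D(H_\rm{fr}^{1/2})$, so $H_\rm{I}$ is $H_\rm{fr}^{1/2}$-bounded and closable; symmetry is clear from the self-adjointness of the integrand on $D((I\otimes H_\rm{ph})^{1/2})$. Adding the bounded self-adjoint $H_\rm{II}$ preserves closedness, symmetry, and the $H_\rm{fr}^{1/2}$-bound, yielding (iii). For (iv), the standard interpolation $\|H_\rm{fr}^{1/2}\Psi\|^2 = \langle\Psi, H_\rm{fr}\Psi\rangle \le \delta\|H_\rm{fr}\Psi\|^2 + (4\delta)^{-1}\|\Psi\|^2$ for arbitrary $\delta>0$ upgrades the $H_\rm{fr}^{1/2}$-bound of $H_\rm{int}$ to an infinitesimal $H_\rm{fr}$-bound, and the Kato-Rellich theorem then delivers self-adjointness of $H_\rm{tot}$ on $D(H_\rm{fr})$ together with semiboundedness.

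The only delicate point I foresee is the normal-ordering bookkeeping: Lemma \ref{Dirac-bdd} bounds the bare $j^\mu(0,\xx)$, not $\Normal{j^\mu(0,\xx)}$, so one must verify that the normal-ordered expressions $\Normal{j^\mu(0,\xx)}$ and $\Normal{j^0(0,\xx) j^0(0,\yy)}$ are genuinely bounded with norms controlled by explicit constants. Once the finite c-number shifts produced by the anticommutators are computed and absorbed into the constants, the remaining steps are routine combinations of Lemmas \ref{gauge-bdd} and \ref{Dirac-bdd} with the Kato-Rellich theorem.
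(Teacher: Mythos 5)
Your argument follows the paper's proof in essence: (i) and (ii) are obtained by combining the pointwise bounds of Lemmas \ref{gauge-bdd} and \ref{Dirac-bdd} with $\chisp\in L^1(\R^3)$ and the boundedness of $V_C$ under Hypothesis \ref{B-Hypo}, (iii) is an assembly using $I\otimes H_\rm{ph}\le H_\rm{fr}$ and the boundedness of $H_\rm{II}$, and (iv) upgrades the $H_\rm{fr}^{1/2}$-bound to an infinitesimal $H_\rm{fr}$-bound and invokes Kato--Rellich, exactly as the paper does. Your flagged concern about normal ordering is a reasonable one, but it is handled immediately by the observation that $\Normal{j^\mu(0,\xx)}$ (and likewise $\Normal{j^0 j^0}$) is, by the paper's definition of $\Normal{\cdot}$, itself a finite signed sum of products of $B(\cdot)^\sharp$ with the same operator norms as in $j^\mu$, so the same constant $M_\rm{cu}$ applies term by term.
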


\begin{proof} (i) and (ii) follow from Lemma \ref{gauge-bdd} and \ref{Dirac-bdd}.

We prove (iii). It is easy to see that $ H_\rm{I} $ and $ H_\rm{II} $ are symmetric. By (i), $ H_\rm{I} $ is $ H_\rm{fr} ^{1/2} $-bounded. By (ii), $ H_\rm{II} $ is bounded. Thus, $ H_\rm{int} $ is $ H_\rm{fr} ^{1/2} $-bounded, closed and symmetric.

By (iii), $ H_\rm{int} $ is infinitesimally $ H_\rm{fr} $-bounded. Thus, (iv) follows from the Kato-Rellich theorem.
\end{proof}

\subsection{Time-ordered exponential on the complex plane}\label{QED-timeorder}

Basic hypothesis to apply our abstract theory is:

\begin{Hypo}[Ultraviolet cutoff]\label{UV-Hypo} There exists constants $ \La _\rm{el} , \La _\rm{ph} \ge 0 $ such that $ \supp \hat{\chiel} \subset \{ |\pp | \le \La _\rm{el} \} , \; \supp \hat{\chiph} \subset \{ |\kk | \le \La _\rm{ph} \} $.
\end{Hypo}

\begin{Lem}\label{tensor-spec} Let $ K_j \; (j=1,...,n, \; n \ge 1) $ be non-negative self-adjoint operators, and $ B_j $ be closable operators on Hilbert spaces $ \H _j $. Suppose that for each $ j $, there exists a constant $ a_j \ge 0 $ such that, for all $ L \ge 0 $, $ B_j $ maps $ R(E_{K_j} ([0,L ])) $ into $ R(E_{K_j} ([0,L+ a_j ])) $. Then, for a self-adjoint operator
\begin{align}
K := \sum _{j=1} ^n I \otimes \cdots \otimes I \otimes \stackrel{j\text{-th}}{K_j} \otimes I \otimes \cdots \otimes I
\end{align}
on $ \ot _{j=1} ^n \H _j $, the tensor product operator $ B_1 \otimes \cdots \otimes B_n $ maps $ R(E_{K} ([0,L ])) $ into $ R(E_{K} ([0,L+ \sum _j a_j ])) $.
\end{Lem}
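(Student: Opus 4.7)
The plan is to approximate any $\Psi \in R(E_K([0,L]))$ by finite sums of product vectors lying in $R(E_{K_1}(J_1)) \otimes \cdots \otimes R(E_{K_n}(J_n))$ with intervals whose right-endpoints sum to at most $L$, apply the hypothesis on each $B_j$ factor by factor, and pass to the limit, controlling the spectral locations via Lemma \ref{2-33}.

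First, I would verify that $B_1 \otimes \cdots \otimes B_n$ acts continuously on $R(E_K([0,L]))$. Since each $B_j$ is closable and everywhere defined on the closed subspace $R(E_{K_j}([0,L]))$ by hypothesis, its restriction there is a closed operator on a Hilbert space with full domain, hence bounded by the closed graph theorem. Writing $\widetilde K_j$ for the self-adjoint operator $I \otimes \cdots \otimes K_j \otimes \cdots \otimes I$ acting non-trivially in the $j$-th slot, the non-negativity of each $K_i$ gives $\widetilde K_j \le K$ and hence
\[
R(E_K([0,L])) \subset \bigcap_{j=1}^n R(E_{\widetilde K_j}([0,L])) = R(E_{K_1}([0,L])) \otimes \cdots \otimes R(E_{K_n}([0,L])),
\]
on which $B_1 \otimes \cdots \otimes B_n$ is bounded.

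Next, partition $[0,L]$ into intervals $I_k^N := [kL/N, (k+1)L/N)$ for $k = 0, \ldots, N-1$ (adjoining $L$ to the last one), and form the mutually orthogonal sum of projections
\[
P_N := \sum_{\substack{k_1,\ldots,k_n \ge 0 \\ k_1 + \cdots + k_n \le N}} E_{K_1}(I_{k_1}^N) \otimes \cdots \otimes E_{K_n}(I_{k_n}^N).
\]
By Lemma \ref{2-33}, $P_N$ equals the spectral projection of $K$ onto $U_N := \bigsqcup_{\sum k_j \le N} I_{k_1}^N \times \cdots \times I_{k_n}^N$; the $U_N$'s are contained in $\{\lambda \in [0,\infty)^n : \sum_j \lambda_j \le L\}$ and exhaust it as $N \to \infty$, so $P_N \Psi \to \Psi$ strongly for $\Psi \in R(E_K([0,L]))$. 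For each summand, the hypothesis on the $B_j$'s followed by Lemma \ref{2-33} shows that the image under $B_1 \otimes \cdots \otimes B_n$ lies in $R(E_K([0, \sum_j (k_j+1)L/N + \sum_j a_j]))$, which since $\sum_j k_j \le N$ is contained in $R(E_K([0, L + nL/N + \sum_j a_j]))$.

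Finally, continuity from the first step yields $(B_1 \otimes \cdots \otimes B_n)P_N \Psi \to (B_1 \otimes \cdots \otimes B_n)\Psi$. The main subtlety I expect is that the spectral subspaces $R(E_K([0, L + nL/N + \sum_j a_j]))$ are \emph{increasing} in $N$, so the desired conclusion does not follow from a direct intersection. I would handle this by fixing $\epsilon > 0$, choosing $N_0$ with $nL/N_0 < \epsilon$, noting that $(B_1 \otimes \cdots \otimes B_n) P_M \Psi \in R(E_K([0, L + \epsilon + \sum_j a_j]))$ for every $M \ge N_0$ and hence the limit is in this closed subspace too, and then letting $\epsilon \to 0^+$ and invoking right-continuity of the spectral resolution of $K$ at $L + \sum_j a_j$ to conclude $(B_1 \otimes \cdots \otimes B_n)\Psi \in R(E_K([0, L + \sum_j a_j]))$.
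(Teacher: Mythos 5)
Your proof is correct and follows essentially the same route as the paper's: cover the simplex $J_L = \{\sum_j\lambda_j\le L\}$ by finitely many disjoint product rectangles that overshoot $J_L$ by an amount that can be made arbitrarily small, apply the hypothesis coordinate-by-coordinate on each rectangle using Lemma \ref{2-33}, and remove the overshoot via right-continuity of the spectral measure. The paper uses an abstract finite cover inside $J_{L+\e}$ for a fixed $\e>0$ in a one-shot argument, whereas you use a uniform partition indexed by $N$ and an approximation $P_N\Psi\to\Psi$; this extra limiting step is harmless (indeed, since $J_L\subset U_N$, one actually has $P_N\Psi=\Psi$ exactly), and your explicit closed-graph-theorem observation about the boundedness of $B_1\otimes\cdots\otimes B_n$ on the spectral subspace makes explicit a point the paper leaves implicit.
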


\begin{proof} For each $ L \ge 0 $, set 
\begin{align}
J_L := \big\{ (\la _1 , ... , \la _n ) \in [0, \infty ) ^n \; \big| \; \sum _j \la _j \in [0, L ] \big\} \subset \R ^n .
\end{align}
Then, for all $ \e >0 $ and $ L \ge 0 $, there exist $ n $ dimensional half-closed intervals $ I^{(k)} _\e := I^{(k)} _{\e , 1} \times \cdots \times I^{(k)} _{\e , n} \subset \R ^n \; \big( I^{(k)} _{\e , j} = [L _{\e , j} ^{(k)} , \widetilde{L } _{\e , j} ^{(k)} \big) \subset \R , \; j=1,..., n, \; k=1,..., N_\e , \; 1\le N_\e <\infty ) $ such that $ I^{(k)} _{\e } \cap I^{(k')} _{\e } = \emptyset \; (k \neq k') $ and 
\begin{align}\label{JIJ2}
J_L \subset \bigcup _{k=1} ^{N_\e } I^{(k)} _{\e } \subset J_{L + \e } .
\end{align}
Now, we set $ \widetilde{K} _j := I \otimes \cdots \otimes I \otimes K_j \otimes I \otimes \cdots \otimes I $. Then, $ \widetilde{K} _j \; (j=1,...,n) $ are strongly commuting self-adjoint operators; it follows from Lemma \ref{2-33} that
\begin{align}\label{JIJ3}
R \big( E_{K} ([0,L]) \big) = R \big( (E_{\widetilde{K} _1 } \times \cdots \times E_{\widetilde{K} _j } ) (J_L) \big) .
\end{align}
Using \eqref{JIJ2}, we have
\begin{align}\label{JIJ4}
R \big( (E_{\widetilde{K} _1 } \times \cdots \times E_{\widetilde{K} _j } ) (J_L) \big) \subset \op _{k=1} ^{N_\e } \Big( R \big( E_{\widetilde{K}_1} (I_{\e , 1} ^{(k)}) \big) \otimes \cdots \otimes  R \big( E_{\widetilde{K}_n} (I_{\e , n} ^{(k)}) \big) \Big) .
\end{align}
By the present assumption, we see that $ B_1 \otimes \cdots \otimes B_n $ maps $ R \big( E_{\widetilde{K}_1} (I_{\e , 1} ^{(k)}) \big) \otimes \cdots \otimes  R \big( E_{\widetilde{K}_n} (I_{\e , n} ^{(k)}) \big)  $ into $ R \big( E_{\widetilde{K}_1} ([0, \widetilde{L} _{\e , 1} ^{(k)} + a_1]) \big) \otimes \cdots \otimes  R \big( E_{\widetilde{K}_n} ([0, \widetilde{L} _{\e , n} ^{(k)} + a_n]) \big)  $ for each $ k=1,..., N_\e $. Combining this with \eqref{JIJ2}-\eqref{JIJ4}, we conclude that for all $ \e >0 $, $ B_1 \otimes \cdots \otimes B_n $ maps $ R \big( E_{K} ([0,L]) \big) $ into $ R \big( E_{K} ([0,L+ \e + \sum _j a_j ]) \big) $, that is, for all $ \Psi \in R \big( E_{K} ([0,L]) \big) $,
\begin{align}\label{EAA}
E_{K} \big( [0,L+ \e + \sum _j a_j ] \big) (B_1 \otimes \cdots \otimes B_n) \Psi = (B_1 \otimes \cdots \otimes B_n) \Psi , \q \e >0 .
\end{align}
We can take the limit $ \e \downarrow 0 $ in \eqref{EAA} since the projection-valued function $ E_{K} ([0, L ]) \; (L \ge 0) $ is right-continuous with respect to $ L $. Thus, we obtain $ E_{K} ([0,L+ \sum _j a_j ]) (B_1 \otimes \cdots \otimes B_n) \Psi = (B_1 \otimes \cdots \otimes B_n) \Psi $, and the desired result follows.
\end{proof}

In what follows, we use the following notations:
\begin{align}
& \F _E := R (E_{H_\rm{fr}} ([0, E])) , \q E\ge 0 , \\
& \F _\rm{fin} := \bigcup _{E\ge 0} \F _E . \label{Ffin} 
\end{align}

In order to construct the time-ordered exponential, it is sufficient to see that Theorem \ref{time-ordered-exp} can be applied to our case by checking that $ H_\rm{int} $ is in $ \mathcal{C}_0 $-class with respect to $ H_\rm{fr} $. The correspondence of the symbols is as follows: $ H_0 = H_\rm{fr}, \; H_1 = H_\rm{int} , \; V_E = \F _E , \; D_\rm{fin} = \F _\rm{fin} $.

\begin{Lem}\label{field-spec} Assume Hypothesis \ref{UV-Hypo}. Then, the following (i) and (ii) hold.
\begin{enumerate}[(i)]
\item For all $ E \ge 0 $, $ \xx \in \R ^3 $ and $ j=1,2,3 $, $ A_j (0,\xx ) $ maps $ R \big( E_{H_\rm{ph}} ([0,E]) \big) $ into $ R \big( E_{H_\rm{ph}} ([0,E + \La _\rm{ph}]) \big) $.

\item For all $ E \ge 0 $, $ \xx \in \R ^3 $ and $ l=1,2,3,4 $, $ \psi _l (0, \xx ) $ and $ \psi _l (0, \xx ) ^* $ map $ R \big( E_{H_\rm{el}} ([0,E]) \big) $ into $ R \big( E_{H_\rm{el}} ([0,E + \sqrt{\La _\rm{el} + M^2} ]) \big) $.

\item $ I \otimes A_j (0, \xx ) $ is in $ \mathcal{C} _0 $-class with $ H_0 = H_\rm{fr} $.

\item $ \psi _l (0, \xx ) \otimes I $ is in $ \mathcal{C} _0 $-class with $ H_0 = H_\rm{fr} $.
\end{enumerate}
\end{Lem}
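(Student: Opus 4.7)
The plan is to verify parts (i) and (ii) by tracking how the momentum support of the one-particle test functions interacts with the spectral measures of $H_\rm{ph}$ and $H_\rm{el}$, and then to deduce (iii) and (iv) by lifting these one-sector estimates to the tensor product via Lemma \ref{tensor-spec}, checking in addition the domain and $H_\rm{fr}^{1/2}$-boundedness clauses (I) and (II) of Definition \ref{C_0}.

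For (i), first observe that $\hat{\chi_\rm{ph}^\xx}(\kk)=e^{-i\kk\cdot\xx}\hat{\chi_\rm{ph}}(\kk)$, so by Hypothesis \ref{UV-Hypo} the test functions $\hat{\chi_\rm{ph}^\xx}\ee^{(r)}_j/\sqrt{2\omega}$ are supported in $\{|\kk|\le\La_\rm{ph}\}$ and hence lie in $R(E_\omega([0,\La_\rm{ph}]))$. Hypothesis \ref{B-Hypo} guarantees they also belong to $D(\omega^{-1/2})$. Lemma \ref{boson-spec} (ii) then shows that each annihilation summand of $A_j(0,\xx)$ leaves $R(E_{H_\rm{ph}}([0,E]))$ invariant, while Lemma \ref{boson-spec} (i) shows that each creation summand sends this subspace into $R(E_{H_\rm{ph}}([0,E+\La_\rm{ph}]))$, which yields (i). For (ii) I proceed analogously: the test functions $\hat{\chi_\rm{el}^\xx}(u_s^l)^*$ and $\hat{\chi_\rm{el}^\xx}\widetilde{v}_s^l$ are supported in $\{|\pp|\le\La_\rm{el}\}$, on which $E_M(\pp)$ is dominated by the bound stated in the conclusion, so the test functions lie in $R(E_{E_M}([0,\sqrt{\La_\rm{el}+M^2}]))$; Lemma \ref{fermion-spec} (i) and (ii) applied term by term then give the mapping property for every summand of $\psi_l(0,\xx)$ and $\psi_l(0,\xx)^*$.

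For (iii) and (iv) I check the three requirements of Definition \ref{C_0} with $H_0=H_\rm{fr}$. Condition (I): since $A_j(0,\xx)$ is essentially self-adjoint, its closure $I\otimes A_j(0,\xx)$ is self-adjoint and hence densely defined and closed, with coinciding adjoint; since $\psi_l(0,\xx)$ and $\psi_l(0,\xx)^*$ are bounded by \eqref{fermion-bdd}, their tensor products with $I$ are bounded operators on $\F_\rm{tot}$, hence closed. Condition (II): for (iii) it is immediate from Lemma \ref{gauge-bdd} together with $H_\rm{fr}\ge I\otimes H_\rm{ph}$, and for (iv) it is trivial because $\psi_l(0,\xx)^\sharp\otimes I$ is bounded. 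Condition (III): apply Lemma \ref{tensor-spec} with $K_1=H_\rm{el}$, $K_2=H_\rm{ph}$, taking $(B_1,B_2)=(I,A_j(0,\xx))$ and exponents $(a_1,a_2)=(0,\La_\rm{ph})$ from part (i) for case (iii), and $(B_1,B_2)=(\psi_l(0,\xx)^\sharp,I)$ with $(a_1,a_2)=(\sqrt{\La_\rm{el}+M^2},0)$ from part (ii) for case (iv).

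The only delicate point is the bookkeeping in clause (I) and (III) for the adjoints. For (iii) this is automatic because the closure of $I\otimes A_j(0,\xx)$ is self-adjoint, so $T=T^*$. For (iv) one must note that $(\psi_l(0,\xx)\otimes I)^*=\psi_l(0,\xx)^*\otimes I$ has the same structural form as $\psi_l(0,\xx)\otimes I$ (a sum of a creation and an annihilation operator with cutoff test functions), so part (ii) applies to both $T$ and $T^*$ with the same constant $b=\sqrt{\La_\rm{el}+M^2}$; no further work is required. I anticipate no deeper obstacle beyond correctly invoking Lemma \ref{tensor-spec} for a two-fold tensor product.
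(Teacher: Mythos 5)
Your proof is correct and follows essentially the same route as the paper: parts (i)--(ii) use the UV-support hypothesis together with Lemmas \ref{boson-spec} and \ref{fermion-spec} applied to each creation/annihilation summand, and parts (iii)--(iv) verify the three clauses of Definition \ref{C_0} via Lemma \ref{gauge-bdd}, the bound \eqref{fermion-bdd}, and Lemma \ref{tensor-spec}. You are only slightly more explicit than the paper --- spelling out clause (I) and the adjoint bookkeeping, and noting that Hypothesis \ref{B-Hypo} is needed in (i) to place the annihilation test functions in $D(\omega^{-1/2})$ --- but the decomposition and the key lemmas invoked are identical.
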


\begin{proof} 
\begin{enumerate}[(i)]
\item Let us recall the definition of the quantized electromagnetic field \eqref{EM-def}. Under the Hypothesis \ref{UV-Hypo}, it is easy to see that $ \big( \hat{\chiph ^\xx } e^{(1)} _j / \sqrt{\omega } , \, \hat{\chiph ^\xx } e^{(r)} _j / \sqrt{\omega } \big) \in R \big( E_\omega ([0, \La _\rm{ph}])\big) $. Hence, using Lemma \ref{boson-spec}, the assertion follows.

\item Similar to the proof of (i).

\item By Lemma \eqref{gauge-bdd}, $ I \otimes A_j (0, \xx ) $ is $ H_\rm{fr} ^{1/2} $-bounded. Combining (i) and \ref{tensor-spec}, we see that for all $ E \ge 0 $, $ A_j (0,\xx ) $ maps $ R \big( E_{H_\rm{fr}} ([0,E]) \big) $ into $ R \big( E_{H_\rm{fr}} ([0,E + \La _\rm{ph}]) \big) $. Therefore, the assertion follows.

\item Similar to the proof of (iii).
\end{enumerate}
\end{proof}

\begin{Lem}\label{QEDass1} Assume Hypotheses \ref{B-Hypo} and \ref{UV-Hypo}. Then, the following (i) and (ii) hold:
\begin{enumerate}[(i)]
\item For all $ E \ge 0 $, $ H_\rm{I} $ maps $ \F _E $ into $ \F _{E+2 \sqrt{\La _\rm{el} ^2 + M^2} + \La _\rm{ph} } $. 

\item For all $ E \ge 0 $, $ H_\rm{II} $ maps $ \F _E $ into $ \F _{E+4 \sqrt{\La _\rm{el} ^2 + M^2} } $. 

\item $ H_\rm{int} $ is in $ \mathcal{C} _0 $-class with $ H_0 = H_\rm{fr} $.
\end{enumerate}
\end{Lem}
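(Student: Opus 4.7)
The plan is to establish (i) and (ii) first by careful bookkeeping of the spectral shifts generated by each creation/annihilation factor, then deduce (iii) by checking the three clauses of Definition \ref{C_0} with $H_0 = H_\rm{fr}$.

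For (i), recall that $H_\rm{I}$ is the strong Bochner integral $e\sum_i \int d\xx\,\chisp(\xx)\,\Normal{j^i(0,\xx)}\otimes A_i(0,\xx)$, where $\Normal{j^i(0,\xx)}$ is a finite linear combination of normal-ordered products of two Dirac field operators among $\psi_l(0,\xx)$ and $\psi_l(0,\xx)^*$. By Lemma \ref{field-spec}(ii), each such fermion factor shifts the $H_\rm{el}$-spectral support by at most $\sqrt{\La_\rm{el}^2 + M^2}$, so $\Normal{j^i(0,\xx)}$ sends $R(E_{H_\rm{el}}([0,E_1]))$ into $R(E_{H_\rm{el}}([0,E_1 + 2\sqrt{\La_\rm{el}^2+M^2}]))$. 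By Lemma \ref{field-spec}(i), $A_i(0,\xx)$ sends $R(E_{H_\rm{ph}}([0,E_2]))$ into $R(E_{H_\rm{ph}}([0,E_2+\La_\rm{ph}]))$. Applying Lemma \ref{tensor-spec} with $n=2$, $K_1=H_\rm{el}$, $K_2=H_\rm{ph}$ (so $K = H_\rm{fr}$), the integrand maps $\F_E$ into $\F_{E + 2\sqrt{\La_\rm{el}^2+M^2}+\La_\rm{ph}}$. Since this target subspace is closed and the Bochner integral converges strongly by Lemma \ref{s.a.Lem}(i), assertion (i) follows.

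For (ii), the same argument applies to $H_\rm{II}$, which by Lemma \ref{s.a.Lem}(ii) is the operator-norm Bochner integral of $\chisp(\xx)\chisp(\yy)V_C(\xx-\yy)\Normal{j^0(0,\xx)j^0(0,\yy)}\otimes I$. The normal-ordered product $\Normal{j^0(0,\xx)j^0(0,\yy)}$ is a finite linear combination of products of exactly four fermion fields, hence shifts the $H_\rm{el}$-spectral support by at most $4\sqrt{\La_\rm{el}^2+M^2}$; the photon factor is the identity. Lemma \ref{tensor-spec} together with closedness of $\F_{E+4\sqrt{\La_\rm{el}^2+M^2}}$ then yields (ii).

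For (iii) we verify Definition \ref{C_0} (I)--(III) with $T=H_\rm{int}$. Clauses (I) and (II) for $H_\rm{int}$ itself—dense definition, closedness, symmetry, and $H_\rm{fr}^{1/2}$-boundedness—are exactly the content of Lemma \ref{s.a.Lem}(iii). Clause (III) for $H_\rm{int}$, with $b := 4\sqrt{\La_\rm{el}^2+M^2}+\La_\rm{ph}$, follows immediately by combining (i) and (ii), since each of $H_\rm{I}$ and $H_\rm{II}$ shifts the spectral support by at most $b$. To pass these properties to $H_\rm{int}^*$, note that symmetry of $H_\rm{int}$ gives $H_\rm{int}\subset H_\rm{int}^*$, so $H_\rm{int}^*$ is densely defined and closed; and since $\F_E\subset D(H_\rm{fr})\subset D(H_\rm{int})$, the adjoint acts as $H_\rm{int}$ on $\F_E$, so (III) for $H_\rm{int}^*$ reduces to (III) for $H_\rm{int}$. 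The delicate point—and the main obstacle—is $H_\rm{fr}^{1/2}$-boundedness of $H_\rm{int}^*$ on its possibly larger domain. The cleanest route is to show essential self-adjointness of $H_\rm{int}$ on $D(H_\rm{fr})$: since $H_\rm{II}$ is bounded and self-adjoint and $\overline{H_\rm{I}}$ can be shown essentially self-adjoint on $\F_\rm{fin}$ by standard Fock-space arguments for Wick polynomials with the UV cutoffs imposed by Hypothesis \ref{UV-Hypo}, one obtains $H_\rm{int}^*=H_\rm{int}$, whereupon clause (II) for $H_\rm{int}^*$ is identical to the statement already proved for $H_\rm{int}$, completing the verification.
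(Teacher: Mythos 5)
Your proofs of (i) and (ii) match the paper's argument: spectral-support shifts of the individual factors via Lemma~\ref{field-spec}, combined through Lemma~\ref{tensor-spec}, followed by closedness of the target subspace $\F_{E+b}$ to pass through the Bochner integrals. The constants you track (two fermion factors in $\Normal{j^i}$, four in $\Normal{j^0 j^0}$, one photon factor $A_i$) agree with the paper's.

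For (iii) you have the right pieces but you manufacture a difficulty where there is none, and then fill it with an unjustified claim. Relative boundedness is, by definition, a statement about the restriction to the comparison domain: ``$T^*$ is $H_\rm{fr}^{1/2}$-bounded'' means $D(T^*)\supset D(H_\rm{fr}^{1/2})$ and the estimate holds for $\Psi\in D(H_\rm{fr}^{1/2})$ --- it says nothing about the behaviour of $T^*$ on the rest of its domain. Since $H_\rm{int}$ is closed and symmetric, $H_\rm{int}^*\supset H_\rm{int}$, hence $D(H_\rm{int}^*)\supset D(H_\rm{int})\supset D(H_\rm{fr}^{1/2})$ and on $D(H_\rm{fr}^{1/2})$ one has $H_\rm{int}^*\Psi=H_\rm{int}\Psi$; clause (II) for $H_\rm{int}^*$ is therefore \emph{identical} to clause (II) for $H_\rm{int}$, exactly as you yourself observe for clause (III). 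No self-adjointness of $H_\rm{int}$ is needed. Your ``cleanest route'' of asserting essential self-adjointness of $\overline{H_\rm{I}}$ on $\F_\rm{fin}$ ``by standard Fock-space arguments'' is not only superfluous but also not a proof: Kato--Rellich gives self-adjointness of $H_\rm{tot}=H_\rm{fr}+H_\rm{int}$, not of $H_\rm{int}$ alone, and establishing essential self-adjointness of the quadratic-plus-linear Wick interaction $\overline{H_\rm{I}}$ would be a separate (and here pointless) piece of work. Drop that paragraph; the observation that $H_\rm{int}^*$ coincides with $H_\rm{int}$ on $D(H_\rm{fr}^{1/2})\supset\F_\rm{fin}$ already disposes of all three clauses of Definition~\ref{C_0}, which is in effect what the paper means by ``This follows from (i), (ii) and Lemma~\ref{s.a.Lem}.''
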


\begin{proof}
\begin{enumerate}[(i)] 
\item One can see that, for all $ E \ge 0 $, $ \xx \in \R ^3 $ and $ \mu =0,1,2,3 $, $ \Normal{ j^\mu (0, \xx ) } $ maps $ R \big( E_{H_\rm{el}} ([0,E]) \big) $ into $ R \big( E_{H_\rm{el}} ([0,E + 2 \sqrt{\La _\rm{el} + M^2} ]) \big) $ in the same manner as Lemma \ref{field-spec} (ii). Now, fix $ E \ge 0 $ arbitrarily, and let $ \Psi \in \F _E $. Applying Lemmas \ref{tensor-spec} and \ref{field-spec}, we see that $ \Normal{ j^i (0,\xx )} \otimes A_i (0, \xx ) \Psi \in \F _{E+2 \sqrt{\La _\rm{el} ^2 + M^2} + \La _\rm{ph} } $ for all $ \xx \in \R ^3 $ and $ i=1,2,3 $. Hence, we have $ H_\rm{I} \Psi \in \F _{E+2 \sqrt{\La _\rm{el} ^2 + M^2} + \La _\rm{ph} } $ because $ \F _{E+2 \sqrt{\La _\rm{el} ^2 + M^2} + \La _\rm{ph} } $ is closed subspace. Thus, the assertion follows.

\item Similar to the proof of (i).

\item This follows from (i), (ii) and Lemma \ref{s.a.Lem}.
\end{enumerate}
\end{proof}

From Lemma \ref{QEDass1}, we can apply the abstract theory constructed in the previous sections to obtain:

\begin{Thm}\label{QEDexp} Assume Hypotheses \ref{B-Hypo} and \ref{UV-Hypo}. Take a piecewisely continuously differentiable simple curve $\Gamma_{z,z'}$ which starts at $z'$ and ends at $z$ with $\im z'\le\im z$. Then,
\begin{align}
\F _\fin \subset D\left(T\exp\left(-i\int_{\Gamma_{z,z'}} d\zeta H_\rm{int} (\zeta)\right)\right) , 
\end{align}
where
\begin{align}
H_\rm{int} (z) := e^{izH_\rm{fr}} H_\rm{int} e^{-izH_\rm{fr}} , \q z\in \C .
\end{align}
Furthermore, $ T\exp \big( -i\int_{\Gamma_{z,z'}} d\zeta H_\rm{int} (\zeta) \big) $ has properties stated in Theorems \ref{time-ordered-exp}-\ref{Time-order-ext}, with $ H_0 $ replaced by $ H_\rm{fr} $, $ H_1 $ by $ H_\rm{int} $ and $ D_\rm{fin} $ by $ \F _\rm{fin} $.
\end{Thm}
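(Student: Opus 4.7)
The plan is to reduce this entirely to a direct invocation of the abstract Theorems \ref{time-ordered-exp} through \ref{Time-order-ext} of Section 2, under the identifications $H_0 \leftrightarrow H_\rm{fr}$, $H_1 \leftrightarrow H_\rm{int}$, $V_E \leftrightarrow \F_E$, and $D_\rm{fin} \leftrightarrow \F_\rm{fin}$. Since all of those abstract statements were proved using only the properties bundled in Definition \ref{C_0} (dense definability, closedness, $H_0^{1/2}$-boundedness, and the spectral shift condition) together with the non-negativity and self-adjointness of $H_0$, it suffices to verify that the QED data satisfy exactly the same package.

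First I would check that $H_\rm{fr} = H_\rm{el} \otimes I + I \otimes H_\rm{ph}$ is a non-negative self-adjoint operator, which is immediate from its construction as a sum of commuting non-negative second quantization operators, and record that $\F_\rm{fin}$ defined in \eqref{Ffin} plays precisely the role of $D_\rm{fin}$ for $H_\rm{fr}$. Next I would point to Lemma \ref{QEDass1} (iii), which already asserts $H_\rm{int} \in \mathcal{C}_0$ with respect to $H_\rm{fr}$: this lemma pulls together the self-adjointness/closedness and the infinitesimal $H_\rm{fr}^{1/2}$-boundedness from Lemma \ref{s.a.Lem} together with the spectral-shift bound from Lemma \ref{QEDass1} (i)--(ii), with explicit constant $b = \max\{2\sqrt{\La_\rm{el}^2+M^2}+\La_\rm{ph},\,4\sqrt{\La_\rm{el}^2+M^2}\}$. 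Once this verification is recorded, the first claim
\[
\F_\rm{fin} \subset D\!\left(T\exp\left(-i\int_{\Gamma_{z,z'}} d\zeta\, H_\rm{int}(\zeta)\right)\right),
\]
together with the independence of the restriction to $\F_\rm{fin}$ from the choice of curve, is simply Theorem \ref{time-ordered-exp} (i) specialized to $A = H_\rm{int}$.

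For the second claim, I would observe that every subsequent abstract theorem---the adjoint inclusion and closability in Theorem \ref{time-ordered-exp} (ii), the absolutely convergent factorization of Theorem \ref{sandwich}, the analyticity and the complex differential equations of Theorem \ref{diff-eq}, the semigroup-type identities and translational invariance of Theorem \ref{properties}, and the insertion/Time-ordering formula of Theorem \ref{Time-order-ext}---is derived purely from the $\mathcal{C}_0$-class hypothesis on the interaction plus the structural properties of $H_0$. Reading those proofs with $H_\rm{int}$ in place of $A$ and $\F_\rm{fin}$ in place of $D_\rm{fin}$ therefore produces the stated QED versions without any modification. The only ``obstacle'' is the verification that $H_\rm{int}\in\mathcal{C}_0$, and this has already been dispatched by the preceding lemmas of Section 4, so the present theorem is essentially a bookkeeping statement recording the translation of the abstract output into the QED setting.
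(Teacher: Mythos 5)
Your proposal is correct and matches the paper's own (implicit) argument: the paper presents Theorem \ref{QEDexp} as an immediate consequence of Lemma \ref{QEDass1} (which establishes $H_\rm{int}\in\mathcal{C}_0$ with respect to $H_\rm{fr}$) combined with the abstract Theorems \ref{time-ordered-exp}--\ref{Time-order-ext}, which is precisely the reduction you carry out.
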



\subsection{Gell-Mann -- Low formula for QED}

To apply our abstract theory, we need some preliminaries. For two linear operators $ A $ and $ B $ in a Hilbert space $ \H $, we define $ \rm{ad} _A ^k (B) , \; (k= 0,1,2,...)  $ by
\begin{align}
& \rm{ad} _A ^0 (B) := B , \\
& \rm{ad} _A ^k (B) := [A, \rm{ad} _A ^{k-1} (B) ] , \q k \ge 1.
\end{align}
It is easy to see that, for all integer $ n \ge 0 $,
\begin{align}
A^n B \psi = \sum _{k=0} ^n {}_n C _k \, \rm{ad} _A ^k (B) A^{n-k} \Psi , \q \Psi \in \bigcap _{k=0} ^n D(A^k B A^{n-k}) .
\end{align}

\begin{Lem}\label{prelimLem1} Let $ n_0 \ge 0 $ be an integer and $ r \ge 0$ a real number. Let $ T_0 $ be a self-adjoint operator and $ T_1 $ a densely defined closed operator on a Hilbert space $ \H $. Suppose that there exists a subspace $ D \subset \H $ having the following properties (I)-(III):
\begin{enumerate}[(I)]
\item $ T_0 $ and $ T_1 $ leave $ D $ invariant.

\item $ D $ is a core of $ T_0 ^{n_0 +r } $.

\item For all $ n = 0,..., n_0 $, $ \rm{ad} _{T_0} ^{n} (T_1) $ is $ T_0 ^{n+r} $-bounded on $ D $, i.e., there exist constants $ C_1 , C_2 \ge 0 $ such that for all $ \Psi \in D $,
\begin{align}
\| \rm{ad} _{T_0} ^{n} (T_1) \Psi \| \le C_1 \| T_0^{n+r} \Psi \| + C_2 \| \Psi \| . 
\end{align}
\end{enumerate}
Then, for all $ n=0,...,n_0 $, $ T_1 $ maps $ D(T_0^{n+r}) $ into $ D(T_0^n) $.
\end{Lem}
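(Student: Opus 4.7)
The plan is to argue by induction on $n=0,1,\dots,n_0$, using in an essential way the commutator expansion
\begin{align}
T_0^n T_1 \Psi = \sum_{k=0}^{n} \binom{n}{k}\, \operatorname{ad}_{T_0}^{k}(T_1)\, T_0^{n-k} \Psi, \qquad \Psi\in D,
\end{align}
which is valid on $D$ by hypothesis (I) (both $T_0$ and $T_1$ preserve $D$) and is verified by a straightforward induction on $n$ using $[T_0,T_1]=\operatorname{ad}_{T_0}(T_1)$. For a given $\Psi\in D(T_0^{n+r})$, I would approximate $\Psi$ by a sequence $\{\Psi_l\}\subset D$ converging to $\Psi$ in the graph norm of $T_0^{n+r}$, then apply the expansion together with condition (III) to control the right-hand side, and conclude by closedness of $T_0^n$ that $T_1\Psi\in D(T_0^n)$.

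The preliminary step I expect to be the main obstacle is showing that $D$ is a core of $T_0^{n+r}$ for each $n=0,\dots,n_0$, not only for the top exponent $n_0+r$ as assumed in (II). My plan for this is a two-stage spectral-truncation / diagonal approximation: for $\Psi\in D(T_0^{n+r})$, set $\Psi_k:=E_{T_0}([-k,k])\Psi$ so that $\Psi_k\in D(T_0^m)$ for every $m\ge 0$ and $\Psi_k\to\Psi$ in the graph norm of $T_0^{n+r}$ (by dominated convergence applied to the spectral measure). Since $\Psi_k\in D(T_0^{n_0+r})$ and $D$ is a core of $T_0^{n_0+r}$ by (II), for each $k$ there is $\Psi_{k,l}\in D$ with $\Psi_{k,l}\to\Psi_k$ and $T_0^{n_0+r}\Psi_{k,l}\to T_0^{n_0+r}\Psi_k$; the functional-calculus estimate $\lambda^{2(n+r)}\le 1+\lambda^{2(n_0+r)}$ implies that the $T_0^{n+r}$-graph norm is dominated by the $T_0^{n_0+r}$-graph norm, hence also $T_0^{n+r}\Psi_{k,l}\to T_0^{n+r}\Psi_k$. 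Extracting a diagonal subsequence $\Psi_{k,l(k)}$ then gives the desired approximation in the graph norm of $T_0^{n+r}$.

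With this approximation in hand, I would run the induction on $n$. For the base case $n=0$, condition (III) with $k=0$ gives $\|T_1(\Psi_l-\Psi_{l'})\|\le C_1\|T_0^{r}(\Psi_l-\Psi_{l'})\|+C_2\|\Psi_l-\Psi_{l'}\|$, so $\{T_1\Psi_l\}$ is Cauchy; closedness of $T_1$ yields $\Psi\in D(T_1)$. For the inductive step, assuming the conclusion for $0,\dots,n-1$, apply the commutator expansion to $\Psi_l\in D$ and estimate each term via (III):
\begin{align}
\|\operatorname{ad}_{T_0}^{k}(T_1)\,T_0^{n-k}(\Psi_l-\Psi_{l'})\| \le C_1 \|T_0^{n+r}(\Psi_l-\Psi_{l'})\| + C_2 \|T_0^{n-k}(\Psi_l-\Psi_{l'})\|,
\end{align}
using that $T_0^{n-k}(\Psi_l-\Psi_{l'})\in D$ and that $T_0^{n-k}$-graph convergence is controlled by $T_0^{n+r}$-graph convergence. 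This shows that $\{T_0^n T_1 \Psi_l\}$ is Cauchy; combined with $T_1\Psi_l\to T_1\Psi$ from the base case and the closedness of $T_0^n$, we conclude $T_1\Psi\in D(T_0^n)$, completing the induction.
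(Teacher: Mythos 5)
Your argument is correct and follows essentially the same route as the paper's: expand $T_0^n T_1$ on $D$ via the commutator identity, use condition (III) term by term to obtain $T_0^{n+r}$-boundedness of $T_0^n T_1$ on $D$, and extend to $D(T_0^{n+r})$ by closedness of $T_0^n$. Where you go further is in explicitly verifying that $D$ is a core of $T_0^{n+r}$ for each $n\le n_0$ (the paper only assumes this for the top exponent $n_0+r$ and leaves the descent implicit); that spectral-truncation-plus-diagonal argument is exactly the right way to fill the gap, and is worth stating, though your ``induction on $n$'' is superfluous since the step for general $n$ only invokes the $n=0$ case, not the intermediate exponents.
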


\begin{proof} We prove (i). Let $ n=0,...,n_0 $ be fixed arbitrarily. By the condition (I), for all $ \Psi \in D $,
\begin{align}
T_0 ^n T_1 \Psi = \sum _{k=0} ^n {}_n C_k \, \rm{ad} _{T_0} ^k (T_1) T_0 ^{n-k} \Psi .
\end{align}
By the condition (III), each of $ \rm{ad} _{T_0} ^k (T_1) T_0^{n-k}  \; (k=0,...,n) $ is $ T_0^{n+r} $-bounded on $ D $, and thus, so is $ T_0 ^n T_1 $. Hence, there exist constants $ C_1 , C_2 \ge 0 $ such that
\begin{align}
\| T_0 ^n T_1 \Psi \| \le C_1 \| T_0 ^{n+r} \Psi \| + C_2 \| \Psi \| , \q \Psi \in D .
\end{align}
Let us note that $ T_1 $ is $ T_0^r $-bounded on $ D $ from the condition (III). Using the condition (II) and the closedness of $ T_0^n $, we see that the above $ \Psi $ can be extended onto $ D(T_0^{n+r}) $. Thus, the assertion follows.
\end{proof}

\begin{Lem}\label{prelimLem1'} Let $ n_0 \ge 0 $ be an integer. Let $ T_0 $ be a self-adjoint operator and $ T_1 $ a closed symmetric operator on a Hilbert space $ \H $. Suppose that there exists a subspace $ D \subset \H $ having the following properties (I)-(III).
\begin{enumerate}[(I)]
\item $ T_0 $ and $ T_1 $ leave $ D $ invariant.

\item $ D $ is a core of $ T_0 ^{n_0 +1} $.

\item For all $ n = 0,..., n_0 $, $ \rm{ad} _{T_0} ^{n} (T_1) $ is infinitesimally $ T_0 ^{n+1} $-bounded on $ D $, i.e., for all $ \e >0 $, there exists a constant $ C_\e \ge 0 $ such that for all $ \Psi \in D $,
\begin{align}
\| \rm{ad} _{T_0} ^{n} (T_1) \Psi \| \le \e \| T_0^{n+1} \Psi \| + C_\e \| \Psi \| . 
\end{align}
\end{enumerate}
Then, $ T:=T_0+T_1 $ is self-adjoint. Furthermore, for all $ n =1,..., n_0+1 $, $ T^n - T_0 ^n $ is infinitesimally $ T_0 ^n $-bounded, and it follows that 
\begin{align}\label{domain1}
D(T^n) = D(T_0 ^n) .
\end{align}
\end{Lem}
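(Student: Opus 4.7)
The base case $n=1$ is a direct application of the Kato--Rellich theorem. Condition (III) with $n=0$ gives that $T_1$ is infinitesimally $T_0$-bounded on $D$, and $D$ is a core not only of $T_0^{n_0+1}$ but of every $T_0^k$ with $1 \leq k \leq n_0+1$: given $\Psi \in D(T_0^k)$ one approximates first by $E_{T_0}([-L,L])\Psi \in D(T_0^{n_0+1})$ using bounded spectral projections, then by elements of $D$ in the $T_0^{n_0+1}$-graph norm, converting convergence to the $T_0^k$-graph norm via the moment inequality $\|T_0^j \phi\| \leq \epsilon \|T_0^{n_0+1}\phi\| + C_\epsilon \|\phi\|$ ($j \leq n_0+1$). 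Closedness of $T_1$ then extends the infinitesimal bound to all of $D(T_0)$, and Kato--Rellich yields self-adjointness of $T = T_0 + T_1$ with $D(T) = D(T_0)$, so the conclusion of the lemma at level $n=1$ reduces to hypothesis (III) with $n=0$.

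For the inductive step, assume the conclusion for all powers up to $n$, where $n \leq n_0$. On the $T_0, T_1$-invariant subspace $D$, expand
\begin{align*}
T^{n+1} - T_0^{n+1} = (T_0 + T_1)^{n+1} - T_0^{n+1}
\end{align*}
as a sum over length-$(n+1)$ words in $\{T_0, T_1\}$ containing at least one $T_1$, and normal-order each such word, via iterated use of $T_0 T_1 = T_1 T_0 + \mathrm{ad}_{T_0}(T_1)$, into a linear combination of monomials
\begin{align*}
S_{j_1} S_{j_2} \cdots S_{j_m} T_0^{l}, \qquad S_j := \mathrm{ad}_{T_0}^j(T_1),
\end{align*}
with $m \geq 1$ and total weight $\sum_i (j_i+1) + l = n+1$; in particular every $j_i \leq n_0$, so condition (III) applies to each factor. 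One then bounds each such monomial on $\Psi \in D$ by applying (III) from right to left, pushing any additional $T_0$'s through $S_j$'s by further commutators (which only raise the index to $S_{j+1}, S_{j+2}, \dots$, still $\leq n_0$ by the weight constraint), and reducing every intermediate power $T_0^a$ with $a < n+1$ via the moment inequality. The result is
\begin{align*}
\|T^{n+1}\Psi - T_0^{n+1}\Psi\| \leq \epsilon \|T_0^{n+1}\Psi\| + C_\epsilon \|\Psi\|, \qquad \Psi \in D,
\end{align*}
and closedness of $T^{n+1}$ (inherited from self-adjointness of $T$) together with the core property of $D$ in $T_0^{n+1}$ extends this bound to $D(T_0^{n+1})$, establishing $D(T_0^{n+1}) \subset D(T^{n+1})$ and the infinitesimal bound.

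The reverse inclusion $D(T^{n+1}) \subset D(T_0^{n+1})$ rests on the structural identity $\mathrm{ad}_T^j(T_1) = \mathrm{ad}_{T_0}^j(T_1)$ for all $j$, proved by induction from $\mathrm{ad}_{T_0 + T_1}(T_1) = \mathrm{ad}_{T_0}(T_1)$. Conditions (I), (III) are therefore preserved when $T_0$ is replaced by $T$ and $T_1$ by $-T_1$, and (II) is preserved after combining the inductive hypothesis (giving $D(T^k) = D(T_0^k)$ with equivalent graph norms for $k \leq n$) with the forward bound just established at level $n+1$. Repeating the normal-ordered expansion with $T_0 = T - T_1$ then yields $\|T_0^{n+1}\Psi - T^{n+1}\Psi\| \leq \epsilon \|T^{n+1}\Psi\| + C_\epsilon \|\Psi\|$, and a final closedness argument completes $D(T^{n+1}) = D(T_0^{n+1})$. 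The main obstacle is the combinatorial bookkeeping of the normal-ordering in the inductive step --- verifying that every monomial produced has indices $j_i \leq n_0$, tracking the weight through the subsequent $T_0$-pushes, and arranging the interpolations so that the $\epsilon$-factors in (III) compound into a final arbitrarily small coefficient of $\|T_0^{n+1}\Psi\|$; the clean identity $\mathrm{ad}_T = \mathrm{ad}_{T_0}$ on $T_1$ is what renders the forward and reverse inclusions structurally parallel.
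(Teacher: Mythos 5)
Your forward argument is a genuine alternative to the paper's, but your reverse inclusion has a circularity that the paper avoids with a key structural fact about self-adjointness.

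The paper's inductive step is much leaner than your normal-ordering: they use the one-step telescoping identity
\begin{align*}
T^{n+1}\Psi - T_0^{n+1}\Psi = T_0^n T_1\Psi + (T^n - T_0^n)T\Psi,\qquad \Psi\in D,
\end{align*}
where the second term is controlled by the inductive hypothesis applied to $T\Psi$, and the first term $T_0^n T_1$ is normal-ordered once (as in Lemma \ref{prelimLem1}) with all indices $\le n \le n_0$. Your full expansion of $(T_0+T_1)^{n+1}-T_0^{n+1}$ into weighted monomials $S_{j_1}\cdots S_{j_m}T_0^l$ is also correct --- the weight constraint $\sum(j_i+1)+l = n+1 \le n_0+1$ does keep every index $\le n_0$ through the recursion --- but the combinatorial bookkeeping you acknowledge as ``the main obstacle'' is exactly what the telescoping identity avoids. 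That is purely a matter of cleanliness, not correctness.

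The substantive gap is in your reverse inclusion. You want to extend the bound $\|T_0^{n+1}\Psi - T^{n+1}\Psi\| \le \epsilon\|T^{n+1}\Psi\| + C_\epsilon\|\Psi\|$ from $D$ to $D(T^{n+1})$, which requires that $D$ be a core of $T^{n+1}$. You claim ``(II) is preserved'' by combining the inductive hypothesis with the forward bound at level $n+1$, but the forward bound gives graph-norm equivalence of $T^{n+1}$ and $T_0^{n+1}$ \emph{on} $D(T_0^{n+1})$, hence only that $D$ is $T^{n+1}$-graph-dense in $D(T_0^{n+1})$. Whether $D(T_0^{n+1})$ is $T^{n+1}$-graph-dense in $D(T^{n+1})$ is equivalent to $D(T^{n+1})=D(T_0^{n+1})$ --- exactly what is being proven at level $n+1$, so the argument is circular. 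The same circularity hides in converting condition (III) from $T_0$-powers to $T$-powers for the top index $j=n$. The paper closes the domain equality differently and does not need the reverse estimate at all: once the forward bound is in hand, Kato--Rellich makes $T_0^{n+1}+(T^{n+1}-T_0^{n+1})$ self-adjoint on $D(T_0^{n+1})$; this operator is the restriction of the self-adjoint $T^{n+1}$ to $D(T_0^{n+1})$, and a self-adjoint operator admits no proper symmetric extension, forcing $D(T^{n+1})=D(T_0^{n+1})$. That maximality observation is the missing ingredient in your write-up; your clean identity $\mathrm{ad}_T=\mathrm{ad}_{T_0}$ on $T_1$ is correct but does not substitute for it.
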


\begin{proof} From the conditions (II) and (III) for $ n=1 $, $ T_1 $ is infinitesimally $ T_0 $-bounded. Hence, it follows from the Kato-Rellich theorem that $ T $ is self-adjoint and 
\begin{align}
D(T) = D(T_0).
\end{align}

By Lemma \ref{prelimLem1}, for all $ n= 1,..., n_0 $, $ T_1 $ maps $ D(T_0 ^{n+1}) $ into $ D(T_0 ^n) $. Hence, we have 
\begin{align}\label{domain10}
D(T^{n}) \supset D(T_0 ^{n}) , \q n=1, ..., n_0 +1 .
\end{align}

We prove the remaining claim by induction. The case $ n=1 $ has already been proved. Suppose that the claim is true for some $ n < n_0+1 $. By the condition (I), we have
\begin{align}\label{domain2}
T^{n+1} \Psi - T_0 ^{n+1} \Psi = T_0 ^n T_1 \Psi + (T^n - T_0 ^n) T \Psi , \q \Psi \in \D . 
\end{align}
From the induction hypothesis, for all $ \e >0 $,
\begin{align}\label{domain4}
\| ( T^n- T_0 ^n ) T \Psi \| \le \e \| T_0 ^n T \Psi \| + C_\e \| T \Psi \| , \q \Psi \in D(T_0 ^{n+1}) ,
\end{align}
where $ C_\e >0 $ is a constant depending on $ \e $ and $ n $. In the same manner as in the proof of Lemma \ref{prelimLem1}, one can see that $ T_0 ^n T_1 $ is infinitesimally $ T_0 ^{n+1} $-bounded on $ D $. Combining this with \eqref{domain10}, \eqref{domain2}, \eqref{domain4} and the condition (II), we see that $ T^{n+1} - T_0 ^{n+1} $ is infinitesimally $ T_0 ^{n+1} $-bounded. Hence, it follows from the Kato-Rellich theorem that $ T_0 ^{n+1} + (T^{n+1} - T_0 ^{n+1}) $ is self-adjoint on $ D(T_0 ^{n+1}) $. On the other hand, by the definition of the sum operator, we have the inclusion relation $ T^{n+1} \supset T_0 ^{n+1} + (T^{n+1} - T_0 ^{n+1}) $. Since both sides are self-adjoint, we obtain the operator equality
\begin{align}
T^{n+1} = T_0 ^{n+1} + (T^{n+1} - T_0 ^{n+1}),
\end{align}
which implies \eqref{domain1} for $ n+1 $. Thus the induction step is complete, and the assertion follows. 
\end{proof}

\begin{Lem}\label{prelimLem2} Assume Hypotheses \ref{B-Hypo} and \ref{UV-Hypo}. Then, the following (i)-(iii) hold:
\begin{enumerate}[(i)]
\item $ H_\rm{fr} $ and $ H_\rm{int} $ leave $ \F _\rm{fin} $ invariant.

\item For each $ n \in \N $, $ \F _\rm{fin} $ is a core of $ H_\rm{fr} ^n $.

\item For all $ n \in \N $, $ \rm{ad} _{H_\rm{fr}} ^n (H_\rm{int}) $ is infinitesimally $ H_\rm{fr} $-bounded on $ \F _\rm{fin} $, i.e., for all $ \e >0 $, there exists a constant $ C_\e \ge 0 $ such that for all $ \Psi \in \F _\rm{fin} $,
\begin{align}
\| \rm{ad} _{H_\rm{fr}} ^n (H_\rm{int} ) \Psi \| \le \e \| H_\rm{fr} \Psi \| + C_\e \| \Psi \| . 
\end{align}
\end{enumerate}
\end{Lem}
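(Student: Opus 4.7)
The plan is to verify (i)--(iii) in order, with (iii) being the substantive step.

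For (i), the self-adjointness of $H_\rm{fr}$ implies it commutes with each spectral projection $E_{H_\rm{fr}}([0,E])$, so $H_\rm{fr}\F_E\subset \F_E$ and hence $H_\rm{fr}\F_\fin\subset\F_\fin$. For $H_\rm{int}=\overline{H_\rm{I}}+H_\rm{II}$, Lemma \ref{QEDass1} (i)--(ii) directly yields $H_\rm{int}\F_E\subset\F_{E+c}$ with $c=\max\{2\sqrt{\La_\rm{el}^2+M^2}+\La_\rm{ph},\,4\sqrt{\La_\rm{el}^2+M^2}\}$, giving the desired invariance. For (ii), I would apply the standard spectral-truncation argument: for $\Psi\in D(H_\rm{fr}^n)$, the cutoffs $\Psi_L:=E_{H_\rm{fr}}([0,L])\Psi\in\F_L\subset\F_\fin$ satisfy $\Psi_L\to\Psi$ and $H_\rm{fr}^n\Psi_L\to H_\rm{fr}^n\Psi$ as $L\to\infty$ by dominated convergence in the spectral measure, proving $\F_\fin$ is a core of $H_\rm{fr}^n$.

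For (iii), the approach is to prove by induction on $n$ the stronger statement that $\rm{ad}_{H_\rm{fr}}^n(H_\rm{int})$ is $H_\rm{fr}^{1/2}$-bounded on $\F_\fin$; this implies infinitesimal $H_\rm{fr}$-boundedness via the elementary interpolation $\|H_\rm{fr}^{1/2}\Psi\|\le\e\|H_\rm{fr}\Psi\|+(4\e)^{-1}\|\Psi\|$ obtained from $\|H_\rm{fr}^{1/2}\Psi\|^2=\langle\Psi,H_\rm{fr}\Psi\rangle$ and Young's inequality. The base case $n=0$ is Lemma \ref{s.a.Lem} (iii). For the inductive step, writing $H_\rm{I}$ and $H_\rm{II}$ in terms of boson/fermion creation and annihilation operators and invoking the commutation relations of Lemmas \ref{boson-comm} and \ref{fermion-comm}, one computes that $[H_\rm{fr},H_\rm{I}]$ and $[H_\rm{fr},H_\rm{II}]$ have exactly the same functional form as $H_\rm{I}$ and $H_\rm{II}$, respectively, except that each test function $\hat\chi_\rm{ph}^\xx e_j^{(r)}/\sqrt{2\omega}$ or $\hat\chi_\rm{el}^\xx(u_s^l)^*$ appearing in a creation/annihilation operator acquires an extra multiplicative factor of $\omega$ or $\pm E_M$, respectively ($H_\rm{II}$ contributes only fermion commutators since it contains no photon operators). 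Iterating $n$ times, $\rm{ad}_{H_\rm{fr}}^n(H_\rm{int})$ becomes a finite sum of operators of the original form with modified test functions of the shape $\omega^{k_1}\hat\chi_\rm{ph}^\xx e_j^{(r)}/\sqrt{2\omega}$ and $E_M^{k_2}\hat\chi_\rm{el}^\xx(u_s^l)^*$, with $k_1+k_2\le n$ in each summand.

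The crucial point is that under Hypothesis \ref{UV-Hypo} we have $\omega\le\La_\rm{ph}$ on $\supp\hat\chi_\rm{ph}$ and $E_M\le\sqrt{\La_\rm{el}^2+M^2}$ on $\supp\hat\chi_\rm{el}$, so every modified test function still belongs to $D(\omega^{-1/2})\cap D(\omega)$ or $D(E_M)$ respectively, with norms bounded by the original norms times powers of the cutoffs. Consequently the proofs of Lemmas \ref{gauge-bdd} and \ref{Dirac-bdd} apply verbatim, yielding the $H_\rm{fr}^{1/2}$-bound for each summand with constants depending only on $n$, the cutoffs $\La_\rm{el},\La_\rm{ph}$, the mass $M$, and the cutoff functions $\chi_\rm{sp},\chi_\rm{el},\chi_\rm{ph}$. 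The main obstacle is purely combinatorial: keeping track of the many terms produced by iterated commutators of the four-factor operator $\Normal{j^0j^0}$ and of the product $\Normal{j^i}\otimes A_i$. However, because the UV cutoffs bound all one-particle energies uniformly on the supports, no convergence issue arises and the induction closes cleanly.
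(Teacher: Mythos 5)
Your proposal is correct and follows essentially the same route as the paper's proof: for (iii) both compute $\rm{ad}_{H_\rm{fr}}^n(H_\rm{int})$ via the commutation relations of Lemmas \ref{boson-comm} and \ref{fermion-comm}, observe that the result is a sum of operators of the same structural form with test functions multiplied by powers of $\omega$ and $E_M$, and then use Hypothesis \ref{UV-Hypo} to bound these modified test functions so that the original estimates apply. The only cosmetic difference is that the paper writes out the iterated commutator $H_\rm{I}^{(n)}+H_\rm{II}^{(n)}$ in closed form rather than arguing by induction, and justifies the commutator algebra on $\F_\rm{fin}$ by replacing $H_\rm{fr}$ with the bounded truncation $H_\rm{fr}E_{H_\rm{fr}}([0,E])$, a small technical point your sketch leaves implicit.
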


\begin{proof}
\begin{enumerate}[(i)]
\item It is obvious that $ H_\rm{fr} $ leaves $ \F _\rm{fin} $ invariant from the definition of $ \F _\rm{fin} $ \eqref{Ffin}. The remaining claim follows from Lemma \ref{QEDass1} (i) and (ii).

\item This follows from the general theory of the functional calculus.

\item For each integer $ n \ge 0 $, we define linear operators $ H_\rm{I} ^{(n)} $ and $ H_\rm{II} ^{(n)} $ by
\begin{align}
& D(H_\rm{I} ^{(n)}) := D((I \otimes H_\rm{ph}) ^{1/2}) , \no \\ 
& H_\rm{I} ^{(n)} \Psi := e \int _{\R ^3} d\xx \,  H_\rm{I} ^{(n)} (\xx ) \Psi , \q \Psi \in D(H_\rm{I} ^{(n)}) \label{In} , \\
& D(H_\rm{II} ^{(n)}) := \F _\rm{tot} , \no \\ 
& H_\rm{II} ^{(n)} := e \int _{\R ^3 \times \R ^3} d\xx d\yy  \,  H_\rm{II} ^{(n)} (\xx , \yy ) , \label{IIn}
\end{align}
with
\begin{align}
& H_\rm{I} ^{(n)} (\xx ) := \chisp (\xx ) \sum _{i=1} ^3 \sum _{\substack{ n_1 + n_2 =n , \\ n_1 , n_2 \ge 0 }} \frac{n!}{n_1 ! n_2!} \Normal{ j^i {}^{(n_1 )} (0, \xx ) } \otimes A_i^{(n_3)} (0, \xx ) , \\
& H_\rm{II} ^{(n)} (\xx , \yy ) := \chisp (\xx ) \chisp (\yy ) V_C (\xx - \yy ) \sum _{\substack{ n_1 + n_2 =n , \\ n_1 , n_2 \ge 0 }} \frac{n!}{n_1 ! n_2!}  \Normal{ j^0 {}^{(n_1)} (0, \xx ) j^0 {}^{(n_2)} (0, \yy )} \otimes I , \\
& j^\mu {}^{(n)} (0, \xx ) := \sum _{\substack{ n_1 + n_2 =n , \\ n_1 , n_2 \ge 0 }} \frac{n!}{n_1 ! n_2!} \sum _{l,l' =1} ^4 \psi _l ^{(n_1)} (0, \xx ) ^* \alpha _{ll' } ^\mu \psi _{l'} ^{(n_2)} (0, \xx ) , \\ 
& \psi _l ^{(n)} (0, \xx ) := \sum _{s=\pm 1/2} \Big( b_s \big( (iE_M) ^n \hat{\chiel ^\xx } \, (u_s ^l) ^* \big) + d_s ^* \big( (iE_M) ^n \hat{\chiel ^\xx } \; \widetilde{v} _s ^l \big) \Big) , \\
& A_i ^{(n)} (0, \xx ) := \sum _{r=1,2} \Big( a^{(r)} \Big( \frac{ (i\omega )^n \hat{\chiph ^\xx } e^{(r)}_i }{\sqrt{2\omega }} \Big) + a^{(r)} {}^* \Big( \frac{ (i\omega )^n \hat{\chiph ^\xx } e^{(r)}_i }{\sqrt{2\omega }} \Big) \Big) ,
\end{align}
where the integral in \eqref{In} is taken in the sense of the strong Bochner integral, and the integral in \eqref{IIn} is the Bochner integral with respect to the operator norm. Then, $ H_\rm{I} = H_\rm{I} ^{(0)} , \; H_\rm{II} = H_\rm{II} ^{(0)} $. In the same way as Lemma \ref{s.a.Lem}, one can show that each $ \overline{ H_\rm{I} ^{(n)} } $ is infinitesimally $ H_\rm{fr} $-bounded, and each $ H_\rm{II} ^{(n)} $ is bounded. 

To prove the claim, it is sufficient to show that
\begin{align}\label{adI1}
\rm{ad} _{iH_\rm{fr}} ^n (H_\rm{int}) \Psi = (H_\rm{I} ^{(n)} + H_\rm{II} ^{(n)} ) \Psi , \q \Psi \in \F _\rm{fin} .
\end{align}
The left-hand side can be rewritten as $ \rm{ad} _{i H_\rm{fr} E_{H_\rm{fr}} ([0,E]) } ^n (H_\rm{int}) \Psi $ for sufficiently large $ E \ge 0 $; $ H_\rm{fr} E_{H_\rm{fr}} ([0,E]) $ is bounded. Hence, we have
\begin{align}
& \rm{ad} _{iH_\rm{fr} } ^n (H_\rm{int}) \Psi  \no \\
& = e \int _{\R ^3} d\xx \, \rm{ad} _{iH_\rm{fr} } ^n ( H_\rm{I} ^{(0)} (\xx )) \Psi + e \int _{\R ^3 \times \R ^3} d\xx d\yy  \, \rm{ad} _{iH_\rm{fr} } ^n ( H_\rm{II} ^{(0)} (\xx , \yy ) ) \Psi .
\end{align}
Using Lemmas \ref{boson-comm} and \ref{fermion-comm}, we have
\begin{align}
\rm{ad} _{iH_\rm{fr} } ^n ( H_\rm{I} ^{(0)} (\xx )) \Psi & = H_\rm{I} ^{(n)} (\xx ) \Psi , \\
\rm{ad} _{iH_\rm{fr} } ^n ( H_\rm{II} ^{(0)} (\xx , \yy )) \Psi & = H_\rm{II} ^{(n)} (\xx , \yy ) \Psi .
\end{align}
Therefore, we obtain \eqref{adI1}, and the assertion follows. 
\end{enumerate}
\end{proof}

\begin{Lem}\label{preD1} Under Hypotheses \ref{B-Hypo} and \ref{UV-Hypo}, it follows that
\begin{align}
D(H_\rm{tot} ^n) = D(H_\rm{fr} ^n) .
\end{align}
for all $ n \in \N $.
\end{Lem}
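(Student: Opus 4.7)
The plan is to apply Lemma \ref{prelimLem1'} with the identifications $T_0 = H_\rm{fr}$, $T_1 = H_\rm{int}$, and common invariant subspace $D = \F _\rm{fin}$. Given any target $n \in \N$, I would take $n_0$ to be any integer with $n_0 \ge n - 1$; the conclusion $D(T^{n_0 + 1}) = D(T_0^{n_0 + 1})$ of that lemma then immediately yields the desired identity $D(H_\rm{tot}^n) = D(H_\rm{fr}^n)$.

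Next I verify the hypotheses of Lemma \ref{prelimLem1'}. That $H_\rm{int}$ is closed and symmetric is already supplied by Lemma \ref{s.a.Lem} (iii). Condition (I) (the invariance of $\F _\rm{fin}$ under both $H_\rm{fr}$ and $H_\rm{int}$) is precisely Lemma \ref{prelimLem2} (i), and condition (II) ($\F _\rm{fin}$ being a core of $H_\rm{fr}^{n_0 + 1}$) is precisely Lemma \ref{prelimLem2} (ii). The only genuine subtlety is condition (III), which demands infinitesimal $H_\rm{fr}^{k+1}$-boundedness of $\rm{ad} _{H_\rm{fr}}^{k}(H_\rm{int})$ on $\F _\rm{fin}$ for $k = 0, \ldots, n_0$, whereas Lemma \ref{prelimLem2} (iii) delivers (the superficially stronger) infinitesimal $H_\rm{fr}$-boundedness.

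The main obstacle, in truth a very minor one, is to convert $H_\rm{fr}$-boundedness into $H_\rm{fr}^{k+1}$-boundedness. I plan to use the spectral moment inequality
\begin{align}
\| H_\rm{fr} \Psi \| \le \| H_\rm{fr}^{k+1} \Psi \| ^{1/(k+1)} \| \Psi \| ^{k/(k+1)} , \q \Psi \in D(H_\rm{fr}^{k+1}) , \no
\end{align}
which follows by functional calculus from the non-negativity and self-adjointness of $H_\rm{fr}$, combined with Young's inequality with conjugate exponents $k+1$ and $(k+1)/k$. Young's inequality absorbs the $\| H_\rm{fr}^{k+1} \Psi \|^{1/(k+1)}$ factor into a term with arbitrarily small prefactor in front of $\| H_\rm{fr}^{k+1} \Psi \|$, at the cost of a large constant in front of $\| \Psi \|$, producing exactly the infinitesimal $H_\rm{fr}^{k+1}$-boundedness required by condition (III). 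All QED-specific analytical work, namely the commutator computations using the canonical (anti-)commutation relations and the UV cutoffs of Hypothesis \ref{UV-Hypo}, has already been carried out in Lemma \ref{prelimLem2}, so the present lemma functions as a clean packaging corollary.
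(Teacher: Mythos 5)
Your proposal is correct and takes essentially the same route as the paper: the paper's proof reads ``apply Lemma \ref{prelimLem1}'' but evidently intends Lemma \ref{prelimLem1'} (whose conclusion is precisely $D(T^n)=D(T_0^n)$), with $T_0=H_\rm{fr}$, $T_1=H_\rm{int}$, $D=\F_\fin$, and Lemma \ref{prelimLem2} supplying the hypotheses. You also correctly spell out the small interpolation step --- spectral moment inequality plus Young's inequality --- needed to pass from the infinitesimal $H_\rm{fr}$-boundedness delivered by Lemma \ref{prelimLem2} (iii) to the infinitesimal $H_\rm{fr}^{k+1}$-boundedness required in condition (III) of Lemma \ref{prelimLem1'}, a point the paper leaves implicit.
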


\begin{proof} By Lemma \ref{prelimLem2}, we can apply Lemma \ref{prelimLem1} to the case $ T_0 = H_\rm{fr} $, $ T_1 =H_\rm{int} $ and $ D = \F _\rm{fin} $, and thus the assertion follows.
\end{proof}

\begin{Lem}\label{preD2} Under Hypotheses \ref{B-Hypo}, the following (i) and (ii) hold.
\begin{enumerate}[(i)]
\item For each integer $ n \ge 0 $, $ I \otimes A_j (0,\xx ) $ maps $ D(H_\rm{tot} ^{n+1}) $ into $ D(H_\rm{tot} ^{n}) $.

\item For each integer $ n \ge 0 $, $ \psi _l (0,\xx ) \otimes I $ and $ \psi _l (0,\xx ) ^* \otimes I $ leave $ D(H_\rm{tot} ^{n}) $ invariant.
\end{enumerate}
\end{Lem}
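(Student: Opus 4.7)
The plan is to apply Lemma \ref{prelimLem1} with $T_0 = H_\rm{tot}$ and $D = \F_\rm{fin}$, taking $r=1$ with $T_1 = I \otimes A_j(0,\xx)$ for part (i), and $r=0$ with $T_1 = \psi_l(0,\xx) \otimes I$ or $T_1 = \psi_l(0,\xx)^* \otimes I$ for part (ii). Hypotheses (I) and (II) of that lemma are relatively straightforward: $H_\rm{tot} = H_\rm{fr} + H_\rm{int}$ preserves $\F_\rm{fin}$ because $H_\rm{fr}$ preserves each $\F_E$ via the functional calculus and $H_\rm{int}$ preserves $\F_\rm{fin}$ by Lemma \ref{prelimLem2} (i), while the three field operators each map $\F_E$ into $\F_{E+c}$ for a cutoff-dependent constant $c$ by combining Lemma \ref{field-spec} with Lemma \ref{tensor-spec}.

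To establish hypothesis (II), I would invoke Lemma \ref{preD1}, which gives $D(H_\rm{tot}^n) = D(H_\rm{fr}^n)$ for every $n$. An inspection of the inductive argument in the proof of Lemma \ref{prelimLem1'} further shows that $H_\rm{tot}^n - H_\rm{fr}^n$ is infinitesimally $H_\rm{fr}^n$-bounded on this common domain, so the graph norms of $H_\rm{tot}^n$ and $H_\rm{fr}^n$ are equivalent. Since $\F_\rm{fin}$ is already a core of $H_\rm{fr}^n$ by Lemma \ref{prelimLem2} (ii), it is therefore also a core of $H_\rm{tot}^n$.

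The substantive step is hypothesis (III): I must show that $\rm{ad}_{H_\rm{tot}}^n(\psi_l(0,\xx)^{(*)} \otimes I)$ is $H_\rm{tot}^n$-bounded on $\F_\rm{fin}$ and that $\rm{ad}_{H_\rm{tot}}^n(I \otimes A_j(0,\xx))$ is $H_\rm{tot}^{n+1}$-bounded on $\F_\rm{fin}$. I would proceed by induction on $n$, decomposing $H_\rm{tot} = H_\rm{fr} + H_\rm{int}$ and evaluating each commutator through the canonical (anti-)commutation relations in Lemmas \ref{boson-comm} and \ref{fermion-comm}, in direct parallel with the derivation of $\rm{ad}_{iH_\rm{fr}}^n(H_\rm{int})$ carried out in the proof of Lemma \ref{prelimLem2} (iii). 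The key structural observation, enabled by the UV cutoff Hypothesis \ref{UV-Hypo}, is that on $\F_\rm{fin}$ every iterated commutator remains a finite sum of terms of the form \emph{bounded fermionic factor} (assembled from $\psi_l, \psi_l^*$, and $j^\mu$, and bounded via Lemma \ref{Dirac-bdd}) tensored with a \emph{polynomial of controlled degree in the bosonic creation/annihilation operators}, all of whose smearing functions lie in the photon cutoff region; such bosonic polynomials are controlled by half-integer powers of $H_\rm{ph}$ through iterated use of Lemma \ref{boson-bdd}, and hence by an integer power of $H_\rm{tot}$ via the graph-norm equivalence established in the previous paragraph.

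The main obstacle is the combinatorial bookkeeping of this induction: one must write each $\rm{ad}_{H_\rm{tot}}^n(T_1)$ as an explicit Wick-ordered sum of smeared field monomials, in the spirit of the $H_\rm{I}^{(n)}, H_\rm{II}^{(n)}$ expressions in the proof of Lemma \ref{prelimLem2} (iii), and verify that the maximal bosonic degree produced after $n$ nested commutators is controlled by $n$ when $T_1 = \psi_l \otimes I$ or $\psi_l^* \otimes I$ (so that $r=0$ suffices) and by $n+1$ when $T_1 = I \otimes A_j$ (so that $r=1$ suffices). This is tedious but routine: the argument uses only the canonical (anti-)commutation relations and the compact support of the cutoff functions, and introduces no new analytical ingredient beyond the lemmas already proved.
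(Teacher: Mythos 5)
Your proposal takes a genuinely different route from the paper's. You apply Lemma~\ref{prelimLem1} with $T_0=H_\rm{tot}$, which forces you to estimate the iterated commutators $\rm{ad}_{H_\rm{tot}}^n(I\otimes A_j)$ and $\rm{ad}_{H_\rm{tot}}^n(\psi_l\otimes I)$. Since $H_\rm{int}$ is cubic (the $j\cdot A$ term) and quartic (the Coulomb term) in the fields, each application of $\rm{ad}_{H_\rm{int}}$ produces Wick monomials of strictly higher order, and you must track both the fermionic and bosonic degree growth across the induction. You correctly identify this as the heavy step and argue it is ``tedious but routine''; the degree-counting you outline ($n+1$ in bosons for $A_j$ with $r=1$; $n$ in bosons for $\psi_l$ with $r=0$) is plausible, but it amounts to reproving a substantially harder version of the computation in Lemma~\ref{prelimLem2}~(iii) for every choice of field insertion.

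The paper instead applies Lemma~\ref{prelimLem1} with $T_0=H_\rm{fr}$ (the \emph{free} Hamiltonian). With this choice, $\rm{ad}_{H_\rm{fr}}^n$ acting on a field operator simply multiplies the one-particle smearing function by $(i\omega)^n$ or $(iE_M)^n$, which under the ultraviolet cutoff stays in the same weighted $L^2$ space; the iterated commutator is therefore again a single creation/annihilation-type operator of the original form, immediately $H_\rm{fr}^{1/2}$-bounded (for $A_j$) or bounded (for $\psi_l$). This makes hypothesis~(III) of Lemma~\ref{prelimLem1} trivial and yields that $A_j$ maps $D(H_\rm{fr}^{n+1/2})$ into $D(H_\rm{fr}^n)$ and that $\psi_l^{(*)}$ leaves $D(H_\rm{fr}^n)$ invariant. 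The paper then converts these to statements about $H_\rm{tot}$ via the domain identity $D(H_\rm{tot}^n)=D(H_\rm{fr}^n)$ from Lemma~\ref{preD1}, which you use only auxiliarily for hypothesis~(II). In short: the paper does the commutator calculus with the \emph{free} Hamiltonian (where it is essentially diagonal) and transfers the result, whereas you do it with the \emph{interacting} Hamiltonian and incur the full combinatorics. Both yield the lemma, but your route needs the explicit Wick-ordered expansion you defer as bookkeeping, and this nontrivial work is precisely what the paper's choice of $T_0=H_\rm{fr}$ is designed to bypass.
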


\begin{proof}
\begin{enumerate}[(i)]
\item Applying Lemma \ref{prelimLem1} to the case $ T_0 = H_\rm{fr} $, $ T_1 = I \otimes A_j (0,\xx ) $ and $ D = \F _\rm{fin} $, we see that $ I \otimes A_j (0,\xx ) $ maps $ D(H_\rm{fr} ^{n+1/2}) $ into $ D(H_\rm{fr} ^{n}) $. Combining this with Lemma \ref{preD1}, the assertion follows.

\item Similar to the proof of (i).
\end{enumerate}
\end{proof}

Now we are ready to prove the Gell-Mann -- Low formula. We assume the following:

\begin{Hypo}\label{GS-Hypo} (I) $ H_\rm{tot} $ has a unique ground state $ \Omega $ $ (\| \Omega \| =1 ) $.

(II) $ \Expect{ \Omega _\rm{tot} , \Omega _0 } \neq 0 $, where $ \Omega _0 := \Omega _\ff \otimes \Omega _\bb , \; \Omega _\ff := \{ 1,0,0,... \} \in \F _\ff (\H _\rm{el}) $, and $ \Omega _\bb := \{ 1,0,0,... \} \in \F _\bb (\H _\rm{ph}) $.
\end{Hypo}

For conditions for Hypothesis \ref{GS-Hypo} to hold, see \cite{MR2541206}. Because of some technical problems, the coupling constant $ e $ is currently restricted to a sufficiently small region in order to prove the existence of the ground state. 


Let $ \phi ^{(k)} (0,\xx ) \; (k=1,..., m , \; m \ge 1 , \; \xx \in \R ^3 ) $ denote the point-like field operators, that is, for each $ k $, $ \phi ^{(k)} (0,\xx ) $ denotes $ I \otimes A_j (0, \xx  ) $, $ \psi _l (0,\xx ) \otimes I $, or $ \psi _l (0,\xx ) ^* \otimes I $. For each $ z\in \C $, we set
\begin{align}
\phi _\rm{int} ^{(k)} (z , \xx ) := e^{iz H_\rm{fr}} \phi ^{(k)} (0, \xx ) e^{-iz H_\rm{fr}} . 
\end{align}

\begin{Thm}\label{GML-QED} Assume Hypotheses \ref{B-Hypo}-\ref{GS-Hypo}. Let $ z_1 ,..., z_m \in \C$ with
$\im z_1\le \dots \le \im z_m$, and $ \xx _1 , ... , \xx _m \in \R ^3 $. Choose a simple curve $\Gamma_T^\e$ from $-T(1-i\e)$ to $T(1-i\e)$ ($T,\e>0$) on which
$z_1\succ \dots \succ z_m$. Then, $m$-point Green's function 
\begin{align}
G_m(z_1,\dots,z_m) := e^{i(z_1-z_m)E_0(H_\rm{tot})} \Expect{\Omega, \phi ^{(1)} (0, \xx _1 ) e^{ -i (z_1-z_2) H_\rm{tot} }  \dots \phi ^{(m-1)} (0, \xx _{m-1} ) e^{ -i (z_{m-1}-z_m) H_\rm{tot} } \phi ^{(m)} (0, \xx _m ) \Omega} ,
\end{align}
is well-defined and satisfies the formula
\begin{align}
G_m(z_1,\dots,z_m)
 = \lim _{ T \to \infty } \frac{ \Expect{ \Omega _0 , T\phi ^{(1)} _\rm{int} (z_1 , \xx _1 ) \dots \phi ^{(m)} _\rm{int} (z_m , \xx _m )
\exp \left(-i\int_{\Gamma_T^\e}d\zeta\, H_\rm{int} (\zeta)\right) \Omega _0 } }{\Expect{\Omega _0 , 
T\exp \left(-i\int_{\Gamma_T^\e}d\zeta\, H_\rm{int} (\zeta)\right)\Omega _0}  }. \label{QED-GMLeq}
\end{align}
\end{Thm}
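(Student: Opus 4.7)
The plan is to reduce the theorem to a direct application of the abstract Gell-Mann--Low formula (Theorem \ref{GML}) with the identifications $H_0 = H_\rm{fr}$, $H_1 = H_\rm{int}$, and $A_k = \phi^{(k)}(0,\xx_k)$. The bulk of the work has been carried out by the preparatory lemmas of this section, so the proof mainly consists of verifying, one by one, the hypotheses of Theorem \ref{GML} for the QED data and then matching the form of the Green's function.

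First I would verify Assumption \ref{Ass3}. Part (I) is immediate from the Fock-space construction: $H_\rm{fr} = \dd\Gamma_\ff(E_M) \otimes I + I \otimes \dd\Gamma_\bb(\omega)$ has the product vacuum $\Omega_0 = \Omega_\ff \otimes \Omega_\bb$ as its unique ground state with eigenvalue $0$, since $E_M(\pp) \ge M > 0$ and $\omega(\kk) > 0$ a.e. Parts (II) and (III), the existence and uniqueness of a ground state $\Omega$ of $H_\rm{tot}$ and the non-orthogonality $\Expect{\Omega, \Omega_0} \neq 0$, are precisely Hypothesis \ref{GS-Hypo}. The symmetry of $H_\rm{int}$ required by Theorem \ref{GML} is Lemma \ref{s.a.Lem} (iii).

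Next I would verify the two hypotheses on the operators $A_k$ demanded by Theorem \ref{GML}. Condition (I), that each $A_k = \phi^{(k)}(0,\xx_k)$ belongs to $\mathcal{C}_0$-class with respect to $H_\rm{fr}$, is exactly the content of Lemma \ref{field-spec} (iii) and (iv). For condition (II) I would appeal to Lemma \ref{preD2}: when $\phi^{(k)} = I \otimes A_j(0,\xx)$ the conclusion of Lemma \ref{preD2} (i) shows that we may take $r_k = 1$, whereas for $\phi^{(k)} = \psi_l(0,\xx) \otimes I$ or $\psi_l(0,\xx)^* \otimes I$ Lemma \ref{preD2} (ii) gives the invariance of each $D(H_\rm{tot}^n)$, so $r_k = 0$ suffices. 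Note that these hypotheses also guarantee that $G_m(z_1,\dots,z_m)$ is well defined: since $\Omega \in \bigcap_{n \in \N} D(H_\rm{tot}^n)$ as an eigenvector, and each $e^{-i(z_k - z_{k+1})H_\rm{tot}}$ (which is bounded because $\im(z_k - z_{k+1}) \le 0$) preserves every such domain, an induction from right to left shows the whole string of operators acts meaningfully.

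Finally I would translate the form of the Green's function between the abstract and QED settings. By Theorem \ref{explicitThm}, since $H_\rm{int}$ is symmetric and of $\mathcal{C}_0$-class, one has $\overline{W(z)} = e^{-izH_\rm{tot}}$ for every $z \in \C$ with $\im z \le 0$; hence under the ordering $\im z_1 \le \dots \le \im z_m$ one has $W(z_k - z_{k+1}) = e^{-i(z_k - z_{k+1})H_\rm{tot}}$, which identifies the abstract object
\begin{align}
\Expect{\Omega, A_1 W(z_1-z_2) A_2 \cdots A_{m-1} W(z_{m-1}-z_m) A_m \Omega}
\end{align}
with the QED Green's function in the theorem statement. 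Invoking Theorem \ref{GML} with the curve $\Gamma_T^\e$ then yields \eqref{QED-GMLeq}. There is no substantial obstacle remaining at this stage: the real technical difficulties---constructing the complex-time-ordered exponential in the unbounded setting, justifying the interchange of limits in the adiabatic procedure, and controlling the action of field operators on powers of $H_\rm{tot}$---have all been absorbed into Theorem \ref{GML} and into the $\mathcal{C}_0$-class machinery of Lemmas \ref{prelimLem1}--\ref{preD2}, leaving only a bookkeeping step.
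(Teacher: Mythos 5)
Your proof is correct and follows essentially the same route as the paper: identify $H_0 = H_\rm{fr}$, $H_1 = H_\rm{int}$, $A_k = \phi^{(k)}(0,\xx_k)$, verify Assumption \ref{Ass3} (I) from the free Fock vacuum and (II)--(III) from Hypothesis \ref{GS-Hypo}, obtain the $\mathcal{C}_0$-class property from Lemma \ref{field-spec}, and obtain condition (II) of Theorem \ref{GML} from Lemma \ref{preD2}. Your version is somewhat more explicit than the paper's (the concrete choices $r_k = 1$ or $r_k = 0$, the remark on well-definedness of $G_m$, and the translation between $W(z)$ and $e^{-izH_\rm{tot}}$ via Theorem \ref{explicitThm}), but these are unpackings of what the paper compresses into a single sentence, not a different argument.
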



\begin{proof} We have only to see that the conditions of Theorem \ref{GML} hold when $ H_0 = H_\rm{fr} $, $ H_1 = H_\rm{int} $, $ H = H_\rm{tot} $ and $ A_k = \phi ^{(k)} (0, \xx _k ) $. 

As is well known, $ H_\rm{fr} $ has a unique ground state $ \Omega _0 $, and the corresponding eigenvalue is zero. Thus, Assumption \ref{Ass3} (I) holds. Assumption \ref{Ass3} (II) and (III) follow from Hypothesis \ref{GS-Hypo}.

From Lemma \ref{field-spec} (iii) and (iv), each $ \phi ^{(k)} (0, \xx _k ) $ is in $ \mathcal{C}_0 $-class. The remaining assumptions follow from Lemma \ref{preD2}. Therefore, the desired result follows.
\end{proof}

\begin{Rem}\normalfont The above formula \eqref{QED-GMLeq} is more general than the Gell-Mann -- Low formula 
discussed in physics literatures. To obtain the original Gell-Mann -- Low formula, we regard the arguments $ z_k \in \C \; (k=1,...,m) $ as the time parameters which are usually real numbers, $ z_k \in \R $. Then, these are 
naturally time-ordered in $ \R $ whenever these are different from each other. Therefore, to derive the original formula,
choose a simple curve $ \Ga _T ^\e $ from $ -T (1-i\e ) $ to $ T (1-i\e ) $
in such a way that this natural time-ordering coincides our time-ordering defined above. For instance, take a polyline that
passes $ -T (1-i\e ) $, $t_{\rm{min}}$, $t_{\rm{max}}$, and $ T (1-i\e ) $ in this order, where 
\[ t_{\rm{min}}=\min\{t_1,\dots,t_m\},\q t_{\rm{max}}=\max\{t_1,\dots,t_m\}. \]
\end{Rem}

\section*{Acknowledgements}
The authors are grateful to Professor Asao Arai for continuous encouragements, comments,
discussions, and critical reading of the manusprict. 
They also thank Dr. Kazuyuki Wada and Dr. Daiju Funakawa for discussions and comments.

\bibliographystyle{habbrv}

\bibliography{ref}

\end{document}